\documentclass[11pt,a4paper]{article}
\usepackage{amssymb,amsfonts,amsmath,amsthm, mathtools}
\usepackage[latin1]{inputenc}
\usepackage[english]{babel}
\usepackage[left=2.7cm,right=2.7cm,bottom=2.67cm,top=2.67cm]{geometry}
\usepackage{dsfont}
\usepackage{mathrsfs}
\usepackage{natbib}
\usepackage{xcolor}
\usepackage{graphicx}
\usepackage{multirow}
\usepackage{multicol}
\usepackage{placeins}
\usepackage[normalem]{ulem}

\usepackage[implicit=false]{hyperref}

\theoremstyle{plain}
\newtheorem{thm}{Theorem}
\newtheorem{prop}{Proposition}
\newtheorem{lemma}{Lemma}
\newtheorem{coro}{Corollary}
\renewcommand{\qed}{\hfill \mbox{\raggedright \rule{.07in}{.1in}}}

\DeclarePairedDelimiter\abs{\lvert}{\rvert}

\DeclarePairedDelimiter\parent{(}{)}
\DeclarePairedDelimiter\colc{[}{]}
\DeclarePairedDelimiter\chave{\{}{\}}

\graphicspath{{figs/}}
\theoremstyle{definition}
\newtheorem{remark}{Remark}
\newtheorem{df}{Definition}

\newcommand{\F}{\mathscr{F}}
\newcommand{\R}{\mathds{R}}
\newcommand{\N}{\mathds{N}}

\def\A{\mathcal{A}}

\renewcommand{\qed}{\hfill \mbox{\raggedright \rule{.07in}{.1in}}}

\newcommand{\var}{\mathrm{Var}}
\DeclareMathOperator*{\cov}{Cov}

\DeclareMathOperator*{\argmin}{arg\,min}

\newcommand{\oac}{o_{a.co.}}
\newcommand{\Oac}{O_{a.co.}}
\newcommand{\supp}{\mathrm{supp}}

\DeclareRobustCommand{\rchi}{{\mathpalette\irchi\relax}}
\newcommand{\irchi}[2]{\raisebox{\depth}{$#1\chi$}}

\DeclareRobustCommand{\rphi}{{\mathpalette\irphi\relax}}
\newcommand{\irphi}[2]{\raisebox{\depth}{$#1\phi$}}

\DeclareRobustCommand{\rgamma}{{\mathpalette\irgamma\relax}}
\newcommand{\irgamma}[2]{\raisebox{\depth}{$#1\gamma$}}

\long\def\sfootnote[#1]#2{\begingroup%
\def\thefootnote{\fnsymbol{footnote}}\footnote[#1]{#2}\endgroup}
\def\bfootnote{\xdef\@thefnmark{}\@footnotetext}

\begin{document}
\pagestyle{myheadings} 
\markboth{Strong consistency of the local linear estimator with dependent functional data}{Matsuoka and Torrent}

\thispagestyle{empty}
{\centering
\Large{\bf Strong consistency of the local linear estimator for a generalized regression function with dependent functional data.} \vspace{.5cm}\\
\normalsize{{\bf 
Danilo H. Matsuoka${}^{\mathrm{a,}}$\sfootnote[1]{Corresponding author. This Version: \today},\let\thefootnote\relax\footnote{\hskip-.3cm$\phantom{s}^\mathrm{a}$Research Group of Applied Microeconomics - Department of Economics, Federal University of Rio Grande.} Hudson da Silva Torrent${}^\mathrm{b}$\let\thefootnote\relax\footnote{\hskip-.3cm$\phantom{s}^\mathrm{b}$Mathematics and Statistics Institute - Universidade Federal do Rio Grande do Sul.
}
 \\
\let\thefootnote\relax\footnote{E-mails: danilomatsuoka@gmail.com (Matsuoka);  hudsontorrent@gmail.com (Torrent)}
\vskip.3cm
}
}
}
\begin{abstract}
In this study, we focus on a generalized nonparametric scalar-on-function regression model for heterogeneously distributed and strongly mixing  data. We provide almost complete convergence rates for the local linear estimator of the regression function. We show that, under our conditions, the pointwise and uniform convergence rates are the same on a compact set. On the other hand, when the data is dependent, it is proved that the convergence rate  can be slower than those  obtained for independent data. A simulation study shows the good performance and finite  sample properties of the
	functional local linear estimator (FLL) in comparison to the local constant estimator (FLC). In	addition, a one step ahead energy consumption forecasting exercise illustrates that the forecasts of the FLL estimator are significantly more accurate than those of the FLC.
\vspace{.2cm}\\
\noindent \textbf{Keywords:}  Almost complete convergence; Local linear estimator; Functional data; Mixing; Nonparametric regression; Asymptotic theory.\vspace{.2cm}\\
\noindent \textbf{MSC2020:} 62G20, 62G08, 	62R10.
\end{abstract}
\section{Introduction}
Popularized by \cite{ferraty}, the nonparametric approach in functional regression models has been studied intensively in the last years. To cite a few papers, the local constant estimator (also known as the \textit{Nadaraya-Watson}  estimator) or its variations have been employed to estimate the nonparametric regression function \citep{laib,ling,zhu_politis,kara,shang}, the conditional density \citep{ezzahrioui,liang,liang2} and the conditional distribution function \citep{horrigue}. 

In most situations, the model under investigation involves a scalar response and  a functional covariate. However, some works provided results for models where the response variable is also functional \citep{lian} or multivariate \citep{omar}. 

As in the finite dimensional setting, the Nadaraya-Watson estimator is a particular case of a wider class of kernel-based estimators called \textit{local polynomial regression estimators} \citep[see][]{wand}. The latter is constructed assuming that the regression function is locally well approximated by a polynomial of a given order $k\in\N$ whereas the former fixes $k=0$. The local linear estimator ($k=1$) becomes popular due to its desirable properties\footnote{It does not suffer from boundary bias and adapts to both random and fixed designs \citep[see][]{fan,wand}.} and its relative simplicity.  \cite{baillo}, \cite{berlinet} and \cite{barrientos} were the first to propose adaptations of the local linear ideas to functional data. It should be noted that the precursor work of \cite{barrientos} has influenced the development of several subsequent contributions, including the estimation of the conditional density \citep{demongeotdensity} and the conditional distribution function \citep{demongeotcdf,messaci}, the asymptotic normality for independent \citep{zhou_lin} and dependent \citep{xiong} data, the estimation for censored data  \citep{leulmi2}, an  estimation robust to outliers and heteroskedasticity \citep{belarbi} among others. 

We highlight the extension made by \cite{leulmi}  to provide strong convergence rates for strongly mixing functional data. We modify their set of assumptions in order to accommodate usual asymmetric kernel functions like the polynomial-type kernels (e.g., triangle, quadratic, cubic, and so on) and to allow for a more general dependence condition. With regard to the latter aspect, we weaken the conditions on the relation between joint probabilities and products of small ball probabilities  for strongly mixing data. Here, the data are allowed to be heterogeneously distributed.

The aim of this investigation is to study the almost complete convergence of the local linear estimator, pointwise and uniformly, for functional data under strong mixing dependence. As mentioned above, \cite{leulmi} has already investigated a similar problem. However, their asymptotics is developed slightly differently. 

The remainder of this paper is organized as follows. In Sec. \ref{sec2}, some preliminary definitions and notation are introduced. In Sec. \ref{sec3}, a list of assumptions \textcolor{purple}{is} given and the convergence rates of the local linear estimator are established. Sec. \ref{sec4} presents a simulation experiment, and Sec. \ref{sec5} complements the study with an application to energy consumption data. In Sec. \ref{sec6}, a global conclusion is given. The proofs of our main results and lemmas are presented in Appendices A and B, respectively.

\section{Model and estimation}\label{sec2}

To formulate the estimation problem, introduce $n$ random pairs  $(Y_i,\rchi_i)$, $i\in\{1,\dotsc,n\}$,  on $(\Omega,\A,P)$  taking values in $\R\times \F$, where $\F$ is some abstract semimetric space\footnote{In this work, a semimetric $d$ is defined as in  Definition 3.2 of \cite{ferraty}.   In some fields of Mathematics, especially in Topology, $d$ is better known as a \textit{pseudometric} \citep[see][]{kelley,howes}.} equipped with a semimetric $d$.  Furthermore, suppose that each pair $(Y_i,\rchi_i)$ follows the generalized regression model:
\begin{equation}\label{eq1}
	\varphi(Y_i)=m_\varphi(\rchi_i)+\epsilon_i, \quad i\in\N,
\end{equation}
where $m_\varphi:\F\to\R$ is called the \textit{regression function}, $\varphi:\R\to\R$ is a Borel function and the random error $\epsilon_i$ is such that $E(\epsilon_i)=0$ and is independent of $\rchi_j$ for all $i\neq j$. Note that $\{(Y_i,\rchi_i)\}_{i=1}^n$ is allowed to be dependent and heterogeneously distributed.

It is clear that $(1)$ is a generalization of the standard regression model in the extent that $\varphi$ can be set as the identity function (i.e. $\varphi(t)=t$). 

Indeed, the above generalized model encompasses a broad set of  nonparametric estimation problems. For example, the conditional  cumulative distribution function (c.d.f.) can be studied by setting $\varphi(t)=1_{(-\infty,y]}(t)$,  for any $y\in\R$, because then $m_\varphi(x)=P(Y\leq y\mid \rchi=x)$. Under some regularity conditions \cite[see][]{demongeotcdf},  if instead $\varphi(t)= H(\textcolor{purple}{(}y-t)/h_n)$ where $H$ is some c.d.f. and $h_n=o(1)$, then  $m_\varphi (x)\to P(Y\leq y\mid \rchi=x)$ as $n\to\infty$ . On the other hand, when one is interested in the conditional density  $f_{Y\mid \rchi}$ (assuming it exists and is smooth enough), the choice of $\varphi(t)=G((y-t)/h_n), y\in\R$, with $G$ being a kernel function implies that $m_\varphi (x)\to f_{Y\mid\rchi}(y\mid x)$ as $n\to\infty$  \citep[see][]{demongeotdensity}.

As proposed by \cite{barrientos}, a \textit{local linear estimator}  $\hat m_{\varphi}(x)$ for the regression function $m_\varphi(x)=E(\varphi(Y_i)\mid\rchi_i=x),\ x\in\F,$  can be defined as the solution $a$ of the following minimization problem
\begin{equation}\label{eq2}
	\min_{(a,b)\in\R^2}\sum_{i=1}^n [\varphi(Y_i)-a-b\beta(\rchi_i,x)]^2K(d(\rchi_i,x)/h),
\end{equation}
where $\beta:\F^2\to\R$ is a known function such that, $\forall x'\in\F, \ \beta(x',x')=0$,  $\{h\}\coloneqq \{h_n\}$ is a strictly positive sequence satisfying $h=o(1):nh\to\infty$, as $n\to\infty$, and the function $K:\R\to\R_+$ is a known asymmetrical kernel function with $\R_+$ denoting the set of nonnegative real numbers. It can be shown that \eqref{eq2} admits the explicit formula for $a$:
\begin{equation}\label{eq3}
	\hat m_\varphi (x)=\frac{\sum_{i,j=1}^n w_{i,j}(x) \varphi_j}{\sum_{i,j=1}^n w_{i,j}(x)},
\end{equation}
with 
\begin{equation*}
	w_{i,j}(x)=\beta_i(x)(\beta_i(x)-\beta_j(x))K_i(x)K_j(x),
\end{equation*}
where, by a slight abuse of notation, $K_i(x)=K(d(\rchi_i,x)/h)$, $\beta_i(x)=\beta(\rchi_i,x)$ and $\varphi_i=\varphi(Y_i)$.

The estimator in \eqref{eq3} is motivated by the assumption that  $a-b\beta(\cdot,x)$ is a good approximation of $m_\varphi(\cdot)$ around $x$. It implies that $a$ is approximately  $m_\varphi(x)$ as $\beta(x,x)=0$,  leading to the idea that $m_\varphi(x)$ could be reasonably estimated by $\hat m_\varphi (x)$. Clearly, the estimator is conceptually a local weighted least squares with kernel weights $K_i$. This approach is a natural extension to that of used in the traditional multivariate local polynomial regression\footnote{For further details, see Section 5.2 of \cite{wand} and Section 1.6 of \cite{tsybakov}.} where the regression function is approximated by its Taylor polynomial of some degree at $x$.

The functions $\beta(\cdot,\cdot)$ and $d(\cdot,\cdot)$  can be regarded as  locating functions as they locate one element of $\F$ with respect to another element in $\F$. While $\beta(\cdot,x)$ is determined by the hypothesis on how $a-b\beta(\cdot,x)$ fits the data $\{(Y_i,\rchi_i)\}$ near $x$, the semimetric $d(\cdot,x)$ is more related to the  topological structure of $\F$ which also affects the weighting scheme in \eqref{eq2}. Theoretically, the semimetric $d$ plays a central role in the quality of the convergence of kernel estimators since it controls the behavior of small ball probabilities around zero \citep[see Chapter 13 of][]{ferraty}.  The bandwith  $h$ can be regarded as a smoothing parameter, where larger values of $h$ tend to weight the observations more equally.

\section{Asymptotics}\label{sec3}

\subsection{Preliminaries}\label{sec3.1}

Some preliminary concepts are needed for our asymptotics. For  easy reference, consider the following definitions.

\begin{df}[Strong mixing]\label{d1}
	Let $\{X_i\}_{i\in\N}$ be a sequence of random variables and  let $\mathcal F_\ell^m=\sigma(X_i:\ell\leq i\leq m)$ be the sigma-algebra generated by $\{X_i\}_{i=\ell}^m$. The \textit{strong mixing coefficients} $\{\alpha(j)\}_{j\in\N}$ of $\{X_i\}_{i\in\N}$ are defined by 
	\begin{equation*}
		\alpha(j)=\sup_{k\in\N}\{\abs{P(A\cap B)-P(A)P(B)}:A\in\mathcal F_{1}^k, B\in\mathcal F_{k+j}^\infty\}, \quad j\in\N.
	\end{equation*}
	The sequence $\{X_i\}_{i\in\N}$ is said to be \textit{strongly mixing} (or $\alpha$-mixing) if $\lim_{j\to\infty} \alpha(j)=0$.
\end{df}

\begin{df}[Asymptotic orders]\label{d2}
	Let $\{X_i\}_{i\in\N}$  and $\{a_i\}_{i\in\N}$ be a sequence of random variables and a sequence of real numbers, respectively. 
	\begin{enumerate}
		\item[(a)]  $\{X_i\}_{i\in\N}$ is said to be of \textit{order almost completely smaller} than $\{a_i\}_{i\in\N^*}$ if, and only if,
		\begin{equation*}
			\forall \epsilon>0: \sum_{i\in\N} P(|X_i/a_i|>\epsilon)<\infty,
		\end{equation*}
		and we write $X_n=\oac(a_n)$. In particular, if $X_n=Z_n-Z=\oac(1)$, then we say that $\{Z_n\}_{n\in\N}$ \textit{converges almost completely} to the random variable $Z$. 
		
		\item[(b)] $\{X_i\}_{i\in\N}$ is said to be of \textit{order almost completely less than or equal to} that of $\{a_i\}_{i\in\N}$ if, and only if,
		\begin{equation*}
			\exists \epsilon>0: \sum_{i\in\N} P(|X_i/a_i|>\epsilon)<\infty,
		\end{equation*}
		and we write $X_n=\Oac (a_n)$.
		
		\item[(c)] We say that $a_n=\Theta(b_n)$, with $\{b_n\}_{n\in\N}$ being a sequence of real numbers, if there are $C_1,C_2>0$ such that $C_1 \leq \abs{a_n/b_n}\leq C_2$ for all $n$ sufficiently large.
	\end{enumerate} 
\end{df}

The asymptotic orders  defined above are consistent with the asymptotic notations commonly found in the literature\footnote{For comparison, see the Sections 1.4 and 2.1 of \cite{lehmann} .}. It can be seen that the almost complete convergence is a mode of strong convergence in the sense that $X_n=\oac(1)$ implies  $P(\limsup_{n\to\infty}\{\abs{X_n}>\epsilon\})=0$ for any $\epsilon>0$, by the Borel-Cantelli's lemma. In words, when $\{X_n\}_{n\in\N}$ converges almost completely, it also converges almost surely.

Now, we introduce some useful notations. 
Let $x\in\F$ be fixed and denote by $B(x,r)=\{x'\in\F: d(x,x')\leq r\}$ a closed ball of center $x$ and radius $r>0$ and by $P_{i,j}$  the pushforward measure induced by a random pair $(\rchi_i,\rchi_j)$. 

Let $[t]$ denote the set $\{1,\dotsc,t\}, \forall t\in\N$,  and $\R^*_+$ be the set of strictly positive real numbers. Define, for any  $m\in\N$ and $i\in [n]$, the operator $\rgamma_{m,i}:\F\to\R^*_+$ by $x\mapsto E(\abs{\varphi(Y_i)}^m\mid \rchi_i=x),$
and  define, $\forall i,j\in[n], \forall r_1,r_2,r_3,r_4>0$ and $\forall x'\in\F$,
\begin{align*}
	&\rphi_{x,i}(r_1,r_2)=P(r_1\leq d(x,\rchi_i)\leq r_2),\\
	&\Psi_{x,x',i,j}(r_1,r_2,r_3,r_4)=P(r_1\leq d(x,\rchi_i)\leq r_2,r_3\leq d(x',\rchi_j)\leq r_4).
\end{align*}
Whenever there is no risk of confusion, we use the notations $\rphi_{x,i}(r_1)\coloneqq \rphi_{x,i}(0,r_1)$,  $\rphi_{x}(r_1)\coloneqq\max_{s\in[n]}\rphi_{x,s}(r_1)$, $\Psi_{x,x,i,j}(r_1,r_2,r_3,r_4)\coloneqq \Psi_{x,i,j}(r_1,r_2,r_3,r_4)$ and $\Psi_{x,i,j}(r_1)\coloneqq \Psi_{x,i,j}(0,r_1,0,r_1)$.

In what follows, denote by $C$ and $c$, respectively, a generic large and a generic small positive constants that may take different values at different appearances.\footnote{Since the constants $0<C<\infty$, possibly distinct from each other, which appear in the text form a finite set, we are implicitly taking the greatest value among them. Likewise, we are implicitly taking the smallest value among the constants $0<c<\infty$.}

\subsection{Pointwise consistency}\label{sec3.2}

In this section, we provide convergence rates for the local linear estimator defined in \eqref{eq3}, pointwisely on $x\in\F$. The data is assumed to be strongly mixing with arithmetic mixing rates,  which is a standard choice in many regression frameworks \citep{hansen,leulmi,ferraty3}. It is worth noting that the data is allowed to be heterogeneously distributed. The asymptotic theory used to establish the almost sure convergence is based on the following set of assumptions:

\paragraph{Assumptions}
\vspace{1em}
The following assumptions are made throughout this section:
\vspace{.5em}
\begin{enumerate}
	\item[\textbf{A1}.] \label{a1} For all $h>0$ and $i\in[n]$, $ \rphi_{x,i}(h)>0$.
	\vspace{.5em}
	\item [\textbf{A2}.] There exist $0<b,C_2<\infty$ such that $\abs{m_\varphi(x_1)-m_\varphi(x_2)}\leq C_2 [d(x_1,x_2)]^b$ for every  $x_1,x_2\in B(x,h)$.
	\vspace{.5em}
	
	\item [\textbf{A3}.] There exist $0<c_3\leq C_3 <\infty$ such that $ c_3 d(x,x')\leq \abs{\beta(x,x')}\leq C_3 d(x,x')$ for all $x'\in\F$.
	
	\vspace{.5em}
	
	\item [\textbf{A4}.] For all $m\in\N$ and $i\in[n]$, the operator $\gamma_{m,i}$ is continuous at $x$. Moreover, there exist positive constants $c_4, C_4<\infty$ such that  $c_4<\min_{s\in[n]}\gamma_{1,s}(x)$, $\max_{s\in[n]}\gamma_{m,s}(x)<C_4, \forall m\geq 2,$ and $\sup_{i\neq j}  E(\abs{\varphi(Y_i)\varphi(Y_j)}\mid (\rchi_i,\rchi_j))\leq C_4$.
	
	\vspace{.5em}
	
	\item [\textbf{A5}.] The kernel function $K:\R\to\R_+$ is such that $\int_0^1 K(u)du=1$, its derivative $K'$ exists on $[0,1]$ and:
	\begin{enumerate}
		\item[(I)] $\exists 0<c_5\leq C_5< \infty: c_5 1_{[0,1]}\leq K \leq C_5 1_{[0,1]}$; or
		\item[(II)] $K(1)=0$, $\supp K=[0,1)$ and $-C'_5\leq K'\leq -c'_5$, for some $0<c'_5\leq C'_5$. 
	\end{enumerate}
	Whenever (II) holds, it is additionally required that $\exists c_0>0,\epsilon_0<1,n_0\in\N:\forall i\in[n]:\forall n>n_0: \int_0^{\epsilon_0}\rphi_{x,i}(uh)du>c_0 \rphi_{x,i}(h)$. 
	
	\vspace{.5em}
	
	\item [\textbf{A6}.] 
 \begin{enumerate}
 \item[(i)] There exist $c_6>0$ and $\epsilon^*<1$ such that
 $\rphi_{x,i}^{-1}(h)\int_0^{\epsilon^*} \rphi_{x,i}(zh,\epsilon h)\frac{d}{dz}(z^lK(z))dz>c_6$, for all $i,j\in[n], l\in\{2,4\}$ and $n$ sufficiently large ;
 \item[(ii)] $\forall i,j\in[n]:  h^{2}\iint_{B(x,h)^2}\beta(u,x)\beta(v,x)dP_{i,j}(u,v)=o\parent[\Big]{\iint_{B(x,h)^2}\beta(u,x)^2\beta(v,x)^2dP_{i,j}(u,v)}$.
  \end{enumerate}
	\vspace{.5em}
	
	\item [\textbf{A7}.] $\max_{s\in[n]}\rphi_{x,s}(h)=O(\min_{s\in[n]}\rphi_{x,s}(h))$.
	\vspace{.5em}
	
	\item [\textbf{A8}.]  The sequence $(Y_i,\rchi_i)_{i\in\N}$ is arithmetically strongly mixing  with rate $a>3$, i.e., $\exists  \delta, C_8:\forall n\in\N: \alpha(n)\leq C_8 n^{-(3+\delta)}$. Moreover, $\exists 0<\Delta<\min(a+1,\delta):\rphi_x(h)^{2(a+1)}\geq (\ln n)^{3(a+1)} n^{-\Delta}$;
    
	\vspace{.5em}
	\item[\textbf{A9}.] $\exists C_9,c_9>0:\forall i,j\in[n]: \exists 1/4<p_{1,i,j}\leq 1/2\leq p_{2,i,j}< 3/4$ such that 
	\begin{equation*}
		c_9[\rphi_{x,i}(h)\rphi_{x,j}(h)]^{1/2+p_{2,i,j}}\leq \Psi_{x,i,j}(h)\leq C_9 [\rphi_{x,i}(h)\rphi_{x,j}(h)]^{1/2+p_{1,i,j}}.
	\end{equation*}
	
	\vspace{.5em}
	
	\item[\textbf{A10}.] There is $c_{10}>0$ such that $\forall i,j\in[n]$ and $n$ large enough, if $K$ is of type (I) in \textbf{A5}, then
	\begin{equation*}
		\frac{1}{\Psi_{x,i,j}(h)}\int_0^{1} \Psi_{x,i,j}(zh,h,0,h)\frac{d}{dz}(z^2K(z))dz>c_{10},
	\end{equation*}
	and, additiionally, if $K$ is of type (II) in \textbf{A5}, it holds that
	\begin{equation*}
		-\frac{1}{\Psi_{x,i,j}(h)}\iint_0^{1} \Psi_{x,i,j}(zh,h,0,wh)\frac{d}{dz}(z^2K(z))K'(w)dzdw>c_{10}.
	\end{equation*}
\end{enumerate}
\vspace{0.5em}

Assumptions \textbf{A1}-\textbf{A4} are standard in the literature \citep[see][]{barrientos,ferraty,ferraty2,leulmi}. \textbf{A1} requires that the probability of observing each random variable $\rchi_i$ around $x$ is nonzero and \textbf{A2} assumes that $m_\varphi$ is $b$-H\"{o}lder continuous which will determine the bias order of our convergence problem as can be seen in Proposition \ref{p1}. In \textbf{A4}, the uniform bounds on $E(\abs{\varphi(Y_i)\varphi(Y_j)}\mid (X_i,X_j))$ and on $\gamma_{m,i}(x)$ provide a means to cope with the dependence of data. 

The set of kernel functions satisfying \textbf{A5} includes common choices such as the triangle, quadratic, cubic and uniform asymmetric kernels.\footnote{See the Definition 4.1, page 42, of \cite{ferraty}.} It is worth mentioning that our framework can be easily adapted to a more general support of form $\supp K=[0,L]$, for $ L>0$. For the sake of simplicity, we fixed $L=1$.  \textbf{A6} strengthens the assumptions (H6) and (H7) of \cite{barrientos}, originally made for independent data. \textbf{A6}(ii), which is included in assumption (H7) of \cite{leulmi},  specifies the local behavior of $\beta$ and \textbf{A6}(i) specifies the behavior of $h$ with respect to the small ball probabilities and the kernel function $K$. 

Since $\min_{s\in[n]}\rphi_{x,s}(h)\leq \rphi_{x,i}(h)\leq \max_{s\in[n]}\rphi_{x,s}(h)$ for all $i\in[n]$, the assumption that  $\max_{s\in[n]}\rphi_{x,s}(h)=O(\min_{s\in[n]}\rphi_{x,s}(h))$ in \textbf{A7}  implies that all  $\rphi_{x,i}(h), i\in[n],$ share the same asymptotic rate as $n\to\infty$. However, unlike the case of equally distributed data, we do not assume that $\rphi_{x,i}(h)=\rphi_{x,j}(h), i\neq j$.

The requirement that $\rphi_x(h)^{2(a+1)}\geq (\ln n)^{3(a+1)} n^{-\Delta}$ with $\Delta<a+1$, in \textbf{A8}, implies  that $\ln n/(n\rphi_{x}(h)^2)=o(1)$, and hence, that 
$\ln n/(n\rphi_{x}(h)^{4p_\text{max}-1})=o(1)$ if the number $p_\text{max}$ were less than $3/4$. This condition is crucial to ensure the consistency of $\hat m_{\varphi}$. It is a strengthening of the conventional assumption that $\ln n/(n\rphi_{x}(h))=o(1)$.\footnote{Since $(\ln n)^3\geq \ln n$ and $c\in (0,1)$, $\frac{\rphi_x(h)^2n}{\ln n}\geq n^{1-\Delta/(a+1)}\to\infty$ and so, $\frac{\ln n}{\rphi_x(h)^2n}\to0$ as $n\to\infty$. This, in turn, implies $\ln n/(n\rphi_{x}(h))\to 0$ as $\rphi_x(h)\geq \rphi_x(h)^2$ for $n$ large enough.}

In assuming that our process $(Y_i,\rchi_i)_{i\in\N}$ is strongly mixing, we are implicitly restricting the relation between the joint probability $\Psi_{x,i,j}(h)$ and the product of small ball probabilities $\rphi_{x,i}\rphi_{x,j}$ for long lag lengths (i.e., when $\abs{i-j}$ is relatively ``large'').\footnote{For more details, see Proposition 3 of the Supplementary Material.} This restriction is consistent with the definition of mixing, regarded as a notion of asymptotic independence. Proposition 3 of the Supplementary Material shows that if $1<n\min_{s\in[n]}\rphi_{x,s}(h)$, there will be indices $i,j\in[n]$ such that $\Psi_{x,i,j}(h)=\Theta(\rphi_{x,i}(h)\rphi_{x,j}(h))$.\footnote{Note that $1<n\min_{s\in[n]}\rphi_{x,s}(h)$ always holds if $\ln n/(n\min_{s\in[n]}\rphi_{x,s}(h))=o(1)$ and $n$ is sufficiently large.} For equally distributed and strong mixing data, \cite{leulmi} imposed that $C'\rphi_{x,1}(h)^{1+d}<\Psi_{x,i,j}(h)\leq C\rphi_{x,1}(h)^{1+d}$ for some $C',C>0$, some $d\in(0,1]$, and any $i,j\in[n]$.\footnote{Assumptions (H5a) and (H5b) of \cite{leulmi}, respectively} This situation,  however, is possible in their framework only if $d=1$, since $\Psi_{x,i,j}(h)$ cannot be $\Theta\parent[\big]{\rphi_{x,1}(h)^{1+d}}$ and $\Theta\parent[\big]{\rphi_{x,1}(h)^2}$ simultaneously, for $d\neq 1 $. In other words, the only possible uniform bounds for their setup would be  $C'\rphi_{x,1}(h)^{2}<\Psi_{x,i,j}(h)\leq C\rphi_{x,1}(h)^{2}$, or more generally, $\Psi_{x,i,j}(h)=\Theta(\rphi_{x,1}(h)^{2})$. 

In view of the above consideration, one can gain flexibility if instead \textbf{A9} were chosen. In this way, we are allowing $\Psi_{x,i,j}(h)$ to have distinct asymptotic orders along the pairs $(i,j)$ as $n\to\infty$.

\textbf{A10} is a technical assumption used for providing lower bounds for the expectation of local linear weights.\footnote{A similar assumption can be found in hypothesis (H7) of \cite{leulmi}.}  Similar to \textbf{A6}(i), \textbf{A10} specifies the local behavior of $h$ with respect to joint probabilities and the kernel function $K$. Proposition 4 in the Supplementary Material explores \textbf{A6}(i) and \textbf{A10} and shows that they hold for general processes of fractal order when the polynomial or uniform-type kernel functions are used.

We now state the almost complete convergence rate of $\hat m_\varphi(x)$. Let $p_{\max}=\max_{(i,j)\in[n]^2}p_{2,i,j}$ where $p_{2,i,j}$ is specified in \textbf{A9}.
\begin{thm}\label{teo1}
	Suppose that assumptions \textbf{A1}-\textbf{A10} are fullfiled. Then 
	\begin{equation}\label{eq4_}
		\hat m_\varphi (x)-m_\varphi(x)=O(h^b)+\Oac\parent[\bigg]{\sqrt{\frac{\ln n}{n\rphi_x(h)^{4p_\text{max}-1}}}}.
	\end{equation}
\end{thm}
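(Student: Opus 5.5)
We follow the standard decomposition used for local linear estimators with functional data \citep{barrientos,leulmi}. Normalize the weights by the deterministic quantity
\begin{equation*}
 \kappa_n=\frac{1}{n(n-1)}\sum_{i\neq j}E\colc[\big]{w_{i,j}(x)},
\end{equation*}
and write $\hat m_\varphi(x)=\hat m_2(x)/\hat m_1(x)$ with $\hat m_1(x)=\sum_{i\neq j}w_{i,j}(x)/(n(n-1)\kappa_n)$ and $\hat m_2(x)=\sum_{i\neq j}w_{i,j}(x)\varphi_j/(n(n-1)\kappa_n)$. Diagonal terms vanish because $w_{i,i}=0$. This gives
\begin{equation*}
 \hat m_\varphi(x)-m_\varphi(x)=\frac{1}{\hat m_1(x)}\Big\{[\hat m_2(x)-E\hat m_2(x)]-[m_\varphi(x)-E\hat m_2(x)]-m_\varphi(x)[\hat m_1(x)-1]\Big\}.
\end{equation*}
The theorem then follows from three statements:
\begin{enumerate}
\item[(a)] a bias bound $m_\varphi(x)-E\hat m_2(x)=O(h^b)$;
\item[(b)] dispersion bounds $\hat m_l(x)-E\hat m_l(x)=\Oac\parent[\big]{\sqrt{\ln n/(n\rphi_x(h)^{4p_{\max}-1})}}$ for $l=1,2$;
\item[(c)] $\hat m_1(x)$ bounded away from zero, which follows from (b) with $l=1$ and \textbf{A8}, since $\sum_n P(\hat m_1(x)<1/2)<\infty$.
\end{enumerate}

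For the bias (a), condition on $(\rchi_i,\rchi_j)$. Since $\epsilon_j$ is independent of $\rchi_i$ for $i\neq j$, we get $E(w_{i,j}\varphi_j)=E(w_{i,j}m_\varphi(\rchi_j))$. By \textbf{A5} the kernel is supported on $[0,1]$, so $\rchi_j\in B(x,h)$, and \textbf{A2} then gives $|m_\varphi(\rchi_j)-m_\varphi(x)|\le C_2h^b$. This yields $|E\hat m_2-m_\varphi(x)|\le C h^b\,E|\hat m_1|$-type bounds. The remaining task is to show $\sum E|w_{i,j}|=O(\sum E w_{i,j})$. For this we need the orders of the moments: the upper bounds come from \textbf{A3}, \textbf{A5}, \textbf{A9}, and the lower bounds come from \textbf{A6}(i), \textbf{A10} via the integration-by-parts identity $E[K_i d(x,\rchi_i)^l]=h^l\int\rphi_{x,i}(zh,h)\frac{d}{dz}(z^lK(z))dz$ (type (I)/(II) separately). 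We expect
\begin{equation*}
 E w_{i,j}=\Theta\parent[\big]{h^4\Psi_{x,i,j}(h)},
\end{equation*}
where \textbf{A6}(ii) makes the cross term $E\beta_iK_i\beta_jK_j$ negligible. With \textbf{A9} and \textbf{A7}, this gives $\kappa_n\ge c h^4\rphi_x(h)^{1+2p_{\max}}$.

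The dispersion step (b) is the main obstacle. We would write $\hat m_l-E\hat m_l$ as a degenerate-free decomposition of a U-statistic-like double sum, splitting $w_{i,j}=\beta_i^2K_i\,K_j-\beta_iK_i\,\beta_jK_j$. Each piece becomes a product of simple sums such as $\frac1n\sum_i\beta_i^2K_i/h^2$, $\frac1n\sum_jK_j\varphi_j$ and $\frac1n\sum_i\beta_iK_i/h$. Each centered simple sum is then handled by a Fuk--Nagaev exponential inequality for $\alpha$-mixing sequences (as in \cite{ferraty}, used by \cite{leulmi}). The variance of such a sum is controlled by
\begin{equation*}
 s_n^2=\sum_{i,j}|\cov(Z_i,Z_j)|.
\end{equation*}
We split the lags at $u_n$. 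For near-diagonal lags, \textbf{A9} and \textbf{A4} bound the covariance by $C\Psi_{x,i,j}(h)\le C\rphi_x(h)^{1+2p_{1,i,j}}$, plus $\rphi_x(h)^2$. For far lags, Davydov's inequality is combined with \textbf{A8}. Choosing $u_n$ as a power of $\rphi_x(h)^{-1}$ balances the two pieces and gives $s_n^2=O(n\rphi_x(h))$. The Fuk--Nagaev inequality with $\varepsilon=\eta\sqrt{n\rphi_x(h)\ln n}$ and $r=(\ln n)^2$ then gives summable tail probabilities exactly when \textbf{A8}'s condition $\rphi_x(h)^{2(a+1)}\ge(\ln n)^{3(a+1)}n^{-\Delta}$ with $\Delta<\delta$ holds. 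Each simple sum is therefore $\Oac(\rphi_x(h)^{1/2}\sqrt{\ln n/n})$ around its mean, and its mean is $O(\rphi_x(h))$.

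To assemble (b), expand the products: each cross-deviation term is $\Oac(\rphi_x(h)\cdot\rphi_x(h)^{1/2}\sqrt{\ln n/n})$. Dividing by $\kappa_n/h^4\ge c\rphi_x(h)^{1+2p_{\max}}$ gives $\Oac\parent[\big]{\sqrt{\ln n/(n\rphi_x(h)^{4p_{\max}-1})}}$. The difference between the product of means and the $i\neq j$ expectation is $O(1/n)$ relative to this and hence negligible. The delicate points are the type (II) lower bounds and making sure the product-of-means versus joint-expectation discrepancy, which mixing does not cancel, is absorbed in the $\kappa_n$ normalization. This is exactly where the exponent $4p_{\max}-1$, rather than $1$, enters.
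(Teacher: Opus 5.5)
This is essentially the paper's proof. Your $\hat m_1,\hat m_2$ are its $m_0,m_1$. Your bias step is Proposition~1: condition on $(\rchi_i,\rchi_j)$, then bound $\sum_{i\neq j}E\abs{w_{i,j}}$ by a constant times $\sum_{i\neq j}Ew_{i,j}$. Your dispersion step is Proposition~3: factorize into simple sums, split the covariance sum by lag (\textbf{A9} near the diagonal, Davydov plus \textbf{A8} far away), and apply Fuk--Nagaev with $\lambda\propto\sqrt{\ln n/(n\rphi_x(h))}$ and $v\propto(\ln n)^2$. One small difference is in your favour. You use only the upper bound $s_n^2=O(n\rphi_x(h))$ in Fuk--Nagaev, and this suffices because the first term is decreasing in $1/s_n^2$ (equivalently, $(1-t)^{v/2}\le e^{-tv/2}$). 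The paper instead proves the two-sided Lemma~3, using \textbf{A6}(i), only to justify a Taylor expansion of $\ln(1-t_n)$.

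There is a bookkeeping error you must fix. $Ew_{i,j}$ is $\Theta(h^2\Psi_{x,i,j}(h))$, not $\Theta(h^4\Psi_{x,i,j}(h))$:
\begin{itemize}
\item \textbf{A3} and \textbf{A5} give $E\abs{w_{i,j}}\le Ch^2\Psi_{x,i,j}(h)$.
\item Lemma~1(iii) (from \textbf{A10}) gives $E(K_iK_j\beta_i^2)>c^*h^2\Psi_{x,i,j}(h)$, and \textbf{A6}(ii) makes $E(K_iK_j\beta_i\beta_j)=o(h^2\Psi_{x,i,j}(h))$. Together these give $Ew_{i,j}\ge ch^2\Psi_{x,i,j}(h)$.
\end{itemize}
Hence $\kappa_n\ge ch^2\rphi_x(h)^{1+2p_{\max}}$. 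Your factorized numerator is $h^2$ times products of sums normalized by $h^2$ and $h$, so you must divide by $\kappa_n/h^2$. Dividing by $\kappa_n/h^4$, as written, leaves a spurious factor $h^{-2}$.

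Your treatment of the gap between $E(AB)$ and $EA\,EB$ for two simple sums $A,B$ is also off. This gap is the deterministic term $\cov(A,B)=n^{-2}\sum_{i,j}\cov(\cdot,\cdot)$. It is not absorbed by the normalization, and it is not $O(1/n)$ pair by pair. What controls it is the same lag-split covariance estimate, which gives $\cov(A,B)=O(\rphi_x(h)/n)=o\bigl(\rphi_x(h)^{3/2}\sqrt{\ln n/n}\bigr)$; this is the paper's item (d). The exponent $4p_{\max}-1$ comes from a separate source. It arises from comparing the termwise lower bound $\kappa_n\ge ch^2\rphi_x(h)^{1+2p_{\max}}$ with the $h^2\rphi_x(h)^2$ scale of the products of means, i.e.\ the paper's $Q=O(\rphi_x(h)^{1-2p_{\max}})$.
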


Theorem \ref{teo1} shows that the heterogeneity and dependence of the data do not affect the deterministic, or the bias, part of the estimator $\hat m_{\varphi}(x)$ (for comparison, see Theorem 4.2 of \cite{barrientos}). Indeed, it only depends on the H\"older continuity order of the regression function $m_\varphi$. On the other hand, one can see that the convergence of the stochastic part can be slowered by the data dependence. Unlike the case of local constant estimator, here we have to deal with the joint probability $\Psi_{x,i,j}$ when providing  a lower bound for the expectation of the local linear weights. In its turn,  $\Psi_{x,i,j}$ is affected not only by the topological structure of $(\F,d)$, but also by its relation with $\rphi_{x,i}$ and $\rphi_{x,j}$ (i.e., by the dependence structure). The larger the exponent $p_{2,i,j}$ associated to the joint probability $\Psi_{x,i,j}$ according to  \textbf{A9}, the slower the convergence of the estimator. The reason is as follows: a large value of $p_{2,i,j}$ means that the joint probability of observing $\rchi_i$ and $\rchi_j$ rapidly decreases to zero as $n\to\infty$, which indicates that the data are overdispersed, leading to a less efficient convergence.

Since geometric mixing rates\footnote{We say that a random sequence $\{X_i\}_{i\in\N}$ is \textit{geometrically strongly mixing} if  its mixing coefficients satisfy $\alpha(k)\leq t^k, k\in\N,$ for some $t\in(0,1)$.} imply arithmetic mixing rates for any decay parameter $a>0$,\footnote{Indeed, if there exists $t\in(0,1)$ such that $\alpha(k)\leq Ct^k$, then  $\alpha(k)\leq Ck^{-a}$ for all $a>0$, since $t^k k^a\to 0$ as $k\to +\infty$.} Theorem \ref{teo1} also applies for geometrically $\alpha$-mixing data. 

It is known that the almost complete convergence implies  the convergence in probability. The next result provides convergence rates in probability under slightly weaker conditions. 
\begin{coro}\label{coro1}
	Under the conditions of Theorem 1, except A6(i), it holds that
	\begin{equation}
		\hat m_\varphi (x)-m_\varphi(x)=O(h^b)+O_p\parent[\bigg]{\sqrt{\frac{\ln n}{n\rphi_x(h)^{4p_\text{max}-1}}}}.
	\end{equation}
\end{coro}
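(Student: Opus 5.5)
The plan is to reuse the decomposition behind Theorem \ref{teo1} and weaken only the step where \textbf{A6}(i) enters. Write
\begin{equation*}
\hat m_\varphi(x)=\frac{\hat m_{\varphi,2}(x)}{\hat m_{\varphi,1}(x)},\qquad
\hat m_{\varphi,l}(x)=\frac{1}{n(n-1)E(w_{1,2}(x))}\sum_{i\neq j}w_{i,j}(x)\varphi_j^{\,l-1},\quad l\in\{1,2\}.
\end{equation*}
Here the normalisation by $E(w_{1,2}(x))$ is replaced, in the heterogeneous case, by the average of $E(w_{i,j}(x))$ over $i\neq j$. This gives the usual identity
\begin{equation*}
\hat m_\varphi(x)-m_\varphi(x)=\frac{1}{\hat m_{\varphi,1}(x)}\Big\{[\hat m_{\varphi,2}(x)-E\hat m_{\varphi,2}(x)]-[E\hat m_{\varphi,2}(x)-m_\varphi(x)E\hat m_{\varphi,1}(x)]\Big\}-\frac{m_\varphi(x)}{\hat m_{\varphi,1}(x)}[\hat m_{\varphi,1}(x)-E\hat m_{\varphi,1}(x)].
\end{equation*}
The bias term $E\hat m_{\varphi,2}-m_\varphi E\hat m_{\varphi,1}=O(h^b)$ is handled exactly as in Theorem \ref{teo1}. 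Indeed, only \textbf{A2} and \textbf{A3} are used, through $w_{i,j}$ being supported on $B(x,h)^2$ and $|m_\varphi(\rchi_j)-m_\varphi(x)|\le C_2h^b$ there. So the deterministic part carries over verbatim.

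For the stochastic parts, the variance and covariance bounds of the proof of Theorem \ref{teo1} rely on \textbf{A4}, \textbf{A8}, \textbf{A9} and the lower bound on $E(w_{i,j})$ from \textbf{A10}. They do not rely on \textbf{A6}(i), which is used only to bound from below moments such as $E(\beta_i^2K_i)$ and $E(\beta_i^4K_i^2)$ and hence the expected denominator. I will therefore keep the conclusion
\begin{equation*}
\hat m_{\varphi,l}(x)-E\hat m_{\varphi,l}(x)=\Oac\Big(\sqrt{\ln n/(n\rphi_x(h)^{4p_{\max}-1})}\Big),
\end{equation*}
if it survives without \textbf{A6}(i). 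If it does not, I will fall back on Chebyshev's inequality: by Chebyshev, the variance bound alone gives the $O_p$ rate. That is, $\var(\hat m_{\varphi,l})$ is bounded by a covariance sum over quadruples $(i,j,i',j')$, split into near and far index blocks and controlled via \textbf{A9} and Davydov's inequality with the \textbf{A8} mixing rate. This yields an $O_p$ bound of order at most the stated square root, since $\ln n\ge 1$ only enlarges the rate.

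The key remaining issue is the denominator. Almost complete convergence needs an exponential inequality requiring uniform lower bounds; for $O_p$ it suffices to show $\hat m_{\varphi,1}(x)\to 1$ in probability. Indeed $E\hat m_{\varphi,1}=1$ by construction and $\var(\hat m_{\varphi,1})\to0$ by the above bound together with $\ln n/(n\rphi_x(h)^{4p_{\max}-1})\to0$, which holds under \textbf{A8} since $4p_{\max}-1<2$. Then $1/\hat m_{\varphi,1}=O_p(1)$, and Slutsky-type product rules for $O_p$ combine the terms into the claimed bound.

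The main obstacle is verifying that the variance bound does not itself need \textbf{A6}(i). The lower bound $E(w_{i,j})\ge c\,h^2\Psi_{x,i,j}(h)$-type estimates should come from \textbf{A10} and \textbf{A6}(ii). The upper bounds on second moments of $w_{i,j}$ come from \textbf{A3}, \textbf{A5} and \textbf{A9}. I would check carefully that the ratio of these yields exactly the exponent $4p_{\max}-1$ without invoking \textbf{A6}(i).
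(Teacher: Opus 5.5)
Your outer structure matches the paper's: the $\Gamma$-normalised ratio, the bias from Proposition~1, $1/m_0=O_p(1)$, and Chebyshev in place of Fuk--Nagaev. The gap is the step that actually carries the corollary, namely $m_1-Em_1=O_p\big((n\rphi_x(h)^{4p_{\max}-1})^{-1/2}\big)$ and the same for $m_0-1$. You propose to bound the variance of the normalised double sum by a covariance sum over quadruples $(i,j,i',j')$ ``via \textbf{A9} and Davydov'', and you leave the resulting exponent open. As stated, this does not go through.

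Davydov's inequality requires the two index sets to be separated in time. That fails whenever $\{i',j'\}$ interleaves with or straddles $\{i,j\}$. More seriously, sharp bounds on the quadruple terms need joint small-ball probabilities of three or four curves, and conditional moments of $\varphi_j\varphi_{j'}$ given more than two curves. \textbf{A9} and \textbf{A4} supply only two-curve information. Reducing to them by dropping constraints loses a power of $\rphi_x(h)$. For example, with $i=i'$ and $j,j'$ far apart, you need $|\cov(w_{i,j}\varphi_j,w_{i,j'}\varphi_{j'})|\lesssim h^4\rphi_x(h)^3$ on average over the $n^3$ such triples. The available bound is $Ch^4\Psi_{x,j,j'}(h)$, at best of order $h^4\rphi_x(h)^2$, and with $\Gamma(x)\ge cn^2h^2\rphi_x(h)^{1+2p_{\max}}$ it only yields the slower rate $(n\rphi_x(h)^{4p_{\max}})^{-1/2}$.

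The missing idea is an algebraic factorisation that removes any need for a U-statistic variance. Since $w_{i,i}=0$,
\[
\sum_{i\neq j}w_{i,j}\varphi_j=\Big(\sum_{j}K_j\varphi_j\Big)\Big(\sum_{i}K_i\beta_i^2\Big)-\Big(\sum_{i}K_i\beta_i\Big)\Big(\sum_{j}K_j\beta_j\varphi_j\Big),
\]
so $m_1=Q[S_{2,1}S_{4,0}-S_{3,1}S_{3,0}]$ with single sums $S_{q,\ell}$ and $Q=(nh\rphi_x(h))^2/\Gamma(x)$.

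\begin{itemize}
\item By the upper bound of Lemma~3, $\var(S_{q,\ell})\le S^2_{n,\ell,q-2}(x)/(n\rphi_x(h))^2=O(1/(n\rphi_x(h)))$. That bound involves only pairwise covariances and uses \textbf{A1}--\textbf{A5}, \textbf{A8}, \textbf{A9}, but not \textbf{A6}(i).
\item Chebyshev then gives $S_{q,\ell}-ES_{q,\ell}=O_p((n\rphi_x(h))^{-1/2})$.
\item Expand $S_{2,1}S_{4,0}-E(S_{2,1}S_{4,0})=(S_{2,1}-ES_{2,1})(S_{4,0}-ES_{4,0})+ES_{4,0}(S_{2,1}-ES_{2,1})+ES_{2,1}(S_{4,0}-ES_{4,0})-\cov(S_{2,1},S_{4,0})$. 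With $ES_{q,\ell}=O(1)$ and the covariance bounded by Cauchy--Schwarz, everything stays at that order, with no fourth moments needed.
\end{itemize}

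The exponent comes entirely from $Q$. The bound $E(w_{i,j})\ge ch^2\Psi_{x,i,j}(h)$ (Lemma~1(iii) via \textbf{A10}, plus \textbf{A6}(ii)), together with \textbf{A9} and \textbf{A7}, gives $\Gamma(x)\ge cn(n-1)h^2\rphi_x(h)^{1+2p_{\max}}$. Hence $Q=O(\rphi_x(h)^{1-2p_{\max}})$, and $\rphi_x(h)^{1-2p_{\max}}(n\rphi_x(h))^{-1/2}=(n\rphi_x(h)^{4p_{\max}-1})^{-1/2}$.

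Two attributions in your sketch are also off.

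\textbf{A6}(i) does not lower-bound the expected denominator. It enters only through Lemma~1(iv), to give $S^2_{n,\ell,k}(x)\ge cn\rphi_x(h)$, which is what forces $t_n\to0$ in the Fuk--Nagaev step. So the paper's route to the $\Oac$ rate breaks down without it, and Chebyshev is the actual argument rather than a fallback.

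The bias step needs more than \textbf{A2}--\textbf{A3}. Because $w_{i,j}$ can be negative, one must bound $\sum_{i\neq j}E|w_{i,j}|/\sum_{i\neq j}E(w_{i,j})$. Proposition~1 does this with \textbf{A6}(ii), \textbf{A9} and \textbf{A10}; these are retained, so your conclusion there still holds.
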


In particular, if the data is independent (and thus, $\alpha$-mixing with mixing coefficient zero), then the estimator converges in the standard almost complete convergence rate (see Theorem 4.2 of \cite{barrientos} or Corollary 11.6 of \cite{ferraty}). This result is stated as follows.
\begin{coro}\label{coro2}
	Let the conditions of Theorem \ref{teo1} be satisfied. In addition, if $\{(\rchi_i,Y_i)\}_{i\in\N}$ is independent, it follows that
	\begin{equation}\label{e7}
		\hat m_\varphi (x)-m_\varphi(x)=O(h^b)+\Oac \parent[\Bigg]{\sqrt{\frac{\ln n}{n\rphi_{x}(h)}}\ }.
	\end{equation}
\end{coro}

\subsection{Uniform consistency}\label{sec3.3}

In this section, rates of almost sure convergence are established uniformly on a compact subset $S$ of the semimetric space $(\F,d)$. The main tool to cope with uniformity consists in covering $S$ with a finite number of balls. For this reason, the following topological concept introduced by \cite{kolmogorov} will be useful.

\begin{df}[Kolmogorov's entropy]
	Let $S$ be a subset of $(\F,d)$ and let $\epsilon>0$ be given. A finite set of elements $x_1,\dotsc,x_N\in\F$ is called an \textit{$\epsilon$-net} for $S$ if $S\subseteq \bigcup_{k=1}^N \{x\in\F:d(x,x_k)<\epsilon\}$. The quantity $\Phi_S(\epsilon)=\ln (N_\epsilon(S))$, where $N_\epsilon(S)$ is the minimum number of open balls in $\F$ of radius $\epsilon$ which is necessary to cover $S$, is called \textit{Kolmogorov's $\epsilon$-entropy} of the set $S$.
\end{df}

\vspace{-1em}
\paragraph{Assumptions}
Suppose that $\{x_1,\dotsc,x_{N_{r_n}(S)}\}\subseteq S$ is an $r_n$-net for $S$ with $\{r_n\}_{N\in\N}$ being a positive real sequence. Let $\overset{a}{\approx}$ denote asymptotic equivalence. The assumptions needed for the asymptotic results are listed as follows. 
\vspace{0.5em}
\begin{enumerate}
	\item[\textbf{H1}.] There exist a differentiable function $\rphi$ and constants $c,C>0$ such that $\forall x\in S, h>0, i\in[n]: 0<c\rphi(h)\leq \rphi_{x,i}(h)\leq C\rphi(h)$. Moreover, the function  $\rphi$ and its derivative $\rphi'$ are such that $\lim_{\eta\to 0}\rphi(\eta)=0$ and $\exists \eta_0>0:\forall \eta<\eta_0: \rphi'(\eta)<C$, respectively.
	\vspace{0.5em}
	\item [\textbf{H2}.] There exist $0<b,C<\infty$ such that for all $x_1\in S$ and all $x_2\in B(x_1,h)$ it holds that $\abs{m_\varphi(x_1)-m_\varphi(x_2)}\leq C [d(x_1,x_2)]^b$;
	\vspace{0.5em}
	
	\item [\textbf{H3}.] The function $\beta(\cdot,\cdot)$ satisfies \textbf{A3} uniformly on $x\in S$ and the Lipschitz condition that $\exists C>0:\forall x\in \F:\forall x_1,x_2\in S: \abs{\beta(x,x_1)-\beta(x,x_2)}\leq Cd(x_1,x_2)$;

	\vspace{0.5em}
	\item [\textbf{H4}.] The kernel function $K$ is Lipschitz continuous on $[0,1]$ and satisfies \textbf{A5}(I) or \textbf{A5}(II). If $K(1)=0$, the function $\rphi_{x,i}$ has to fulfill the additional condition that
	$ \exists c_0>0,\epsilon_0<1,n_0\in\N^*:\forall i\in[n]:\forall n>n_0: \inf_{x\in S}\int_0^{\epsilon_0}\rphi_{x,i}(uh)du>c_0 \inf_{x\in S}\rphi_{x,i}(h)$; 
	
	\vspace{0.5em}
	
	\item [\textbf{H5}.] The sequence $(Y_i,\rchi_i)_{i\in\N}$ is geometrically strongly mixing, i.e., $\alpha(k)\leq t^k, k\in\N,$ for some $t\in(0,1)$. Moreover, $\exists\Delta_1\in(0,1)$ such that $\rphi_{x}(h)^2\geq (\ln n)^3/n^{\Delta_1}$.
	
	\vspace{0.5em}
	
	\item[\textbf{H6}.] $\exists C,c>0:\forall x,x'\in S: \forall i,j\in[n]: \exists 1/4<p_{1,i,j}\leq 1/2\leq p_{2,i,j}< 3/4$:
	$
	c[\rphi_{x,i}(h)\rphi_{x',j}(h)]^{1/2+p_{2,i,j}}\leq \Psi_{x,x',i,j}(h)\leq C [\rphi_{x,i}(h)\rphi_{x',j}(h)]^{1/2+p_{1,i,j}}.
	$
	\vspace{.5em}
	\item [\textbf{H7}.] Uniformly on $x\in S$, the following assumptions hold: (i) \textbf{A4} for $m\geq 1$; (ii) \textbf{A6}; and (iii) \textbf{A10}.
	
	\vspace{.5em}
	
	\item [\textbf{H8}.] $r_n=O(\ln n/n)$ and $\Phi_S(\ln n/n)\overset{a}{\approx}C\ln n$.
\end{enumerate}
\vspace{0.5em}

The set of assumptions \textbf{H1}-\textbf{H8} is, roughly, an adaptation of the conditions \textbf{A1} - \textbf{A10} to the uniform case. \textbf{H1} is similar to assumptions (H1) and (H5a) of \cite{ferraty2} or (4) and (5) of \cite{benhenni}. \textbf{H3} is identical to assumptions (U3) of \cite{messaci} or \cite{leulmi}. Assumptions \textbf{H4} and \textbf{H8} are related to (H4) and Example 4 of \cite{ferraty}, respectively. The mixing decay in H5 has already been investigated  by several works \citep{truong,vogt,ferraty}, and, here, was useful to establish the asymptotic order of the stochastic part of the estimator (see the proof of Proposition \ref{p4}).

The following theorem states the uniform almost complete rate of convergence of the estimator defined in \eqref{eq3}.
\begin{thm}\label{teo2}
	Suppose that assumptions \textbf{H1}-\textbf{H8} are fullfiled. Then 
	\begin{equation*}
		\sup_{x\in S} \abs[\big]{\hat m_\varphi (x)-m_\varphi(x)}=O(h^b)+\Oac\parent[\bigg]{\sqrt{\frac{\ln n}{n\rphi_x(h)^{4p_\text{max}-1}}}}.
	\end{equation*}
\end{thm}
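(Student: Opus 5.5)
We follow the standard decomposition used for Theorem \ref{teo1}, made uniform over $S$ by a covering argument. Write $\hat m_\varphi(x)=\hat g(x)/\hat f(x)$ with
\[
\hat f(x)=\frac{1}{n(n-1)E(w_{12}(x))}\sum_{i\neq j}w_{i,j}(x),\qquad \hat g(x)=\frac{1}{n(n-1)E(w_{12}(x))}\sum_{i\neq j}w_{i,j}(x)\varphi_j .
\]
(For heterogeneous data, replace the normaliser by the average of $E(w_{i,j}(x))$ over pairs.) We then use the identity
\[
\hat m_\varphi(x)-m_\varphi(x)=\frac{1}{\hat f(x)}\Big[(\hat g(x)-E\hat g(x))-(m_\varphi(x)-E\hat g(x))\Big]-\frac{m_\varphi(x)}{\hat f(x)}(\hat f(x)-1).
\]
It therefore suffices to establish three uniform statements on $S$:
\begin{itemize}
\item[(a)] $\sup_{x\in S}\abs{m_\varphi(x)-E\hat g(x)}=O(h^b)$;
\item[(b)] $\sup_{x\in S}\abs{\hat f(x)-1}$ and $\sup_{x\in S}\abs{\hat g(x)-E\hat g(x)}$ are $\Oac\big(\sqrt{\ln n/(n\rphi(h)^{4p_{\max}-1})}\big)$;
\item[(c)] $\exists \delta>0$ such that $\sum_n P(\inf_{x\in S}\hat f(x)<\delta)<\infty$.
\end{itemize}
Statement (c) follows from (b), since the rate tends to $0$ by \textbf{H5}.

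For (a), condition on $(\rchi_i,\rchi_j)$. Because $\epsilon_j$ is centred and independent of $\rchi_i$, $E(w_{ij}\varphi_j\mid\rchi_i,\rchi_j)=w_{ij}m_\varphi(\rchi_j)$. On the support of $K_j$ we have $d(\rchi_j,x)\le h$, so \textbf{H2} gives $\abs{m_\varphi(\rchi_j)-m_\varphi(x)}\le Ch^b$ uniformly in $x\in S$. This yields the bias bound, exactly as in the pointwise case but with all constants uniform by \textbf{H1}--\textbf{H3} and \textbf{H7}.

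For (b), take the $r_n$-net $x_1,\dots,x_{N}$ of \textbf{H8} and let $k(x)$ denote the index of the closest centre. For $\hat f$ (and similarly $\hat g$) we split
\[
\sup_{x\in S}\abs{\hat f(x)-E\hat f(x)}\le \sup_{x}\abs{\hat f(x)-\hat f(x_{k(x)})}+\max_{k}\abs{\hat f(x_k)-E\hat f(x_k)}+\sup_x\abs{E\hat f(x_{k(x)})-E\hat f(x)}.
\]

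\emph{First and third terms.} Use the Lipschitz continuity of $K$ (\textbf{H4}) and of $\beta$ in its second argument (\textbf{H3}). Since $\abs{d(\rchi_i,x)-d(\rchi_i,x_k)}\le r_n$, each $\abs{w_{ij}(x)-w_{ij}(x_k)}$ is bounded by $C r_n h^{-1}\cdot h^2$ times indicators of balls of radius $h+r_n$. After normalising by $E w_{12}=\Theta(h^2\Psi)$, these terms are deterministic multiples of $r_n/(h\,\rphi(h)^{c})$ times averages of indicators. Because $r_n=O(\ln n/n)$ and $\rphi$ has bounded derivative (\textbf{H1}), these are $o$ of the target rate, almost completely. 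The indicator averages are handled by the exponential inequality below.

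\emph{Middle term.} Apply the union bound: $P(\max_k\abs{\cdot}>\epsilon u_n)\le N_{r_n}(S)\max_kP(\abs{\hat f(x_k)-E\hat f(x_k)}>\epsilon u_n)$. Write $\hat f(x_k)-E\hat f(x_k)$ through products of single sums $\sum_i K_i\beta_i^{l}$, as in the proof of Theorem \ref{teo1}, and apply a Fuk--Nagaev type exponential inequality for $\alpha$-mixing variables. Geometric mixing (\textbf{H5}) makes the mixing remainder term $n r^{-1}\alpha(r)$ exponentially small, so the pointwise probability is $O(n^{-C\epsilon^2})$. Choosing $\epsilon$ large then compensates for $N_{r_n}(S)=\exp(\Phi_S(r_n))\approx n^{C}$ from \textbf{H8}, and the resulting series converges.

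The main obstacle is the variance control inside the exponential inequality, uniformly in $k$. The covariance sum $\sum_{i\neq j}\cov(\cdot)$ must be bounded using \textbf{H6} for close lags and Davydov's inequality for far lags, with constants independent of $x_k$. It is also necessary to obtain the lower bound $E w_{ij}(x)\ge c\,h^2\rphi(h)^{1+2p_{\max}}$ uniformly in $x\in S$ via \textbf{H7}(iii); this is exactly what produces the exponent $4p_{\max}-1$ in the rate. I would first redo the pointwise variance calculation with explicit constants, check that each constant depends only on the uniform bounds in \textbf{H1}, \textbf{H6} and \textbf{H7}, and then choose the blocking size $r\asymp \ln n$ permitted by geometric mixing.
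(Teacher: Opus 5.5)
Your proposal is correct and follows the paper's route. The paper gets Theorem 2 exactly as Theorem 1, from the $m_0,m_1$ decomposition together with two ingredients. The first is the uniform bias bound of Proposition 2, which handles the sign of $w_{i,j}$ through the lower bound $E(w_{i,j}(x))\geq ch^2\Psi_{x,i,j}(h)$; that lower bound uses \textbf{H6} and \textbf{H7}(ii)--(iii), not \textbf{H7}(iii) alone. The second is Proposition 4, whose proof is your three-term covering split: the $r_n$-net of \textbf{H8}, a union bound against $N_{r_n}(S)\approx n^{C_0}$, Fuk--Nagaev under geometric mixing, Lipschitz control of $K$ and $\beta$, and Davydov plus \textbf{H6} for the covariance sums. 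The one cosmetic difference is that the paper discretizes the single sums $S_{q,\ell}(x)$ and bounds $Q=(nh\rphi(h))^2/\Gamma(x)\leq C\rphi(h)^{1-2p_{\max}}$ separately, instead of discretizing $\hat f,\hat g$ together with their $x$-dependent normalizer.
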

According to Theorem \ref{teo2}, we can obtain the same convergence rate as that of Theorem \ref{teo1}, uniformly on $S$. Moreover, one can check that the conclusions in Corollaries \ref{coro1} and \ref{coro2}  can be analogously obtained uniformly on $S$.

\section{Application to Wiener processes and a simulation study}\label{sec4}

Consider the space of square integrable real-valued functions on $[0,1]$, denoted as $\F=L^2[0,1]$, equipped with the standard inner product $<x_1,x_2> \coloneqq \int_0^1 x_1(t)x_2(t)dt$, $\forall x_1,x_2\in\F$. It is known that $\F$  with this inner product is a separable Hilbert space. Let $\{\rchi_i\}_{i=1}^n$ be a collection of $n$  independent standard Wiener processes on $[0,1]$ in $\F$ (also known as \textit{Brownian motion}).
Since each $\rchi_i\coloneqq\{\rchi_i(t), 0\leq t\leq 1\}$ is a second order zero-mean process with continuous covariance function $E(\rchi_i(t)\rchi_i(s))\coloneqq \min (t,s),  \forall t,s\in[0,1]$, we can expand $\rchi_i(t)$ through the Karhunen-Lo\`eve theorem as follows
\begin{equation}\label{eqregre}
    \rchi_i(t)=\sum_{j=1}^\infty v_j(t)N_{i,j},\quad t\in[0,1],
\end{equation}
where
$v_j(t)=\sqrt{2}\sin((j-1/2)\pi t), j\in\N$, are the eigenfunctions of the  Hilbert-Schmidt integral operator on $L^2[0,1]$ corresponding to the decreasingly ordered eigenvalues  $\lambda_j=[(j-1/2)\pi]^{-2}$, and $\{N_{i,j}\}_{j\in\N}$ is a sequence of independent Gaussian random variables such that  $N_{i,j}\sim N(0,\lambda_j)$.

 In this section, we compare the performance of the functional local linear regression operator (FLL) with that of the functional local constant (FLC) in a simulation study.

\emph{Data Generating Process:}
The explanatory curves are given by \eqref{eqregre} and are evaluated on a grid of 100 equally spaced points in $(0, 1)$. The dependent scalar variable $Y$, is defined as
\begin{equation*}
	Y_i = \sqrt{N_{i,1} + N_{i,2}} + \epsilon_i \text{ for } i = 1, \dots, n,
\end{equation*}
where the errors $\epsilon_i$ follow a stationary AR(1) process
\begin{equation} \label{eq7_}
	\epsilon_i = \alpha \epsilon_{i - 1} + u_i,
\end{equation}
where $u_i \sim N(0, 0.01)$ and $\alpha\in \{ 0, 1 / 3,2 / 3\}$. The experiment involves $n_{r} = 250$ Monte Carlo replicates.

\emph{Performance evaluation:}
In order to evaluate performance of both FLC and FLL estimators, we compute the mean squared prediction error (MSPE) for the estimator $s$ and replication $j$ as follows:

\begin{equation}\label{eq8_}
	MSPE_{s}^{[j]} = \frac{1}{n} \sum_{i = 1}^n \left(
	\hat Y_{i, s}^{[j]} - m \left(\rchi_{i}^{[j]} \right)
	\right)^2, \ j = 1, 2, \dots, n_{r}
\end{equation}
where 
$\hat Y_{i, s}^{[j]}$ is the prediction of $Y_{i}^{[j]}$ for the estimator $s \in \{ \text{FLC, FLL} \}$, and $m \left(\rchi_{i}^{[j]} \right) := \sqrt{N_{i,1}^{[j]} + N_{i,2}^{[j]}}$.  

\emph{Estimation details:}
The estimators FLL and FLC share the following general formula:
\begin{equation}\label{eq9_}
	\hat{m}(x) = \frac{\sum_{i,j=1}^n w_{i,j}(x) Y_j}{\sum_{i,j=1}^n w_{i,j}(x)},
\end{equation}
where for FLL, $w_{i,j}(x)$ is given as in the equation (\ref{eq3}) and for FLC, $w_{i,j}(x)$ simplifies for all $i=1,\dotsc, n$ to $w_{j}(x) = K(d(\rchi_j, x) / h)$.
For the kernel density we use the following polynomial kernel, which satisfy all the requirements of Theorems \ref{teo1} and \ref{teo2}.
\begin{equation}\label{eq10_}
	K(u) = \frac{3}{2} \left(1 - u^2 \right) I_{[0, 1]}(u).
\end{equation}

In order to select the bandwidth $h$, we use a leave-one-out cross-validation procedure that may be described as follows. Given a sample $(X_i, Y_i)$, $i = 1, 2, \dots, n$, the optimal bandwidth $h_{opt}$ is defined as
\begin{equation}\label{eq11_}
	h_{opt} = \arg\min_{h} n^{-1} \sum_{k = 1}^{n} \biggl(Y_k - \hat{m}_{(-k)}(X_k) \biggr)^2,
\end{equation}
where 
\begin{equation}\label{eq12_}
	\hat{m}_{(-k)}(X_k) = \frac{\sum_{i,j=1; i,j \ne k}^n w_{i,j}(X_k) Y_j}{\sum_{i,j=1; i,j \ne k}^n w_{i,j}(X_k)}.
\end{equation}

For the locating functions $\beta(\cdot, \cdot)$ and $d(\cdot, \cdot)$ we consider the PCA semimetric, which is defined in \cite{ferraty} and may be summarized as follows. Under the assumption that $E \int \mathbf{\rchi}^2(s)ds < \infty$, the~following expansion holds
\begin{equation} \label{eq13_}
	\rchi = \sum_{k = 1}^{\infty} \left( \int \rchi(s) v_k(s) ds \right) v_k,
\end{equation}  
\noindent where $v_1, v_2, \ldots$ are orthonormal eigenfunctions of the covariance operator 
\[
\Gamma_{\rchi}(t,s) = E \left(\rchi(t)\rchi(s) \right). 
\]    
From an empirical point of view, given an integer $r$, let
\begin{equation} \label{eq14_}
	\tilde{\rchi}^{(r)} = \sum_{k = 1}^{r} \left( \int \rchi(s) v_k(s) ds \right) v_k,
\end{equation}  
be a truncated version of $\rchi$. Based on the $L^2$-norm, for all \textcolor{purple}{$(X_1,X_2) \in \F^2$}, the following parametrized family of semi-metrics may be defined
\begin{equation}\label{eq15_}
	d_{r}^{PCA}(X_1,X_2) = \sqrt{\sum_{k=1}^r \left( \int(X_1(s) - X_2(s))v_{k}(s)ds \right)^2}.
\end{equation}

In order to estimate the bandwidth $h$ and the PCA semimetric parameter $r$, we apply the cross-validation procedure described in \eqref{eq11_} and \eqref{eq12_}. For some candidates $r$ and $h$ we choose the pair $(r_{opt}, h_{opt})$ that produces the smallest value in \eqref{eq11_}.
It is important to note that FLL requires two values of $r$, one associated with $\beta (\cdot,\cdot)$ and the other associated with $d(\cdot,\cdot)$.

\emph{Results:} For performance comparison, we report the distributions of the mean squared prediction errors (MSPE), as stated in equation \eqref{eq8_}, of the FLC and FLL estimators via the boxplots displayed in Figure \ref{fig:mspe}. Three pairs of boxplots are shown, each one related to a different value for the coefficient of the AR(1) process that characterizes the error sequence, as highlighted in equation \eqref{eq7_}.

In general, 
we see that the performance of both estimators slightly degrades as the level of dependence in the error sequence increases. Comparing one estimator with the other, FLL clearly outperforms FLC in terms of MSPE, having smaller median and smaller interquartile range compared to FLC. This improved performance of FLL over FLC is consistent across all levels of dependence in the error sequence considered. The better performance of the local linear functional estimator compared to the local constant is also documented in \cite{barrientos} and in \cite{leulmi}. 

\begin{figure}[ht]
	\caption{
		\textbf{MSPE - Simulation Study}\\[0.2cm]
		\scriptsize{
			The figure displays the boxplot of the mean squared predictive error (MSPE), as described in equation \eqref{eq8_}, for both FLC and FLL estimators. Three values for the coefficient of the AR(1) process that characterizes the error sequence are presented: $0, 1 / 3$ and $2 / 3$.
		}
	}
	\centering
	\includegraphics[width=2.5in, height=2.5in]{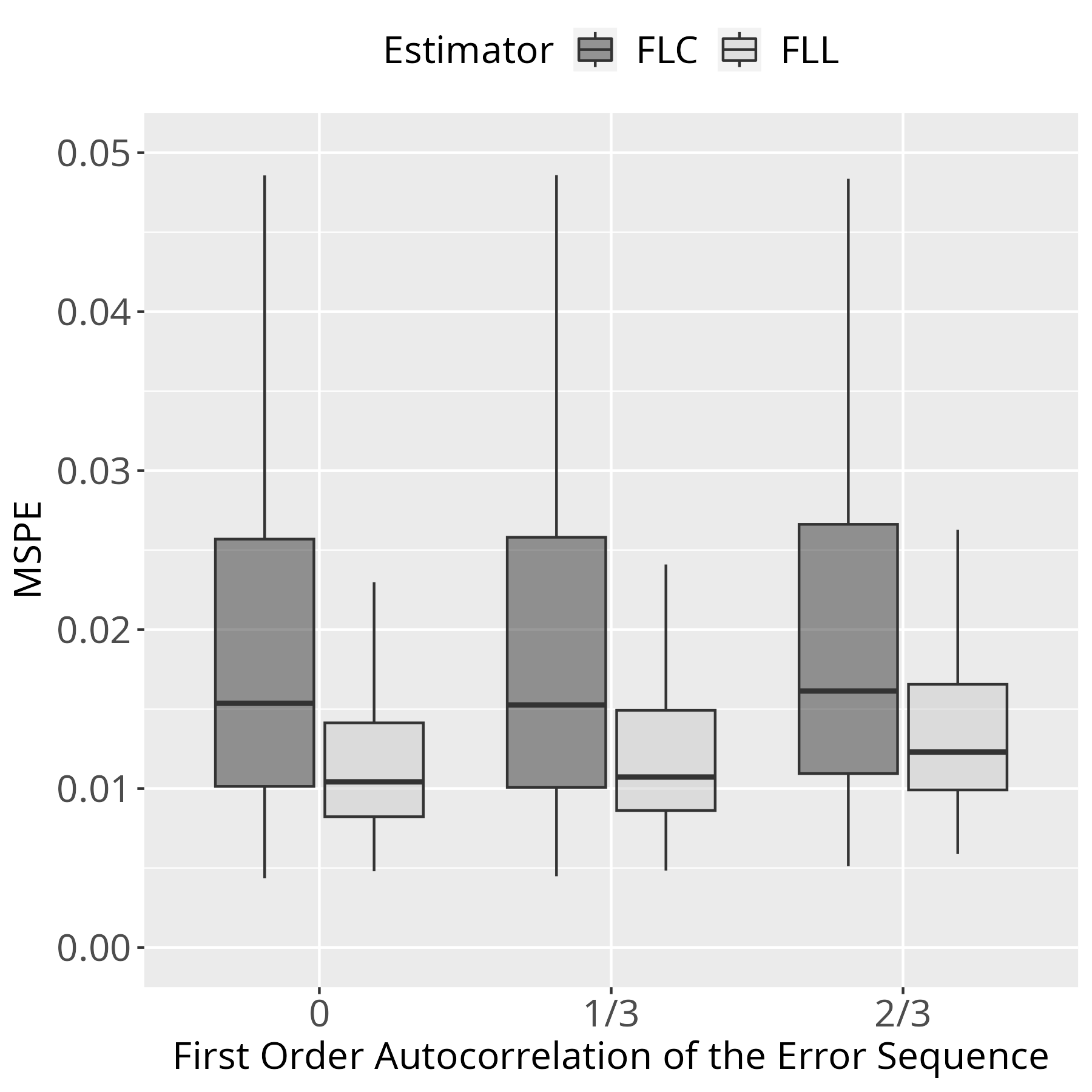}\\[-0.2cm]
	\label{fig:mspe}    
\end{figure}

\section{Real Data Application}\label{sec5}

In this section, we compare FLL and FLC estimators in a one step ahead energy consumption forecast situation.

The empirical data set we use here is hourly energy consumption data from America Electric Power (AEP). This data is under public domain license and it is available on the Kaggle website (https://www.kaggle.com/robikscube/hourly-energy-consumption). 

We consider the link between the logarithm (log) of hourly energy consumption of a day (the explanatory variable) and the log of total consumption of the following day (the response variable). Therefore, the explanatory variable is a curve discretized over 24 points and the response is a scalar variable.

The data ranges from 2004-10-01 to 2018-08-02, giving $T = 5054$ days of observations. A rolling window scheme is considered with window length equal to $W = 1081$ (3 comercial years plus the forecast horizon). Therefore, we generate $T_{out} = 3973$ one step ahead forecasts of the daily energy consumption.

\begin{figure}[ht]
	\caption{
		\textbf{Time Series of Energy Consumption}\\[0.2cm]
		\scriptsize{
			The figure displays the sample of hourly energy consumption data from America Electric Power (AEP). The data ranges from 2004-10-01 to 2018-08-02, giving $T = 5054$ days of observations. A rolling window scheme is considered with window length equal to $W = 1081$. Thus, we generate $T_{out} = 3973$ one step ahead forecasts of the daily energy consumption.
		}
	}
	\centering
	\includegraphics[width=4.4in, height=2.2in]{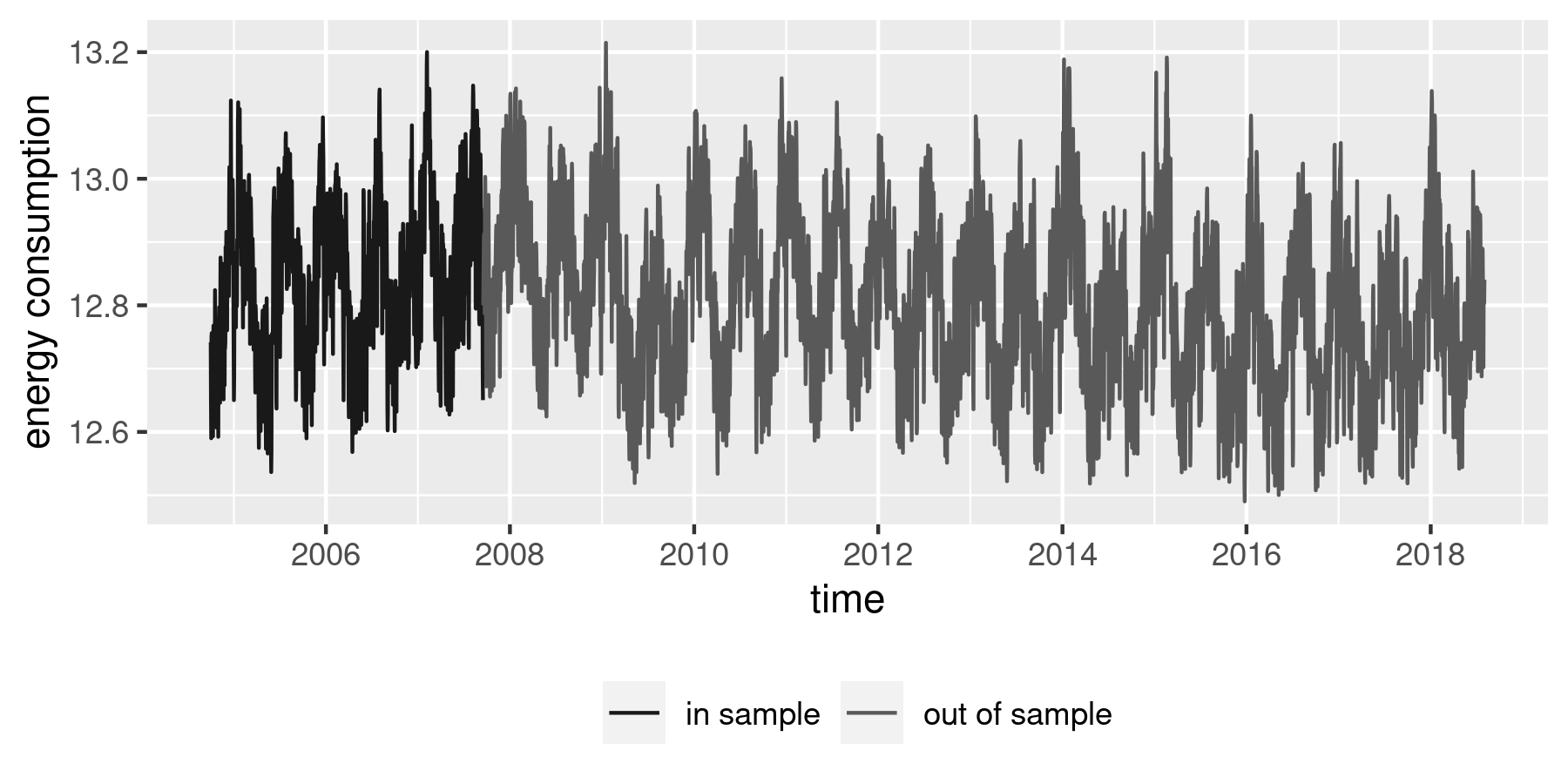}\\[-0.2cm]
	\label{fig:descriptive}    
\end{figure}

\emph{Performance evaluation:}
We graphically analyze the cumulative squared forecast error (CSFE) as proposed by \cite{WelchGoyal2008}. The CSFE specific to our case may be defined as

\begin{equation} \label{eq16_}
	CSFE_{i_{t}} = \sum_{j = i_{1}}^{i_{t}} \left[
	\left(\hat y_{j + 1 | j}^{FLC} - y_{j + 1} \right)^2 -
	\left(\hat y_{j + 1 | j}^{FLL} - y_{j + 1} \right)^2
	\right],
\end{equation}
where $\hat y_{j + 1 | j}^{FLC}$ and $\hat y_{j + 1 | j}^{FLL}$ are the one step ahead forecast of FLC and FLL estimators, respectively; $y_{j+1}$ is the observed response variable at time $j + 1$ and $i_{t}$, with $t = 1, 2, \dots, T_{out}$ are the indexes of the observations that are relevant to the forecast exercise.
Increasing CSFE implies better predictive performance of the FLL estimator compared to the FLC estimator, while decreasing CSFE implies otherwise.
In order to test and compare the predictive ability of FLL and FLC we apply the test of conditional predictive ability proposed by \cite{GiacominiWhite2006}, \textcolor{purple}{shortly referred to as \textit{GW-test}}. The null hypothesis here is that FLC performs at least as good \textcolor{purple}{as} FLL in terms of squared forecasting errors.

\emph{Estimation details:}
Now, we detail the estimation procedure for FLL and FLC. In order to select the bandwidth $h$, we use the leave-one-out cross-validation procedure described in equations (\ref{eq11_}) and (\ref{eq12_}). For the locating functions $\beta(\cdot, \cdot)$ and $d(\cdot, \cdot)$, we choose the PCA semimetric, which is suitable when the number of discretized points is small. 
In order to estimate the bandwidth $h$ and the PCA semimetric parameter $r$, we apply the same scheme \textcolor{purple}{as} described in the simulation study.

\begin{figure}[h]
	\caption{
		\textbf{CSFE - Energy Forecast Application}\\[0.2cm]
		\scriptsize{
			The figure displays the cumulative squared forecast error (CSFE) as described in equation \eqref{eq16_}. Increasing CSFE implies better predictive performance of the FLL estimator compared to the FLC estimator, while decreasing CSFE implies otherwise.
		}
	}
	\centering
	\includegraphics[width=2.5in, height=2.5in]{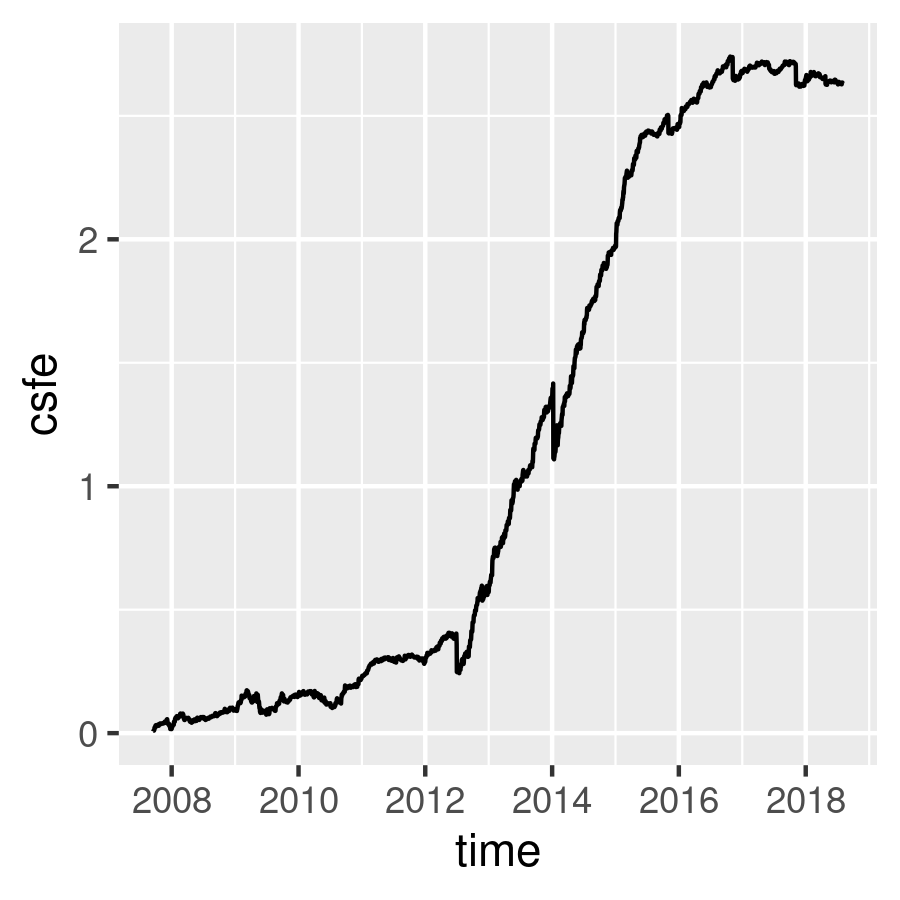}\\[-0.2cm]
	\label{fig:csfe}    
\end{figure}

\emph{Results:}
 The results for the CSFE are presented in Figure \ref{fig:csfe}. The overall conclusion is that FLL tends to outperform FLC during almost the entire period considered. One exception is the last part of the sample, starting approximately from the first quarter of 2017.     
Using squared forecasting errors as performance criteria, the GW-test rejects the null with p-value equal to $1.17 \times 10^{-08}$, which means that the forecasts of the FLL estimators are significantly more accurate than those of the FLC.

\section{Conclusion}\label{sec6}
The main contribution of this paper is a step towards the functional nonparametric modeling when the data is heterogeneously distributed and strongly mixing. Our theoretical results show that the almost complete convergence rate can be slower in the presence of data dependence. This is so because  our framework links the joint concentration properties of the data with its dependence. When the data is independent, however, the standard rate of convergence is obtained. Moreover, under our conditions, it is demonstrated that the pointwise and uniform convergence rates are the same on compact sets. The simulation results showed a good overall performance of the functional local linear estimator in comparison with the local constant estimator. In addition, a one step ahead energy consumption forecasting exercise illustrates that the forecasts of the former estimator are significantly more accurate than those of the latter.

\section*{Declarations}
\textbf{Conflict of interest}.The authors have no competing interests to declare that are relevant to the content of this article.

\bibliographystyle{apalike}
\bibliography{biblio}
\section*{Appendix A: Auxiliary results}
\paragraph{Lemmas}
\vspace{1em}

In this section, whenever possible, we will omit the dependence of the following terms on $x$: $K_i(x)=K_i$ and $\beta_i(x)=\beta_i$. In addition, define $\N_0=\N\cup\{0\}$. The proofs of the lemmas below can be found in Section 2 of the Supplementary Material.
\begin{lemma}\label{l1}
	Let the assumptions \textup{\textbf{A1}, \textbf{A3}, \textbf{A4}, \textbf{A5}, \textbf{A6}(i) } and   \textup{\textbf{A10}} hold. Then for all $i,j\in[n]$ and $n$ sufficiently large we have that:
	\begin{enumerate}
		\item[(i)] $ E(K_i^q \abs{\beta_i}^\ell)\leq Ch^\ell\rphi_{x,i}(h), \forall (q,\ell)\in \N\times \N_0$;
		\item[(ii)] $E(K_i K_j \abs{\beta_i}^{\ell_1}\abs{\beta_j}^{\ell_2})\leq C h^{\ell_1+\ell_2} \Psi_{x,i,j}(h), \forall \ell_1,\ell_2\in[2]$;
		\item[(iii)] $E(K_i K_j \beta_i^2)>c^*h^2\Psi_{x,i,j}(h)$ where $c^*=c_3c_5\min(1,c_{10})>0$;
		\item[(iv)] $E(K_i^2 \beta_i^\ell)> ch^\ell\rphi_{x,i}(h), \forall \ell\in\{0,2,4\}$;
        \item[(v)] $I(0\leq d(x,\rchi_i)\leq h)E(\abs{\varphi_i}^\ell|\rchi_i)\leq C, \forall \ell\in\N_0$.
	\end{enumerate}
\end{lemma}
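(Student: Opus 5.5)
Throughout, the kernel is supported on $[0,1]$ (by \textbf{A5}), so every $K_i$ vanishes unless $d(x,\rchi_i)\leq h$. All five bounds then come from combining \textbf{A3}, which gives $c_3 d(x,\rchi_i)\le|\beta_i|\le C_3 d(x,\rchi_i)$, with the sandwich bounds on $K$ and with the usual Fubini representation of kernel moments via small ball probabilities.

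\emph{Upper bounds (i), (ii), (v).} For (i), on the event $\{d(x,\rchi_i)\le h\}$ we have $|\beta_i|^\ell\le C_3^\ell h^\ell$. Under either type in \textbf{A5}, $K\le C$ on $[0,1]$: directly in case (I), and in case (II) because $K$ has a bounded derivative and $K(1)=0$. Hence $E(K_i^q|\beta_i|^\ell)\le C^qC_3^\ell h^\ell P(d(x,\rchi_i)\le h)=Ch^\ell\rphi_{x,i}(h)$. Part (ii) is identical, using the joint event $\{d(x,\rchi_i)\le h,\ d(x,\rchi_j)\le h\}$, whose probability is $\Psi_{x,i,j}(h)$. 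For (v), the indicator restricts to $B(x,h)$. By \textbf{A4} and continuity of $\gamma_{\ell,i}$ at $x$, $\gamma_{\ell,i}$ is bounded near $x$ by $C_4+1$ for $h$ small. I will read the statement as an a.s.\ bound using a version of the conditional expectation, with the uniformity in $i$ taken from \textbf{A4}; the case $\ell=0$ is trivial.

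\emph{Lower bounds (iv).} For $\ell=0$: in case (I), $K^2\ge c_5^2 1_{[0,1]}$, which gives $\ge c\rphi_{x,i}(h)$. In case (II), $K(u)\ge c_5'(1-u)\ge c_5'(1-\epsilon_0)$ on $[0,\epsilon_0]$, so $E K_i^2\ge c\,\rphi_{x,i}(\epsilon_0 h)$. I then lower-bound $\rphi_{x,i}(\epsilon_0 h)$ by $\int_0^{\epsilon_0}\rphi_{x,i}(uh)\,du/\epsilon_0>c_0\rphi_{x,i}(h)$, using monotonicity of $\rphi_{x,i}$ and the extra condition in \textbf{A5}.

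For $\ell=2,4$: since $\beta_i^\ell\ge c_3^\ell d(x,\rchi_i)^\ell$, it suffices to bound $E[K(D_i/h)^2 (D_i/h)^\ell]$ from below, where $D_i=d(x,\rchi_i)$. The cleanest route is to write $K^2(z)\ge cK(z)$ on $\supp K$ in case (I); in case (II) I restrict to $[0,\epsilon^*]$, where $K\ge c$. Then I apply the layer-cake identity
\[
E[g(D_i/h)1_{D_i\le \epsilon^* h}]=\int_0^{\epsilon^*}\rphi_{x,i}(zh,\epsilon^* h)\,g'(z)\,dz
\]
with $g(z)=z^\ell K(z)$, and invoke \textbf{A6}(i) to get $>c_6\rphi_{x,i}(h)$. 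I expect some bookkeeping to match the exact form of \textbf{A6}(i), for instance $\epsilon h$ versus $\epsilon^* h$, and the constant $h^\ell$ is recovered by rescaling.

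\emph{Lower bound (iii)} is the main obstacle, since it is where \textbf{A10} is needed. In case (I), $K_j\ge c_5 1_{D_j\le h}$, so $E(K_iK_j\beta_i^2)\ge c_3^2c_5h^2E[(D_i/h)^2K(D_i/h)1_{D_j\le h}]$. Applying Fubini in the $D_i$-variable, $z^2K(z)=\int_0^z \tfrac{d}{dt}(t^2K(t))\,dt$, gives
\[
E[(D_i/h)^2K(D_i/h)1_{D_j\le h}]=\int_0^1\Psi_{x,i,j}(zh,h,0,h)\,\tfrac{d}{dz}(z^2K(z))\,dz,
\]
and \textbf{A10} bounds this below by $c_{10}\Psi_{x,i,j}(h)$. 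In case (II), I also write $K(D_j/h)=-\int_{D_j/h}^1K'(w)\,dw$ and apply Fubini in both variables. This produces exactly the double integral in \textbf{A10}, which is $>c_{10}\Psi_{x,i,j}(h)$. Tracking constants yields $c^*=c_3c_5\min(1,c_{10})$ up to the conventions on generic constants. The delicate point is to justify the interchanges. They are legitimate because all integrands are bounded on compact domains: $K'$ is bounded by \textbf{A5}, and $z^2K(z)$ is $C^1$ on $[0,1]$.
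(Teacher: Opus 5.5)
Your proposal follows the paper's proof in all five parts: the \textbf{A3} sandwich together with the support of $K$ for (i)--(ii), and the layer-cake/Fubini representations fed into \textbf{A6}(i) and \textbf{A10} for (iv) and (iii). Your local variations are valid and slightly cleaner than the paper's. Writing $K(z)=-\int_z^1K'(w)\,dw$ in case (II) of (iii) lands directly on the double integral of \textbf{A10}, avoiding the paper's detour through $K(0)=-\int_0^1K'(w)\,dw$ and the identity $\Psi_{x,i,j}(th,h,0,h)-\Psi_{x,i,j}(th,h,wh,h)=\Psi_{x,i,j}(th,h,0,wh)$; likewise, using $K(u)\ge c_5'(1-u)$ plus monotonicity of $\rphi_{x,i}$ for $\ell=0$ in (iv) replaces the paper's integral computation. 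One small fix is needed in (v): \textbf{A4} bounds $\gamma_{m,s}(x)$ from above only for $m\ge 2$ (for $m=1$ it gives only the lower bound $c_4$), so for $\ell=1$ you need the conditional Jensen step the paper uses, $E(\abs{\varphi_i}\mid\rchi_i)\le\gamma_{2,i}(\rchi_i)^{1/2}$, before invoking continuity at $x$.
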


\begin{lemma}\label{l2}
	The cardinal of the set $S_1=\{(i,j)\in[n]^2: 1\leq\abs{i-j}\leq a_n\}$ is asymptotically equivalent to $2na_n$, where $a_n$ is some positive sequence diverging to infinity.
\end{lemma}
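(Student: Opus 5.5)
The count is a direct combinatorial computation, so I will simply enumerate $S_1$ by the lag $k=|i-j|$. For a fixed lag $k\in\{1,\dotsc,n-1\}$ there are exactly $n-k$ ordered pairs $(i,j)\in[n]^2$ with $j=i+k$, and another $n-k$ with $i=j+k$. Hence, as long as $a_n\le n-1$ (otherwise the lags are simply capped at $n-1$, which I handle below),
\begin{equation*}
\# S_1=\sum_{k=1}^{\lfloor a_n\rfloor}2(n-k)=2n\lfloor a_n\rfloor-\lfloor a_n\rfloor(\lfloor a_n\rfloor+1).
\end{equation*}

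The second step is to compare this with $2na_n$. Dividing by $2na_n$ gives
\begin{equation*}
\frac{\#S_1}{2na_n}=\frac{\lfloor a_n\rfloor}{a_n}\Bigl(1-\frac{\lfloor a_n\rfloor+1}{2n}\Bigr).
\end{equation*}
Because $a_n\to\infty$, we have $\lfloor a_n\rfloor/a_n\to1$. The bracket tends to $1$ exactly when $a_n=o(n)$. So the asymptotic equivalence $\#S_1\overset{a}{\approx}2na_n$ holds under the implicit requirement $a_n=o(n)$.

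That requirement is the only real point to settle. If $a_n/n\not\to0$, the equivalence fails: for example, $a_n=n$ gives $\#S_1=n^2-n\not\approx 2n^2$. I would therefore state explicitly that the lemma is used with $a_n\to\infty$ and $a_n=o(n)$. This is the regime of the blocking arguments in the proofs of the propositions, where $a_n$ is a slowly growing truncation lag such as a power of $\varphi_x(h)^{-1}$ or of $\ln n$ that is $o(n)$ under \textbf{A8}. In practice the proofs only need the upper bound $\#S_1\le 2na_n$, and that bound holds for every $n$ straight from the display above, with no asymptotics at all.
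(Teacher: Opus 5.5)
Your argument is correct and is essentially the paper's: the paper also counts $S_1$ exactly, only grouping pairs by their smaller index $s$ instead of by the lag $k$. It uses full blocks $E_s$ of size $2\lfloor a_n\rfloor$ for $s\le n-\lfloor a_n\rfloor$ plus truncated blocks $G_s$ near $n$, and reaches the same closed form $2n\lfloor a_n\rfloor-\lfloor a_n\rfloor^2-\lfloor a_n\rfloor$ before passing to $2na_n$. Your explicit caveat that this last step needs $a_n=o(n)$ is left implicit in the paper, and it is harmless: the truncation lags chosen in Lemmas 3, 5 and 6 (negative powers of the small ball probability) are $o(n)$ under \textbf{A8} and \textbf{H5}, and only the upper bound $\#S_1\le 2na_n$ is ever used.
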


Consider the following sum of covariances
\begin{equation*}
	S_{n,\ell,k}^2(x)\coloneqq \sum_{i,j=1}^n \abs[\big]{\cov \parent[\big]{\Lambda_{i}^{(k,\ell)}(x),\Lambda_{j}^{(k,\ell)}(x)}}, 
\end{equation*}
where
\begin{equation*}
	\Lambda_i^{(k,\ell)}(x)\coloneqq \frac{1}{h^k}\chave{K_i(x)\beta_i(x)^k\varphi_i^\ell-E\colc{K_i(x)\beta_i(x)^k\varphi_i^\ell}},
\end{equation*}
for $i\in[n]$ and $\ell,k\in\N_0$.

\begin{lemma}\label{l3}
	Let the assumptions  \textup{\textbf{A1}-\textbf{A5}, \textbf{A8}} and  \textup{\textbf{A9}} be fulfilled. Then for all $k\in\{0,1,2\}$, $\ell\in\{0,1\}$, it follows that
	\begin{equation}\label{eq17_}
		S_{n,\ell,k}^2(x)=O \parent[\big]{n\rphi_{x}(h)}.
	\end{equation}
	If in addition \textbf{A6}(i) and \textbf{A7} hold, then 
	\begin{equation}\label{eq18_}
		S_{n,\ell,k}^2(x)=\Theta \parent[\big]{n\rphi_{x}(h)}.
	\end{equation}
\end{lemma}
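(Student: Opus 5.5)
We prove Lemma \ref{l3} by splitting the double sum into a diagonal part, a near-diagonal part and a far-diagonal part, in the spirit of Masry-type covariance arguments. Write
\begin{equation*}
S_{n,\ell,k}^2(x)=\sum_{i=1}^n \var\parent[\big]{\Lambda_i^{(k,\ell)}}+\sum_{(i,j)\in S_1}\abs[\big]{\cov(\Lambda_i^{(k,\ell)},\Lambda_j^{(k,\ell)})}+\sum_{\abs{i-j}>a_n}\abs[\big]{\cov(\Lambda_i^{(k,\ell)},\Lambda_j^{(k,\ell)})},
\end{equation*}
with $S_1$ as in Lemma \ref{l2} and $a_n\to\infty$ to be tuned.

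\emph{Diagonal terms.} For these we use $\var(\Lambda_i)\leq h^{-2k}E(K_i^2\beta_i^{2k}\varphi_i^{2\ell})$. Conditioning on $\rchi_i$ and using the bound on $\rgamma_{2\ell,i}$ from \textbf{A4} (Lemma \ref{l1}(v)), Lemma \ref{l1}(i) together with \textbf{A3} gives $\var(\Lambda_i)\leq C\rphi_{x,i}(h)\leq C\rphi_x(h)$. Summing over $i$ yields $O(n\rphi_x(h))$.

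\emph{Near-diagonal terms.} Here we bound $\abs{\cov(\Lambda_i,\Lambda_j)}\leq h^{-2k}\parent{E\abs{K_iK_j\beta_i^k\beta_j^k\varphi_i^\ell\varphi_j^\ell}+E\abs{K_i\beta_i^k\varphi_i^\ell}E\abs{K_j\beta_j^k\varphi_j^\ell}}$. Conditioning on $(\rchi_i,\rchi_j)$ and using the uniform bound in \textbf{A4}, the first term is at most $C\Psi_{x,i,j}(h)$ by Lemma \ref{l1}(ii) (trivially so for $k=0$). By \textbf{A9} this is at most $C\rphi_x(h)^{1+2p_{1,i,j}}\leq C\rphi_x(h)^{3/2+\epsilon}$ for some $\epsilon>0$, since $p_{1,i,j}>1/4$. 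The product term is $O(\rphi_x(h)^2)$. With Lemma \ref{l2}, this block is therefore $O(na_n\rphi_x(h)^{1+2p_1})$, where $p_1=\min p_{1,i,j}>1/4$.

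\emph{Far-diagonal terms.} Since $\Lambda_i$ is a measurable function of $(Y_i,\rchi_i)$, we apply Davydov's covariance inequality for $\alpha$-mixing variables with exponents $p=q=r$, $1/p+1/q+1/r=1$ (so $r=3$, or a general $r>2$):
\begin{equation*}
\abs{\cov}\leq C\alpha(\abs{i-j})^{1-2/r}\parent{E\abs{\Lambda_i}^r}^{2/r}.
\end{equation*}
The bounded conditional moments of all orders in \textbf{A4} and Lemma \ref{l1}(i) give $E\abs{\Lambda_i}^r\leq C\rphi_x(h)$, hence $\abs{\cov}\leq C\alpha(\abs{i-j})^{1-2/r}\rphi_x(h)^{2/r}$. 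Summing with the arithmetic rate from \textbf{A8} bounds this block by $Cn\rphi_x(h)^{2/r}a_n^{-(3+\delta)(1-2/r)+1}$.

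\emph{Choice of $a_n$.} We need $a_n$ such that both off-diagonal blocks are $O(n\rphi_x(h))$, i.e.
\begin{equation*}
a_n\rphi_x(h)^{2p_1}=O(1)\quad\text{and}\quad a_n^{1-(3+\delta)(1-2/r)}\rphi_x(h)^{2/r-1}=O(1).
\end{equation*}
Taking $a_n=\rphi_x(h)^{-2p_1}$ (or of order $\rphi_x(h)^{-1/2}$), the second requirement reduces to the exponent inequality $2p_1\parent{(3+\delta)(1-2/r)-1}\geq 1-2/r$. With $p_1>1/4$, $\delta>0$ and $r$ large, this gives $(3+\delta)(1-2/r)-1\approx 2+\delta$, so the left side exceeds $(2+\delta)/2>1$ and the inequality holds. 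The lower bound on $\rphi_x(h)$ in \textbf{A8} guarantees $a_n\to\infty$ at most polynomially, so Lemma \ref{l2} applies. This bookkeeping, getting exponents consistent with $a>3$ and $p_1>1/4$, is where I expect the main difficulty; if $r$ large fails to close it, I would instead take $r$ just above 2 and adjust $a_n$ between the two constraints.

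\emph{Lower bound.} We have $S^2\geq\sum_i\var(\Lambda_i)-\sum_{i\neq j}\abs{\cov}$, and the off-diagonal part can be made $o(n\rphi_x(h))$ by the same choice of $a_n$ (taking $a_n$ slightly smaller so both blocks are $o(\cdot)$). For the variance, $\var(\Lambda_i)\geq h^{-2k}E(K_i^2\beta_i^{2k}\varphi_i^{2\ell})-C\rphi_{x,i}(h)^2$. For $\ell=1$, Jensen's inequality and $\rgamma_{2,i}\geq\rgamma_{1,i}^2>c_4^2$, with continuity at $x$ (\textbf{A4}), give a lower bound by $cE(K_i^2\beta_i^{2k})$. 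Lemma \ref{l1}(iv) (via \textbf{A6}(i)) then gives $\geq ch^{2k}\rphi_{x,i}(h)$. Finally \textbf{A7} turns $\min_s\rphi_{x,s}(h)$ into $c\rphi_x(h)$, so $\sum_i\var(\Lambda_i)\geq cn\rphi_x(h)$ and $S^2=\Theta(n\rphi_x(h))$.
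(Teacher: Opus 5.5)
Your proposal is correct and follows essentially the paper's proof: the same split into variances, an $S_1$ block bounded via Lemma \ref{l2} and \textbf{A9} by $Cna_n\rphi_x(h)^{1+2p_{\min}}$, and a far block handled by Davydov's inequality under \textbf{A8}, and the same lower bound via Jensen, Lemma \ref{l1}(iv) and \textbf{A7}; the only cosmetic difference is that you tune $a_n=\rphi_x(h)^{-2p_{\min}}$ to the near block, whereas the paper tunes $a_n$ so that the far block is exactly $O(n\rphi_x(h))$ and then shows the near block is $o(n\rphi_x(h))$. Note only that, since $S_{n,\ell,k}^2(x)$ is a sum of absolute covariances, $S_{n,\ell,k}^2(x)\geq\sum_i\var(\Lambda_i^{(k,\ell)}(x))$ holds trivially, so you need not make the off-diagonal blocks $o(n\rphi_x(h))$ for the lower bound.
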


\begin{lemma}\label{l4}
	Let the assumptions  \textup{\textbf{H1}, \textbf{H3}, \textbf{H4}, \textbf{H7}(ii)} and  \textup{\textbf{H7}(iii)} hold. Then for any $i,j\in[n]$ and $n$ sufficiently large we have that:
	\begin{enumerate}
		\item[(i)] $ \sup_{x\in S}E(K_i^q \abs{\beta_i}^\ell)\leq Ch^\ell\sup_{x\in S}\rphi_{x,i}(h),\forall (q,\ell)\in \N\times \N_0$;
		\item[(ii)] $\sup_{x\in S}E(K_i K_j \abs{\beta_i}^{\ell_1}\abs{\beta_j}^{\ell_2})\leq C h^{\ell_1+\ell_2} \sup_{x\in S}\Psi_{x,i,j}(h)$,  for every $\ell_1,\ell_2\in[2]$;
		\item[(iii)] $\inf_{x\in S}E(K_i K_j \beta_i^2)>ch^2\inf_{x\in S}\Psi_{x,i,j}(h)$;
		\item[(iv)] $\inf_{x\in S}E(K_i^2 \beta_i^\ell)> ch^\ell\inf_{x\in S}\rphi_{x,i}(h)$, for all $\ell\in\{0,2,4\}$;
         \item[(v)] $I(0\leq d(x,\rchi_i)\leq h)E(\abs{\varphi_i}^\ell|\rchi_i)\leq C, \forall \ell\in\N_0, \forall x\in S$.
	\end{enumerate}
\end{lemma}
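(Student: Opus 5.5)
The plan is to follow the proof of Lemma \ref{l1} pointwise and make every constant uniform in $x\in S$, using \textbf{H1} to replace the pointwise small-ball conditions. \textbf{H3} gives the uniform version of \textbf{A3}, \textbf{H4} the uniform version of \textbf{A5}, and \textbf{H7}(ii)--(iii) the uniform versions of \textbf{A6}(i) and \textbf{A10}. The key observation is that all constants in Lemma \ref{l1} depend only on $c_3,C_3,c_5,C_5,c'_5,C'_5,c_6,c_{10}$ and $c_0$. Under these hypotheses these constants no longer depend on $x$, so each inequality holds for every $x\in S$ with one common constant. Taking $\sup_{x\in S}$ or $\inf_{x\in S}$ on both sides then gives the statement.

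For the upper bounds (i) and (ii), I use $K\le C\,1_{[0,1]}$ (valid under either type in \textbf{A5}) and $|\beta_i|\le C_3 d(\rchi_i,x)\le C_3h$ on the support. This gives $E(K_i^q|\beta_i|^\ell)\le C^qC_3^\ell h^\ell\rphi_{x,i}(h)$ for each $x$, and bounding the right side by its supremum gives (i). Part (ii) follows in the same way, with $E(1_{B(x,h)}(\rchi_i)1_{B(x,h)}(\rchi_j))=\Psi_{x,i,j}(h)$. For the lower bounds (iii) and (iv), I take the pointwise argument. Write $E(K_iK_j\beta_i^2)\ge c_3^2h^2E(K(d_i/h)(d_i/h)^2K(d_j/h))$, and express this expectation via Fubini as an integral of $\Psi_{x,i,j}(zh,h,0,wh)$ against $\frac{d}{dz}(z^2K(z))K'(w)$ for type (II). For type (I), use $K\ge c_5$ and the first display in \textbf{A10}. \textbf{H7}(iii) then bounds this from below by $c_{10}\Psi_{x,i,j}(h)$ uniformly in $x$. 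Hence $E(K_iK_j\beta_i^2)\ge c h^2\Psi_{x,i,j}(h)\ge ch^2\inf_{x\in S}\Psi_{x,i,j}(h)$ for every $x$, and taking the infimum gives (iii).

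Part (iv) works the same way. For $\ell\in\{2,4\}$, write $E(K_i^2\beta_i^\ell)\ge c_3^\ell h^\ell E(K^2(d_i/h)(d_i/h)^\ell)$, express it through $\int\rphi_{x,i}(zh,\epsilon h)\frac{d}{dz}(z^\ell K(z))dz$, and use uniform \textbf{A6}(i). For $\ell=0$, use $K\ge c_5$ in case (I). In case (II), $K(u)\ge c'_5(1-u)$ gives $E(K_i^2)\ge c\int_0^{\epsilon_0}\rphi_{x,i}(uh)du$, and the infimum condition in \textbf{H4} then yields $c\inf_{x\in S}\rphi_{x,i}(h)$. Part (v) follows from the uniform bounds on $\gamma_{m,i}$ in \textbf{H7}(i). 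Continuity at each $x$ and the uniform constant $C_4$ bound $E(|\varphi_i|^\ell\mid\rchi_i)$ on $B(x,h)$ for $h$ small. For $\ell=1$ and $\ell=0$, use Jensen's inequality or the trivial bound.

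The main obstacle is (v), and also the requirement that the threshold ``$n$ sufficiently large'' be uniform in $x$. Continuity of $\gamma_{m,i}$ at each point does not directly give a bound on $B(x,h)$ uniform over $S$. I would first interpret \textbf{H7}(i) as equicontinuity on $S$, i.e.\ the continuity modulus is uniform. Together with compactness of $S$, this gives a single $h_0$ below which $\gamma_{m,i}\le 2C_4$ on $\bigcup_{x\in S}B(x,h)$. The ``$n$ large'' thresholds in \textbf{H7}(ii)--(iii) are assumed uniform by the phrase ``uniformly on $x\in S$''.
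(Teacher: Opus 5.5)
Your proposal is correct and follows essentially the paper's own argument. The paper omits the proof of Lemma 4, saying only that it repeats the proof of Lemma 1; your plan runs that proof for each fixed $x\in S$, with constants made uniform via \textbf{H3}, \textbf{H4} and \textbf{H7}(ii)--(iii), and then passes to $\sup_{x\in S}$ or $\inf_{x\in S}$, which is exactly that. You are also right that part (v) needs \textbf{H7}(i), which is missing from the lemma's hypothesis list. One refinement: for fixed $i$, pointwise continuity of $\gamma_{m,i}$ on the compact set $S$, together with the bound $C_4$ on $S$, already gives a common radius $h_0$ by a Lebesgue-number argument. So your equicontinuity reading of \textbf{H7}(i) is only needed to make $h_0$ uniform in $i\in[n]$ as $n$ grows, a point the paper's proof of Lemma 1(v) also glosses over.
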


\begin{lemma}\label{l5}
	Suppose that assumptions  \textup{\textbf{H1}-\textbf{H6}} and  \textup{\textbf{H7}(i)-(ii)} are fulfilled. Then for all $k\in\{0,1,2\}$, $\ell\in\{0,1\}$, it follows that
	\begin{equation*}
		cn\rphi(h)\leq\inf_{x\in S}S_{n,\ell,k}^2(x)\leq \sup_{x\in S}S_{n,\ell,k}^2(x)\leq Cn\rphi(h).
	\end{equation*}
\end{lemma}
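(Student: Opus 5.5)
The plan is to establish the uniform two-sided bound of Lemma~\ref{l5} by redoing the argument of Lemma~\ref{l3} with every constant made independent of $x\in S$. The covariance sum splits into a diagonal part and an off-diagonal part:
\begin{equation*}
S_{n,\ell,k}^2(x)=\sum_{i=1}^n \var\parent[\big]{\Lambda_i^{(k,\ell)}(x)}+\sum_{i\neq j}\abs[\big]{\cov\parent[\big]{\Lambda_i^{(k,\ell)}(x),\Lambda_j^{(k,\ell)}(x)}}.
\end{equation*}
The off-diagonal part is further split into near pairs $S_1=\{1\le |i-j|\le a_n\}$ and far pairs $|i-j|>a_n$, with the sequence $a_n$ to be chosen later.

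\textbf{Diagonal terms.} Condition on $\rchi_i$. By \textbf{H7}(i), i.e.\ \textbf{A4} holding uniformly, together with Lemma~\ref{l4}(v), we get
\begin{equation*}
E\parent[\big]{K_i^2\beta_i^{2k}\varphi_i^{2\ell}}\le C\,E\parent[\big]{K_i^2\abs{\beta_i}^{2k}}\le Ch^{2k}\sup_{x\in S}\rphi_{x,i}(h).
\end{equation*}
The second inequality is Lemma~\ref{l4}(i), and \textbf{H1} then bounds the right side by $Ch^{2k}\rphi(h)$. The squared mean $[E(K_i\beta_i^k\varphi_i^\ell)]^2$ is $O(h^{2k}\rphi(h)^2)=o(h^{2k}\rphi(h))$, using \textbf{H1} and $\rphi(h)\to0$.

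For the lower bound:
\begin{itemize}
\item when $\ell=1$, use $\gamma_{2,i}\ge\gamma_{1,i}^2>c_4^2$, which follows from Jensen's inequality and is uniform by \textbf{H7}(i);
\item then Lemma~\ref{l4}(iv) with exponent $2k\in\{0,2,4\}$ gives $E(K_i^2\beta_i^{2k}\varphi_i^{2\ell})\ge ch^{2k}\inf_{x\in S}\rphi_{x,i}(h)\ge ch^{2k}\rphi(h)$.
\end{itemize}
After dividing by $h^{2k}$, each variance lies between $c\rphi(h)$ and $C\rphi(h)$ uniformly in $x\in S$, so the diagonal sum is between $cn\rphi(h)$ and $Cn\rphi(h)$.

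\textbf{Near pairs.} For $(i,j)\in S_1$, bound the covariance by
\begin{equation*}
\abs{E(K_iK_j\beta_i^k\beta_j^k\varphi_i^\ell\varphi_j^\ell)}+\abs{E(\cdot)E(\cdot)}.
\end{equation*}
By the uniform bound on $E(\abs{\varphi_i\varphi_j}\mid\rchi_i,\rchi_j)$ and Lemma~\ref{l4}(ii) (with $|\beta|^0$ handled by Lemma~\ref{l4}(i)-type reasoning), the first term is at most $Ch^{2k}\sup_{x}\Psi_{x,i,j}(h)$. By \textbf{H6} and \textbf{H1}, $\sup_x\Psi_{x,i,j}(h)\le C\rphi(h)^{1+2p_{1,i,j}}\le C\rphi(h)^{3/2+\eta}$ for some small $\eta>0$, since $p_{1,i,j}>1/4$. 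I expect to need the minimum of the $p_1$'s bounded away from $1/4$; if the assumptions do not give this, I will use the bound $\rphi(h)^{1+2p_1}\le \rphi(h)^{3/2}$, which is what the $>1/4$ strictness alone guarantees. The product of means is $O(h^{2k}\rphi(h)^2)$. Lemma~\ref{l2} then bounds the whole near sum by $Cna_n\rphi(h)^{3/2}$.

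\textbf{Far pairs.} Apply Davydov's covariance inequality for bounded variables, or the moment version with $\varphi$-moments uniformly bounded by \textbf{H7}(i). This gives
\begin{equation*}
\abs{\cov}\le C h^{0}\,\alpha(|i-j|)^{1-2/p}\,\rphi(h)^{2/p},
\end{equation*}
uniformly in $x$. The factor $h^{k}$ cancels through the normalisation, since $\abs{\beta_i}\le C_3h$ on the support of $K_i$ by \textbf{H3}. Under the geometric mixing of \textbf{H5}, the far sum is at most $Cn\,t^{ca_n}$ times a power of $\rphi(h)$.

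\textbf{Choice of $a_n$ (main obstacle).} We need $a_n\rphi(h)^{1/2}\to0$ and $t^{ca_n}=o(\rphi(h))$ at the same time. The natural choice is $a_n=C'\ln n$ with $C'$ large. Then $t^{ca_n}\le n^{-2}$, while \textbf{H5} gives $\rphi(h)\ge(\ln n)^{3/2}n^{-\Delta_1/2}$, which is polynomially large compared with $n^{-2}$. For the first requirement one needs $\rphi(h)^{1/2}\ln n\to0$, which holds whenever $\rphi(h)$ decays polynomially. If $\rphi(h)$ decays only logarithmically, the near-pair bound has to be refined using the exponent gap from \textbf{H6}. This balancing is the delicate step.

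With it done, both off-diagonal pieces are $o(n\rphi(h))$ uniformly in $x$. The variance sum then dominates, and the claimed bounds $cn\rphi(h)\le\inf_{x\in S}S_{n,\ell,k}^2(x)\le\sup_{x\in S}S_{n,\ell,k}^2(x)\le Cn\rphi(h)$ follow. Uniformity in $x$ holds throughout because every constant used comes from \textbf{H1}, \textbf{H6}, \textbf{H7} and Lemma~\ref{l4}, all of which are stated uniformly over $S$.
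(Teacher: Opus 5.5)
Your decomposition is the paper's: variances, near pairs $1\le|i-j|\le a_n$ handled with \textbf{H6} and Lemma~\ref{l2}, and far pairs handled with Davydov's inequality and the geometric mixing of \textbf{H5}. However, the step you flag as the main obstacle is a genuine gap, and your proposed remedy cannot close it.

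With $a_n=C'\ln n$, the near-pair sum is bounded by $Cn\ln n\,\rphi(h)^{1+2p_{\min}}$. Since $1+2p_{1,i,j}\le 2$, the best bound \textbf{H6} can ever give you is $Cn\ln n\,\rphi(h)^2$. That is $O(n\rphi(h))$ only if $\rphi(h)=O(1/\ln n)$, even in the most favourable case $p_{1,i,j}=1/2$. Nothing in \textbf{H1}, \textbf{H5} or \textbf{H8} forces $\rphi(h)$ to decay like a power of $n$: \textbf{H5} is only a lower bound, and \textbf{H1} only gives $\rphi(h)\to0$. For instance, $\rphi(h)\asymp 1/\ln\ln n$ is admissible, and then your near-pair bound is much larger than $n\rphi(h)$. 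So the exponent gap in \textbf{H6} cannot rescue a cut-off that grows like $\ln n$.

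The missing idea is to tie the cut-off to $\rphi(h)$ rather than to $n$; the paper takes $a_n=\bar C\rphi(h)^{-1/2}$.
\begin{itemize}
\item The near-pair sum is then $O(na_n\rphi(h)^{3/2})=O(n\rphi(h))$.
\item The far-pair sum is at most $Cn\rphi(h)^{2/q}t^{(1-2/q)\bar C\rphi(h)^{-1/2}}=o(n\rphi(h))$. This holds because $t^{c\rphi(h)^{-1/2}}$ vanishes faster than any power of $\rphi(h)$, however slowly $\rphi(h)\to0$.
\item Lemma~\ref{l2} applies because $a_n=O(n^{\Delta_1/4})=o(n)$ by \textbf{H5}.
\end{itemize}

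You also impose more than you need. Since $S^2_{n,\ell,k}(x)$ is a sum of \emph{absolute} covariances, $S^2_{n,\ell,k}(x)\ge\sum_{i}\var(\Lambda_i^{(k,\ell)}(x))$. Your diagonal estimate therefore already gives $\inf_{x\in S}S^2_{n,\ell,k}(x)\ge cn\rphi(h)$ directly. The off-diagonal parts only need to be $O(n\rphi(h))$, not $o(n\rphi(h))$, so $a_n\rphi(h)^{1/2}=O(1)$ is enough. If you do want $o$, $a_n=\rphi(h)^{-1/4}$ works as well. The rest of your argument goes through as written.
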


Now, define for all $i\in[n], \ \ell\in\{0,1\}$ and $x\in \F$, the random variable
$$T_i^\ell(x)=r_n \frac{\abs{\varphi_i}^\ell}{h}1_{B(x,h)\cup B(x_{j(x)},h)}(\rchi_i),$$
with $j(x)=\argmin_{j\in[N_{r_n}(S)]}d(x,x_{j})$. Moreover, let $M_i^\ell(x)=T_i^\ell(x)-ET_i^\ell(x)$ and $W_{n,\ell}^2(x)=\sum_{i,j=1}^n \abs{\cov (M_i^\ell(x),M_j^\ell(x))}$.
\begin{lemma}\label{l6}
	Suppose that the assumptions  \textup{\textbf{H1}, \textbf{H5}, \textbf{H6}} and  \textup{\textbf{H7}(i)} hold. Then for $l\in\{0,1\}$, it follows that
	\begin{equation*}
		c\frac{r_n^2}{h^2}n\rphi(h)\leq\inf_{x\in S} W^2_{n,\ell}(x)\leq \sup_{x\in S} W^2_{n,\ell}(x)\leq C\frac{r_n^2}{h^2}n\rphi(h),
	\end{equation*}
	and 
	\begin{equation*}
		\sup_{x\in S} \max_{i\in[n]}ET^\ell_i(x)=O \parent[\bigg]{\frac{r_n}{h}\rphi(h)}.
	\end{equation*}
\end{lemma}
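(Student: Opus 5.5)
The plan is to control $ET_i^\ell(x)$ directly, and to split $W^2_{n,\ell}(x)$ into a diagonal part (variances) and an off-diagonal part (covariances). The off-diagonal part is further divided into "near" pairs $1\le|i-j|\le a_n$ and "far" pairs $|i-j|>a_n$, for a sequence $a_n\to\infty$ chosen at the end. Every constant must be uniform in $x\in S$. This is what \textbf{H1}, \textbf{H6} and \textbf{H7}(i) provide, together with the fact that $x_{j(x)}\in S$ and $d(x,x_{j(x)})<r_n$.

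\emph{Second claim and diagonal terms.} Conditioning on $\rchi_i$ and using the uniform bound of Lemma \ref{l4}(v) (i.e. \textbf{H7}(i)) gives
\[
ET_i^\ell(x)\le C\frac{r_n}{h}\bigl[\rphi_{x,i}(h)+\rphi_{x_{j(x)},i}(h)\bigr]\le C\frac{r_n}{h}\rphi(h)
\]
by \textbf{H1}, uniformly in $x$ and $i$; this is the second claim. In the same way,
\[
E(T_i^\ell)^2\le C\frac{r_n^2}{h^2}\rphi(h).
\]
For the lower bound, drop the ball around $x_{j(x)}$. For $\ell=0$ the bound $E(T_i^0)^2\ge \frac{r_n^2}{h^2}\rphi_{x,i}(h)\ge c\frac{r_n^2}{h^2}\rphi(h)$ is immediate. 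For $\ell=1$, use Jensen, $E(\varphi_i^2\mid\rchi_i)\ge \gamma_{1,i}(\rchi_i)^2$. By the continuity of $\gamma_{1,i}$ at $x$ and $\gamma_{1,i}(x)>c_4$, this is at least $c_4^2/4$ on $B(x,h)$ for $n$ large, uniformly by \textbf{H7}(i). Since $(ET_i^\ell)^2=O(r_n^2h^{-2}\rphi(h)^2)=o(r_n^2h^{-2}\rphi(h))$ because $\rphi(h)\to0$, we get $\var(M_i^\ell)=\Theta(r_n^2h^{-2}\rphi(h))$ uniformly. Summing over $i$ gives the diagonal part $\Theta(nr_n^2h^{-2}\rphi(h))$. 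Because $W^2_{n,\ell}$ is a sum of absolute values, this already yields the uniform lower bound. It remains to show that the off-diagonal part is $o(nr_n^2h^{-2}\rphi(h))$ uniformly.

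\emph{Near pairs.} Use $|\cov(M_i,M_j)|\le E(T_iT_j)+ET_iET_j$. Conditioning on $(\rchi_i,\rchi_j)$ and using the bound on $E(|\varphi_i\varphi_j|\mid\rchi_i,\rchi_j)$ in \textbf{H7}(i), $E(T_iT_j)$ is at most $Cr_n^2h^{-2}$ times a sum of four joint probabilities $\Psi_{x',x'',i,j}(h)$ with $x',x''\in\{x,x_{j(x)}\}\subset S$. By \textbf{H6} and $p_{1,i,j}>1/4$, each is at most $C\rphi(h)^{1+2p_{1,i,j}}\le C\rphi(h)^{3/2}$. With Lemma \ref{l2}, the near sum is $O(na_nr_n^2h^{-2}\rphi(h)^{3/2})$.

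\emph{Far pairs.} I would use Davydov's covariance inequality with some $p>2$:
\[
|\cov(M_i,M_j)|\le C\alpha(|i-j|)^{1-2/p}\|M_i\|_p\|M_j\|_p .
\]
Here $\|M_i\|_p\le C\frac{r_n}{h}\rphi(h)^{1/p}$ by the uniform bound on conditional moments of all orders. The geometric mixing in \textbf{H5} then bounds the far sum by $Cn\,r_n^2h^{-2}\rphi(h)^{2/p}\,t^{(1-2/p)a_n}$ up to a constant.

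\emph{Choice of $a_n$.} Take $a_n=\rphi(h)^{-1/4}\to\infty$. The near sum becomes $O(nr_n^2h^{-2}\rphi(h)^{5/4})$. In the far sum, $t^{c\rphi(h)^{-1/4}}$ decays faster than any power of $\rphi(h)$, so $\rphi(h)^{2/p-1}t^{c\rphi(h)^{-1/4}}\to0$. Both off-diagonal pieces are therefore $o(nr_n^2h^{-2}\rphi(h))$ uniformly in $x$, which gives the upper bound.

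The main obstacle is this off-diagonal balance. It requires the near pairs to exploit $p_{1,i,j}>1/4$ from \textbf{H6}, and the far pairs to exploit the geometric mixing rate, with all bounds uniform over the data-dependent centre $x_{j(x)}$.
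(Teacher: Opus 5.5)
Your proposal is correct and follows essentially the same route as the paper's proof. The paper also splits $W^2_{n,\ell}$ into variances, near pairs and far pairs. It bounds the near pairs through \textbf{H7}(i) and \textbf{H6} after dominating $1_{B(x,h)\cup B(x_{j(x)},h)}(\rchi_i)\,1_{B(x,h)\cup B(x_{j(x)},h)}(\rchi_j)$ by four joint ball probabilities, handles the far pairs with Davydov's inequality under geometric mixing, and reads the lower bound off the diagonal variances. The differences are minor. You take $a_n=\rphi(h)^{-1/4}$ where the paper takes $a_n=C_1\rphi(h)^{-1/2}$; both work because $p_{1,i,j}>1/4$. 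You also make explicit the Jensen step $E(\varphi_i^2\mid\rchi_i)\ge\gamma_{1,i}(\rchi_i)^2$ needed for the $\ell=1$ lower bound, which the paper leaves implicit.
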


\paragraph{Propositions}
\vspace{1em}
Let $m_\ell(x)=(1/\Gamma(x))\sum_{i\neq j}^n w_{i,j}(x) \varphi_j^\ell$ where $\Gamma(x)=\sum_{i\neq j}^n E(w_{i,j}(x))$, for $\ell\in\N$. Moreover, denote  $p_{\min}=\min_{(i,j)\in[n]^2}p_{1,i,j}$ and $p_{\max}=\max_{(i,j)\in[n]^2}p_{2,i,j}$.

\begin{prop}\label{p1}
	Suppose that assumptions  \textup{\textbf{A1}-\textbf{A3}, \textbf{A5}, \textbf{A6}(ii), \textbf{A7}, \textbf{A9}} and  \textup{\textbf{A10}} hold. Then
	\begin{equation*}
		m_\varphi (x)-E(m_1(x))=O(h^b).
	\end{equation*}	
\end{prop}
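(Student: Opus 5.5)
The plan is to write the bias as a weighted average of local increments of $m_\varphi$ and then use the fact that only observations with $d(x,\rchi_j)\le h$ receive positive weight. By definition,
\[
E(m_1(x))=\frac{1}{\Gamma(x)}\sum_{i\neq j}E\bigl(w_{i,j}(x)\varphi_j\bigr).
\]
I will condition on $(\rchi_i,\rchi_j)$. Since $\epsilon_j$ has mean zero and is independent of $\rchi_i$ for $i\neq j$, I will take $E(\varphi_j\mid\rchi_i,\rchi_j)=m_\varphi(\rchi_j)$; this is the same modelling convention that defines $m_\varphi$ as $E(\varphi(Y_j)\mid\rchi_j=\cdot)$. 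It then follows that
\[
m_\varphi(x)-E(m_1(x))=\frac{1}{\Gamma(x)}\sum_{i\neq j}E\bigl(w_{i,j}(x)\,[m_\varphi(x)-m_\varphi(\rchi_j)]\bigr).
\]
By \textbf{A5}, $K$ is supported in $[0,1]$, so $w_{i,j}\neq0$ only when both $\rchi_i,\rchi_j\in B(x,h)$. On that event \textbf{A2} gives $|m_\varphi(x)-m_\varphi(\rchi_j)|\le C_2h^b$. Hence
\[
|m_\varphi(x)-E(m_1(x))|\le C h^b\,\frac{\sum_{i\neq j}E|w_{i,j}|}{\Gamma(x)},
\]
and it suffices to show that $\sum_{i\neq j}E|w_{i,j}|=O(\Gamma(x))$.

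For the numerator I bound $|w_{i,j}|\le (\beta_i^2+|\beta_i\beta_j|)K_iK_j$. Lemma \ref{l1}(ii) then gives $E|w_{i,j}|\le Ch^2\Psi_{x,i,j}(h)$. For the denominator I write
\[
E(w_{i,j})=E(K_iK_j\beta_i^2)-E(K_iK_j\beta_i\beta_j).
\]
Lemma \ref{l1}(iii), which uses \textbf{A10}, bounds the first term below by $c^*h^2\Psi_{x,i,j}(h)$. For the cross term I use \textbf{A6}(ii). The kernel weights are bounded, so it has the form $h^{-2}\,o\bigl(E(K_iK_j\beta_i^2\beta_j^2)\bigr)$, and by Lemma \ref{l1}(ii) together with \textbf{A3} this is $o(h^2\Psi_{x,i,j}(h))$. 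If the kernel factors cause trouble here, I will handle them by the same integration-by-parts representation used for \textbf{A6}/\textbf{A10}. The conclusion is $E(w_{i,j})\ge c\,h^2\Psi_{x,i,j}(h)$ for $n$ large, uniformly in $(i,j)$.

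The main obstacle is that uniformity. The $o(\cdot)$ in \textbf{A6}(ii) must hold uniformly over pairs, or at least summably, so that it can be absorbed into $c^*$. If it does not, I will fall back on the two-sided bounds of \textbf{A9} together with \textbf{A7} to compare the magnitudes of $\Psi_{x,i,j}(h)$ across pairs. With uniformity in hand, summing over $i\neq j$ gives $\Gamma(x)\ge c h^2\sum_{i\neq j}\Psi_{x,i,j}(h)$, so the ratio above is bounded by $C/c$, and therefore $m_\varphi(x)-E(m_1(x))=O(h^b)$.
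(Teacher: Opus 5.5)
Your proposal matches the paper's proof to the paper's own level of rigor and takes essentially the same route. You get the lower bound $E(w_{i,j})\ge c\,h^2\Psi_{x,i,j}(h)$ from Lemma~\ref{l1}(iii) and \textbf{A6}(ii), replace $\varphi_j$ by $m_\varphi(\rchi_j)$ by conditioning on $(\rchi_i,\rchi_j)$, apply \textbf{A2} on $B(x,h)$, and bound $\sum_{i\neq j}E|w_{i,j}|/\Gamma(x)$ by $\bigl(C\sum_{i\neq j}\Psi_{x,i,j}(h)\bigr)/\bigl(c\sum_{i\neq j}\Psi_{x,i,j}(h)\bigr)$.

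There are two minor differences. First, you bound $E|w_{i,j}|$ directly by the triangle inequality and the trivial $\ell_2=0$ extension of Lemma~\ref{l1}(ii), where the paper writes $|w_{i,j}|=w_{i,j}+2w_{i,j}^-$. Second, the uniformity in $(i,j)$ of the $o(\cdot)$ in \textbf{A6}(ii), and its transfer to kernel-weighted moments, both of which you rightly flag, are simply taken for granted by the paper. Your \textbf{A9}/\textbf{A7} fallback would not supply that uniformity, so just read \textbf{A6}(ii) as holding uniformly over pairs.
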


\noindent\textbf{Proof of Proposition \ref{p1}}
Assumption \textbf{A6}(ii) implies that 
\begin{equation*}
h^2E(K_jK_i\beta_i\beta_j)=o\parent[\bigg]{\iint_{B(x,h)^2}\beta(u,x)^2\beta(v,x)^2dP_{i,j}(u,v)}=o\parent[\big]{h^4\Psi_{x,i,j}(h)}.
\end{equation*}
 Then, using Lemma 1(iii) and \textbf{A9},
	\begin{align}\notag
		 E(w_{i,j}(x))&= E(K_jK_i\beta_i^2)- E(K_jK_i\beta_i\beta_j)\\\notag
		&> c^* h^2 \Psi_{x,i,j}(h)-c h^2\Psi_{x,i,j}(h)=h^2 \Psi_{x,i,j}(h)(c^*-c)\\\label{eq20_}
		&>ch^2 \Psi_{x,i,j}(h)>0,
	\end{align}
	for some $c^*>0$, all $n$ sufficiently large and $c>0$ chosen small enough. 
 
By hypothesis, $\rchi_i$ is independent of $\epsilon_j$ and $E(\epsilon_j)=0$, $\forall i,j\in[n]$. Moreover, the regression function  $m_\varphi(\rchi_j)$ is $\sigma(\rchi_j,\rchi_i)$-measurable since $\sigma(\rchi_j)\subseteq \sigma(\rchi_j,\rchi_i), \forall i,j$. Then
 \begin{equation}\label{eq_20_}
     E(\varphi_j|(\rchi_i,\rchi_j))=E(m_\varphi(\rchi_j)+\epsilon_j|(\rchi_i,\rchi_j))=m_\varphi(\rchi_j)+E(\epsilon_j|(\rchi_i,\rchi_j))=m_\varphi(\rchi_j), \quad \forall i,j\in[n].
 \end{equation}
 Given a random variable $\rchi$, define its positive and negative parts by $\rchi^+\coloneqq \max(\rchi,0)$ and $\rchi^-\coloneqq \max(-\rchi,0)$, respectively. 
 Then, for all $n$ sufficiently large, we use the Law of Iterated Expectations, \eqref{eq20_}, \eqref{eq_20_} and \textbf{A2}  to obtain that
	\begin{align}\notag
		m_\varphi(x)-E(m_1(x))&=\frac{\Gamma(x)}{\Gamma(x)}m_\varphi(x)-\frac{1}{\Gamma(x)}\sum_{i\neq j}E\parent[\big]{w_{i,j}(x)\varphi_j}\\\notag
        &=\frac{1}{\Gamma(x)}\sum_{i\neq j}E\parent[\big]{w_{i,j}(x)m_\varphi(x)}-\frac{1}{\Gamma(x)}\sum_{i\neq j}E\parent[\big]{w_{i,j}(x)E\parent[\big]{\varphi_j|(\rchi_i,\rchi_j)}}
        \\\notag
		&=\frac{1}{\Gamma(x)}\sum_{i\neq j}E\colc[\big]{w_{i,j}(x)\parent[\big]{m_\varphi(x)-m_{\varphi}(\rchi_j)}}\\\notag
		&\leq \frac{1}{\Gamma(x)}\sum_{i\neq j} E\abs{w_{i,j}(x)}  \sup_{x'\in B(x,h)}\abs{m_\varphi(x)-m_{\varphi}(x')}\\\notag
		&\leq Ch^b \frac{\sum_{i\neq j}E\abs{w_{i,j}(x)}}{\sum_{i\neq j}E\parent{w_{i,j}(x)}}\\\notag
		&=Ch^b \frac{\sum_{i\neq j}E\parent[\big]{\abs{w_{i,j}(x)}-w_{i,j}(x)+w_{i,j}(x)}}{\sum_{i\neq j}E\parent{w_{i,j}(x)}}\\\notag 
		&=Ch^b \chave[\bigg]{\frac{\sum_{i\neq j}E\parent[\big]{\abs{w_{i,j}(x)}-w_{i,j}(x)}}{\sum_{i\neq j}E\parent{w_{i,j}(x)}}+1}\\\notag
    &=Ch^b \chave[\bigg]{\frac{\sum_{i\neq j}E\parent[\big]{w_{i,j}(x)^+ + w_{i,j}(x)^- -w_{i,j}(x)^+ +w_{i,j}(x)^-}}{\sum_{i\neq j}E\parent{w_{i,j}(x)}}+1}\\\label{eq21_}
    &\leq Ch^b \frac{\sum_{i\neq j}2E\parent[\big]{w_{i,j}(x)^-}}{\sum_{i\neq j}E\parent{w_{i,j}(x)}}.
	\end{align}
	Observe  that 
 \begin{align*}
     \{\omega\in\Omega: (K_iK_j(\beta_i-\beta_j)\beta_i)(\omega)<0\}
     &\subseteq \{\omega\in\Omega: \beta_j(\omega)>\beta_i(\omega)>0\}\\
     &\qquad \cup \{\omega\in\Omega: \beta_j(\omega)<\beta_i(\omega)<0\}.
 \end{align*}
 Then, using Lemma 1(ii),
	\begin{align}\notag
		E\parent[\big]{w_{i,j}^-}&=E\parent[\big]{-w_{i,j}I(w_{i,j}<0)}= E((K_i K_j \beta_j\beta_i-K_i K_j \beta_i^2)I(w_{i,j}<0))\\\notag
		&\leq  E\chave[\big]{(K_i K_j \beta_j\beta_i)I(w_{i,j}<0)(I(\beta_j>\beta_i>0)+I(\beta_j<\beta_i<0))}\\\notag
        &=\abs{E\parent[\big]{K_iK_j\beta_i\beta_jI(w_{i,j}<0)}}\\\label{eq_22_}
        &\leq \abs{E\parent[\big]{K_iK_j\beta_i\beta_j}}\leq C h^2\Psi_{x,i,j}(h).
	\end{align}
	Combining \eqref{eq20_} and \eqref{eq_22_}, we obtain that
	\begin{equation}\label{eq23_}
		\frac{\sum_{i\neq j}E\parent[\big]{w_{i,j}^-}}{\sum_{i\neq j}E\parent[\big]{w_{i,j}}}\leq C	\frac{\sum_{i\neq j}\Psi_{x,i,j}(h)}{\sum_{i\neq j}\Psi_{x,i,j}(h)}=C,
	\end{equation}
	for all $n$ sufficiently large. It is immediate from \eqref{eq21_} and \eqref{eq23_} that $	m_\varphi(x)-E(m_1(x))=O(h^b)$. 
    
    \qed

\begin{remark}
	Note that Lemmas 1 and 4 of \cite{leulmi} are proved using the same arguments as \cite{barrientos}. However, the proof of the latter authors is based on the equality $E(w_{i,j})=E(w_{1,2}), \forall i,j\in[n],$ which holds for independent and identically distributed data but is not at all obvious for dependent and identically distributed data.
\end{remark}

\begin{prop}\label{p2}
	Let the conditions \textbf{H1}-\textbf{H4}, \textbf{H6}, \textbf{H7}(ii) and \textbf{H7}(iii) hold. Then
	\begin{equation*}
		\sup_{x\in S}\abs{m_\varphi (x)-E(m_1(x))}=O(h^b).
	\end{equation*}	
\end{prop}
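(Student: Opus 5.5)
The plan is to repeat the proof of Proposition \ref{p1} verbatim, but to take every constant uniformly in $x\in S$. Lemma 4 replaces Lemma 1, and \textbf{H1}, \textbf{H6} replace \textbf{A7}, \textbf{A9}.

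\emph{Step 1: a uniform lower bound for $E(w_{i,j}(x))$.} \textbf{H7}(ii) makes \textbf{A6}(ii) hold uniformly on $S$. Combined with \textbf{H3} (uniform \textbf{A3}), this gives
\begin{equation*}
\sup_{x\in S}\frac{h^2\abs{E(K_iK_j\beta_i\beta_j)}}{h^4\Psi_{x,i,j}(h)}\to 0 .
\end{equation*}
Lemma 4(iii), or more precisely the pointwise version of its argument with a constant $c^*$ independent of $x$ coming from \textbf{H3}, \textbf{H4} and \textbf{H7}(iii), gives $E(K_iK_j\beta_i^2)>c^*h^2\Psi_{x,i,j}(h)$ for all $x\in S$. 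Hence $E(w_{i,j}(x))>ch^2\Psi_{x,i,j}(h)>0$ uniformly in $x\in S$ and $i\neq j$, for $n$ large. Positivity of $\Psi_{x,i,j}(h)$ follows from the lower bound in \textbf{H6} together with \textbf{H1}.

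\emph{Step 2: the bias decomposition.} As in \eqref{eq_20_}, $E(\varphi_j\mid(\rchi_i,\rchi_j))=m_\varphi(\rchi_j)$, so
\begin{equation*}
m_\varphi(x)-E(m_1(x))=\Gamma(x)^{-1}\sum_{i\neq j}E\colc[\big]{w_{i,j}(x)\parent[\big]{m_\varphi(x)-m_\varphi(\rchi_j)}}.
\end{equation*}
On the support of $K_j$ we have $\rchi_j\in B(x,h)$. By \textbf{H2}, whose constant does not depend on $x_1\in S$, the factor $\abs{m_\varphi(x)-m_\varphi(\rchi_j)}$ is at most $Ch^b$ uniformly in $x\in S$.

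\emph{Step 3: controlling the ratio.} As in \eqref{eq21_}, this reduces the claim to bounding $\sup_{x\in S}\sum_{i\neq j}E(w_{i,j}^-)/\sum_{i\neq j}E(w_{i,j})$. The same set inclusion as before gives $E(w_{i,j}^-)\le \abs{E(K_iK_j\beta_i\beta_j)}\le Ch^2\Psi_{x,i,j}(h)$ pointwise in $x$, with $C$ uniform by Lemma 4(ii) and \textbf{H3}. Together with Step 1, the ratio is at most $C$ for every $x\in S$. Taking the supremum gives $\sup_{x\in S}\abs{m_\varphi(x)-E(m_1(x))}\le Ch^b$.

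\emph{Main obstacle.} Lemma 4 is stated with $\sup_x$ and $\inf_x$ on both sides. Using it directly would produce a ratio $\sup_x\Psi/\inf_x\Psi$, and this must be shown bounded. I would handle this through \textbf{H6} and \textbf{H1}: for each $x\in S$,
\begin{equation*}
c\rphi(h)^{1+2p_{2,i,j}}\le\Psi_{x,i,j}(h)\le C\rphi(h)^{1+2p_{1,i,j}} .
\end{equation*}
The exponents differ, so this bound alone is not enough. The preferable route is the one above: compare numerator and denominator at the same $x$, term by term in $(i,j)$, where both are bounded by multiples of the same $\Psi_{x,i,j}(h)$. This needs the pointwise inequalities behind Lemma 4(ii) and (iii) to hold with constants uniform in $x$. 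That uniformity is exactly what \textbf{H3}, \textbf{H4} and \textbf{H7} supply, since the proofs of Lemma 1(ii) and (iii) only use the constants in \textbf{A3}, \textbf{A5} and \textbf{A10}.
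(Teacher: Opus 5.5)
Your proposal is correct and follows the paper's own route: the paper only says that Proposition 2 is proved along the lines of Proposition 1. Your insistence on comparing $\sum_{i\neq j}E(w_{i,j}^-)$ with $\sum_{i\neq j}E(w_{i,j})$ at the same $x$ is exactly the care that phrase leaves implicit; you use the pointwise bounds behind Lemma 4(ii)--(iii), with constants made uniform by \textbf{H3}, \textbf{H4} and \textbf{H7}(iii), rather than the literal $\sup/\inf$ form of Lemma 4, which would leave the uncontrolled ratio $\sup_x\Psi_{x,i,j}(h)/\inf_x\Psi_{x,i,j}(h)$. One small repair, inherited from the paper's bound on $E(w_{i,j}^-)$: in your chain $E(w_{i,j}^-)\le\abs{E(K_iK_j\beta_i\beta_j)}\le Ch^2\Psi_{x,i,j}(h)$ the middle term should be $E(K_iK_j\abs{\beta_i\beta_j})$, because $K_iK_j\beta_i\beta_j$ changes sign; that is precisely the quantity Lemma 4(ii) bounds by $Ch^2\Psi_{x,i,j}(h)$, so your conclusion is unaffected.
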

The proof follows along the same lines as the proof of Proposition 1, and thus omitted.

\begin{prop}\label{p3}
	If the assumptions \textbf{A1}-\textbf{A10} hold, then
	\begin{equation}\label{eq24_}
		m_1 (x)-Em_1(x)=\Oac\parent[\bigg]{\sqrt{\frac{\ln n}{n\rphi_x(h)^{4p_\text{max}-1}}}},
	\end{equation}
	and 
	\begin{equation}\label{eq25_}
		m_0 (x)-1=\Oac\parent[\bigg]{\sqrt{\frac{\ln n}{n\rphi_x(h)^{4p_\text{max}-1}}}}.
	\end{equation}
	If \textbf{A6}(i) is excluded, then
	\begin{equation}\label{eq26_}
		m_1 (x)-Em_1(x)=O_p\parent[\bigg]{\sqrt{\frac{\ln n}{n\rphi_x(h)^{4p_\text{max}-1}}}},
	\end{equation}
	and 
	\begin{equation}\label{eq27_}
		m_0 (x)-1=O_p\parent[\bigg]{\sqrt{\frac{\ln n}{n\rphi_x(h)^{4p_\text{max}-1}}}}.
	\end{equation}
\end{prop}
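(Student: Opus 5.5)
We decompose $m_\ell(x)$, $\ell\in\{0,1\}$, into sums of products of centered kernel moments. We then apply a Fuk--Nagaev type exponential inequality for strongly mixing sequences to each centered sum.

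\textbf{Step 1 (decomposition).} Since $w_{i,i}=0$, the sum over $i\neq j$ can be written with all indices. Expanding $w_{i,j}$ gives
\[
\sum_{i,j}w_{i,j}\varphi_j^\ell=\Big(\sum_i K_i\beta_i^2\Big)\Big(\sum_j K_j\varphi_j^\ell\Big)-\Big(\sum_i K_i\beta_i\Big)\Big(\sum_j K_j\beta_j\varphi_j^\ell\Big).
\]
Write $\sum_i K_i\beta_i^k\varphi_i^\ell=h^k\sum_i\Lambda_i^{(k,\ell)}+\sum_iE(K_i\beta_i^k\varphi_i^\ell)$. Expanding each product gives three kinds of terms: deterministic products, which reproduce $E m_\ell$ up to the diagonal correction $\sum_i E(K_i\beta_i^2)E(K_i\varphi_i^\ell)$ and covariance corrections; cross terms of the form $(\text{mean})\times\sum\Lambda$; and products $\sum\Lambda\cdot\sum\Lambda$. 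Everything is divided by $\Gamma(x)$. By \eqref{eq20_} and \textbf{A9}, \textbf{A7}, $\Gamma(x)\ge c h^2 n^2\rphi_x(h)^{1+2p_{\max}}$. Lemma~\ref{l1}(i) and \textbf{A7} bound each mean sum by $Cnh^k\rphi_x(h)$.

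\textbf{Step 2 (exponential bound).} For each $k\in\{0,1,2\}$ and $\ell\in\{0,1\}$, I will show
\[
\sum_i\Lambda_i^{(k,\ell)}=\Oac\big(\sqrt{n\rphi_x(h)\ln n}\big).
\]
The tool is the Fuk--Nagaev inequality for arithmetically $\alpha$-mixing variables (Rio's version, as in \cite{ferraty}), applied with $r=(\ln n)^2$ and $\varepsilon=\eta\sqrt{n\rphi_x(h)\ln n}$. The variance term is $S_{n,\ell,k}^2(x)=O(n\rphi_x(h))$ by Lemma~\ref{l3}, and this controls the exponential piece. The summands are bounded because $K\le C$ and $|\beta_i|\le Ch$; an unbounded $\varphi$ is handled by truncation at a level $n^{\tau}$, using the moment bounds in \textbf{A4} and Lemma~\ref{l1}(v). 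The polynomial piece is $n r^{-1}(r/\varepsilon)^{a+1}$. Its summability is exactly what the condition $\rphi_x(h)^{2(a+1)}\ge(\ln n)^{3(a+1)}n^{-\Delta}$ with $\Delta<\delta$ in \textbf{A8} is designed to give: after substitution the bound is of order $n^{1-(a+1)/2}\cdots$, which together with $\alpha(n)\le Cn^{-(3+\delta)}$ yields $\sum_n<\infty$. I expect this calibration of $r$, $\eta$ and the truncation level against \textbf{A8} to be the main obstacle.

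\textbf{Step 3 (assembly).} A typical cross term is
\[
\frac{nh^2\rphi_x(h)\cdot\sqrt{n\rphi_x(h)\ln n}}{h^2n^2\rphi_x(h)^{1+2p_{\max}}}=\sqrt{\frac{\ln n}{n\rphi_x(h)^{4p_{\max}-1}}},
\]
which is the target rate. The $\Lambda\Lambda$ products are of order $n\rphi_x(h)\ln n/(n^2\rphi_x(h)^{1+2p_{\max}})$, which is the square of this rate and hence negligible, since $\ln n/(n\rphi_x(h)^{4p_{\max}-1})\to0$ by the remark after \textbf{A8}. The diagonal correction is $O(nh^2\rphi_x(h)^2/\Gamma)$, which is at most the target rate because $p_{\max}<3/4$. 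This gives \eqref{eq24_}. For $\ell=0$ we have $Em_0=1$, so the same argument gives \eqref{eq25_}.

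For the $O_p$ versions, \textbf{A6}(i) is needed only to make the lower bound in Lemma~\ref{l3} hold, and that lower bound enters the almost complete argument. Without \textbf{A6}(i), I will instead use Chebyshev's inequality with the upper bound $S^2_{n,\ell,k}=O(n\rphi_x(h))$ from \eqref{eq17_}. This gives $\sum_i\Lambda_i^{(k,\ell)}=O_p(\sqrt{n\rphi_x(h)})$. Step 3 then goes through unchanged, yielding \eqref{eq26_} and \eqref{eq27_}, with the slack $\sqrt{\ln n}$ left over.
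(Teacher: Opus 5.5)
Your proposal is correct and follows the same skeleton as the paper's proof. The shared steps are: the factorisation of $\sum_{i,j}w_{i,j}\varphi_j^\ell$ into products of kernel sums (the paper normalises them as $S_{q,\ell}$ and $Q$); the bound $\Gamma(x)\ge c\,n^2h^2\rphi_x(h)^{1+2p_{\max}}$ from \eqref{eq20_}, \textbf{A7} and \textbf{A9}, which produces the factor $\rphi_x(h)^{1-2p_{\max}}$; Fuk--Nagaev with free parameter $(\ln n)^2$ and deviation $\eta\sqrt{n\rphi_x(h)\ln n}$; and Chebyshev for the $O_p$ part.

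Two steps differ.

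\textbf{Truncation.} The paper does not truncate $\varphi$. Rio's inequality only needs a tail bound $P(|Z_i|>t)\le t^{-p}$. The paper gets this from Markov's inequality and $E|\Lambda_i^{(k,\ell)}|^p=O(\rphi_x(h))$, taking $p$ large so that $p(a+1)/(a+p)$ is close to $a+1$. Your truncation at $n^{\tau}$ also works: the polynomial term gains a factor at most $n^{\tau(a+1)}$, which the gap between $\Delta$ and $\delta$ in \textbf{A8} absorbs for small $\tau$. It is just extra work.

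\textbf{Exponential term.} You control it with the upper bound $S^2_{n,\ell,k}(x)=O(n\rphi_x(h))$ alone, and this is legitimate. Indeed $y=\varepsilon^2/(rS^2_{n,\ell,k}(x))\ge \eta^2/(C\ln n)$, and monotonicity of $(1+y)^{-r/2}$ together with $\ln(1+y)\ge y/(1+y)$ gives a bound of order $n^{-\eta^2/(4C)}$. The paper instead writes this term as $(1-t_n)^{v/2}$ and uses the lower bound in Lemma~\ref{l3} to get $t_n\to 0$ and $\ln(1-t_n)\sim -t_n$. That lower bound is where \textbf{A6}(i) enters, and the elementary inequality $\ln(1-t)\le -t$ makes it unnecessary.

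Consequently, your closing remark that the lower bound enters the almost complete argument does not describe your own proof. Your Step 2 yields the $\Oac$ rates without \textbf{A6}(i), and the $O_p$ rates then follow directly.

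\textbf{Small omission in Step 3.} You should bound the deterministic term $h^{2}E\bigl(\sum_i\Lambda_i^{(2,0)}\sum_j\Lambda_j^{(0,\ell)}\bigr)/\Gamma(x)$ explicitly; these are your covariance corrections. Cauchy--Schwarz and Lemma~\ref{l3} give $O\bigl(1/(n\rphi_x(h)^{2p_{\max}})\bigr)$, which is negligible against the target rate.
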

\noindent\textbf{Proof of Proposition \ref{p3}}
	We first prove \eqref{eq24_}. Following \cite{barrientos}, set 
	\begin{align*}
		m_1(x)&=\frac{1}{\Gamma(x)}\sum_{i,j=1}^n K_iK_j\beta_i^2\varphi_j-K_iK_j\beta_i\beta_j\varphi_j\\
		&=\colc[\bigg]{\frac{\parent[\big]{nh\rphi_{x}(h)}^2}{\Gamma(x)}}\sum_{i,j=1}^n\chave[\bigg]{ \colc[\bigg]{\frac{K_j\varphi_j}{n\rphi_{x}(h)}}\colc[\bigg]{\frac{K_i\beta_i^2}{nh^2\rphi_{x}(h)}}-\colc[\bigg]{\frac{K_i\beta_i}{nh\rphi_{x}(h)}}\colc[\bigg]{\frac{K_j\beta_j\varphi_j}{nh\rphi_{x}(h)}}}\\
		&\coloneqq Q[S_{2,1}S_{4,0}-S_{3,1}S_{3,0}]
	\end{align*}
	where, for $q\in\{2,3,4\}$ and $\ell\in\{0,1\}$,
	\begin{equation*}
		S_{q,\ell}\coloneqq\frac{1}{n\rphi_{x}(h)}\sum_{i=1}^n\frac{K_i\beta_i^{q-2}\varphi_i^\ell}{h^{q-2}} \text{ and } Q\coloneqq \frac{\parent[\big]{nh\rphi_{x}(h)}^2}{\Gamma(x)}.
	\end{equation*} 
	
	Hence, $$m_1(x)-Em_1(x)= Q\chave{[S_{2,1}S_{4,0}-E(S_{2,1}S_{4,0})]-[S_{3,1}S_{3,0}-E(S_{3,1}S_{3,0})]}.$$
	The first term in brackets above can be written as
	\begin{align*}
		S_{2,1}S_{4,0}-E(S_{2,1}S_{4,0})&=(S_{2,1}-ES_{2,1})(S_{4,0}-ES_{4,0})
		+ES_{4,0}(S_{2,1}-ES_{2,1})\\
		&+ES_{2,1}(S_{4,0}-ES_{4,0})-\cov(S_{2,1},S_{4,0}).
	\end{align*}
	The second term can also be represented analogously. 
	Therefore, the asymptotic order of $m_1(x)-Em_1(x)$ will be determined as soon as the following results are proven for $q\in\{2,3,4\}$ and $\ell\in\{0,1\}$:
	\begin{enumerate}
		\item[(a)] $ES_{q,\ell}=O(1)$;
		\item[(b)] $Q=O(\rphi_{x}(h)^{1-2p_\text{max}})$;
		\item[(c)] $S_{q,\ell}-ES_{q,\ell}=\Oac \parent[\Big]{\sqrt{\ln n/(n\rphi_{x}(h))}\ }$;
		\item[(d)] $\cov(S_{2,1},S_{4,0})=o\parent[\Big]{\sqrt{\ln n/(n\rphi_{x}(h))}\ }, \cov(S_{3,1},S_{3,0})=o\parent[\Big]{\sqrt{\ln n/(n\rphi_{x}(h))}\ }.$
	\end{enumerate}
	
	The first result, (a), can be obtained through Lemma 1(i)(v) as follows
	\begin{equation*}
		ES_{q,\ell}\leq \frac{C}{n\rphi_{x}(h)}\sum_{i=1}^n \frac{E(K_i\beta_i^{q-2})}{h^{q-2}}\leq \frac{C}{n\rphi_{x}(h)}\sum_{i=1}^n \frac{h^{q-2}\rphi_{x}(h)}{h^{q-2}}\leq C.
	\end{equation*}
	
	By \eqref{eq20_} in the proof of Proposition \ref{p1}, together with \textbf{A7} and \textbf{A9}, it can be seen that $\Gamma(x)\geq cn(n-1)h^2\rphi_{x}(h)^{1+2p_{\text{max}}}$ for $n$ sufficiently large. Then, (b) follows from
	\begin{equation*}
		Q= \frac{\parent[\big]{nh\rphi_{x}(h)}^2}{\Gamma(x)}\leq C\rphi_{x}(h)^{1-2p_\text{max}},
	\end{equation*}
	for all $n$ large enough.
	
	Next, (c) is proved by applying the Fuk-Nagaev's inequality \citep[Theorem 6.2 of][]{rio}. Write $S_{q,\ell}-ES_{q,\ell}=\colc[\big]{n\rphi_{x}(h)}^{-1}\sum_{i=1}^{n} Z_i^{(q,\ell)}$ where
	\begin{equation*}
		Z_i^{(q,\ell)}\coloneqq\Lambda_i^{(q-2,\ell)}(x)= \frac{1}{h^{q-2}}\chave{K_i\beta_i^{q-2}\varphi_i^\ell-E\colc{K_i\beta_i^{q-2}\varphi_i^\ell}}.
	\end{equation*}
	Obviously, $EZ_i^{(q,\ell)}=0$. Moreover, it can be shown that $E\abs[\big]{\Lambda_i^{(q-2,\ell)}}^r=O(\rphi_{x}(h))$, $\forall r\geq 2,$ and so $Z_i^{(q,\ell)}$ has finite variance (see inequality (4) in the proof of Lemma \ref{l3} in the Supplementary Material). For all $n$ large enough, Markov's inequality implies that
	\begin{equation*}
		P(\abs{Z_i^{(q,\ell)}}>t)\leq \frac{E\abs{Z_i^{(q,\ell)}}^r}{t^r}\leq \frac{C}{t^r}\rphi_{x}(h)\leq \frac{1}{t^r}. 
	\end{equation*}
	With these observations, the conditions of the Fuk-Nagaev's inequality are fulfilled. Thus we have that
	\begin{align}\notag
		P\parent[\bigg]{\abs[\bigg]{\sum_{i=1}^{n}Z_i^{(q,\ell)}}>4\lambda n\rphi_{x}(h)}&\leq 4\parent[\bigg]{1+\frac{(\lambda n\rphi_{x}(h))^2}{vS_{n,\ell,q-2}^2(x)}}^{-v/2}+4C\frac{n}{v}\parent[\bigg]{\frac{v}{\lambda n\rphi_{x}(h)}}^{r(a+1)/(a+r)}\\\label{eq28_}
		&\coloneqq M_{1,n}+M_{2,n},
	\end{align}
	for any $\lambda>0, v\geq 1$.
	Set $\lambda\coloneqq \eta \sqrt{\ln n/(n\rphi_{x}(h))}$ and $v\coloneqq C' (\ln n)^2$ where $\eta, C'>0$ are arbitrary.

	We start with the term $A_1$. Rewrite
 \begin{align*}
     M_{1,n}&=4\parent[\bigg]{1+\frac{\eta^2 n\rphi_x(h)}{C' S_{n,\ell,q-2}^2(x)\ln n}}^{-v/2}=4\parent[\bigg]{1-\frac{\eta^2 n\rphi_x(h)}{C' S_{n,\ell,q-2}^2(x)\ln n+\eta^2 n\rphi_x(h)}}^{v/2}\\
     &\coloneqq 4(1-t_n)^{v/2}.
 \end{align*}
 Inspecting the sequence $\{t_n\}_{n\in\N}$ with the help of  Lemma \ref{l3}, we obtain that
	\begin{equation}\label{eq29_}
		t_n\leq  \frac{\eta^2 n \rphi_{x}(h)}{C' Cn\rphi_x(h) \ln n +\eta^2 n\rphi_{x}(h)}= \frac{1}{\ln n} \frac{\eta^2}{\parent{C'c+\eta^2}} \leq \frac{1}{\ln n},
	\end{equation}
	and 
	\begin{equation}\label{eq30_}
		t_n \geq  \frac{\eta^2 n \rphi_{x}(h)}{C' cn\rphi_x(h) \ln n +\eta^2 n\rphi_{x}(h)}\geq \frac{1}{\ln n+1}\geq \frac{1}{2\ln n},
	\end{equation}
	by choosing $\eta^2=C'C$ and for all $n$ sufficiently large. From \eqref{eq29_}, $0\leq t_n\leq (\ln n)^{-1}$ for all $n$ large enough which implies that $t_n\to 0, n\to\infty$. It is well known that the first order Taylor 
 expansion of $g(t_n)\coloneqq\ln(1-t_n)$ for $t_n\to 0, n\to\infty$, satisfies $g(t_n)=g(0)+g'(0)(t_n-0)+o(t_n)=-t_n+o(t_n)$. Hence $\ln (1-t_n)\overset{a}{\approx}-t_n$.\footnote{Note that the result $\ln (1-t_n)\overset{a}{\approx}-t_n$ is not guaranteed without the lower bound in Lemma \ref{l3}.}. Clearly, $(v/2)\ln (1-t_n)\overset{a}{\approx}-(v/2)t_n$ also holds, and since the exponential function is continuous on $\R$, inequality  \eqref{eq30_} implies
	\begin{equation}\label{eq31_}
		(1-t_n)^{v/2}\overset{a}{\approx} \exp\chave[\Big]{\frac{-t_nv}{2}}\leq \exp\chave[\Big]{-\frac{1}{2\ln n}\frac{C'(\ln n)^2}{2}}=n^{-C'/4},
	\end{equation}
	and thus 
	\begin{equation}\label{eq32_}
		M_{1,n}=O(n^{-C'/4}).
	\end{equation}
	
	Next, we focus on the term $A_2$. We have that
	\begin{equation}\label{eq33_}
		A_2\leq C\frac{n}{(\ln n)^{2}}\parent[\bigg]{\frac{(\ln n)^3}{ n\rphi_{x}(h) }}^{\frac{r(a+1)}{2(a+r)}}\leq Cn^{1-\frac{1}{2}\frac{(a+1)r}{a+r}}(\ln n)^{-2+\frac{3}{2}\frac{(a+1)r}{a+r}}\rphi_{x}(h)^{-\frac{1}{2}\frac{(a+1)r}{a+r}}.
	\end{equation}
	Define $g_1(r)=(a+1)r/(a+r)$ given $a=3+\delta$, in view of \textbf{A8}. Then $g_1$ is a positive monotone increasing function on $\R_+$ such that $\lim_{r\to\infty}g_1(r)=a+1=4+\delta$. By \textbf{A8}, there is $\Delta>0$ such that $\epsilon\coloneqq \delta-\Delta>0$. Then $(4+\delta)-g_1(r)<\epsilon$, or equivalently, $4+\Delta<g_1(r)$ for any $r$ sufficiently large. Thus, from \eqref{eq33_} and \textbf{A8},
	\begin{align}\notag
		A_2&\leq C n^{1-(4+\Delta)/2}(\ln n)^{-2+3(a+1)/2}\rphi_{x}(h)^{-(a+1)/2}&\\\label{eq34_}
		&= \frac{C}{n^{1+\Delta/2}(\ln n)^2}\colc[\bigg]{\frac{(\ln n)^3}{\rphi_{x}(h)}}^{(a+1)/2}\leq \frac{C}{n^{1+\Delta/2}(\ln n)^2}n^{\Delta/2} = \frac{C}{n(\ln n)^2},
	\end{align}
	for all $n$ and $r$ large enough. As $C'>0$ can be chosen arbitrarily large in \eqref{eq32_}, a suitable choice of $C'$ implies that $A_1=o\parent[\big]{1/(n(\ln n)^2)}$. 
	Therefore, by combining \eqref{eq28_}, \eqref{eq32_} and \eqref{eq34_}, we have that\footnote{See Theorem 3.29 of \cite{rudin}.}
	\begin{equation*}
		\sum_{n=1}^{\infty}P\parent[\big]{\abs{S_{q,\ell}-ES_{q,\ell}}>\lambda}=\sum_{n=1}^{\infty}P\parent[\bigg]{\abs[\bigg]{\sum_{i=1}^{n}Z_i^{(q,\ell)}}>4\lambda n\rphi_{x}(h)}\leq C \sum_{n=1}^{\infty}\frac{1}{n(\ln n)^2}<\infty, 
	\end{equation*}
	with  $\lambda\coloneqq\eta \sqrt{\ln n/(n\rphi_{x}(h))}$ which shows the desired result.
	
	The proof of (d) is omitted since we can proceed along the same lines  as in the proof of Lemma \ref{l3} (see Section 2 of the Supplementary Material). It is worth noting that $1/(n\rphi_{x}(h))=o\parent[\big]{\sqrt{\ln n/(n\rphi_{x}(h))}}$.
	
	With results (a), (c) and (d) in hand, it follows that
	\begin{equation*}
		S_{2,1}S_{4,0}-E(	S_{2,1}S_{4,0})=\Oac\parent[\Bigg]{\sqrt{\frac{\ln n}{n\rphi_{x}(h)}}}.
	\end{equation*}
	The same result holds for $S_{3,1}S_{3,0}-E(	S_{3,1}S_{3,0})$. Thus, from (b),
	\begin{equation*}
		m_1-Em_1=\Oac\parent[\Bigg]{\sqrt{\frac{\ln n}{n\rphi_{x}(h)}}}O\parent[\big]{\rphi_{x}(h)^{1-2p_\text{max}}}=\Oac\parent[\Bigg]{\sqrt{\frac{\ln n}{n\rphi_x(h)^{4p_\text{max}-1}}}}.
	\end{equation*}
	In particular, if we set $\varphi=1$, then we get
	\begin{equation*}
		m_0-Em_0=m_0-1=\Oac\parent[\Bigg]{\sqrt{\frac{\ln n}{n\rphi_x(h)^{4p_\text{max}-1}}}},
	\end{equation*}
	proving \eqref{eq25_}.
	
	Now we focus on the asymptotic orders in probability in \eqref{eq26_} and \eqref{eq27_}. As the results (a), (b) and (d) are already proven and does not depend on \textbf{A6}(i), it is sufficient to show that 
	$S_{q,\ell}-ES_{q,\ell}=O_p\parent[\Big]{\sqrt{\ln n/(n\rphi_{x}(h))}\ }$. It can be easily obtained from the fact that
	\begin{align*}
		\var\parent{S_{q,\ell}}&=E\colc[\bigg]{\parent[\bigg]{\frac{1}{n\rphi_{x}(h)}\sum_{i=1}^n \Lambda_i^{(q-2,\ell)}(x)}^2}\\
		&\leq \frac{1}{(n\rphi_{x}(h))^2}\sum_{i,j=1}^n\abs[\Big]{\cov\parent[\big]{\Lambda_i^{(q-2,\ell)}(x),\Lambda_j^{(q-2,\ell)}(x)}}=\frac{S^2_{n,\ell,q-2}(x)}{(n\rphi_{x}(h))^2}.
	\end{align*}
	By the result \eqref{eq17_} of Lemma \ref{l3}, $\var\parent{S_{q,\ell}}=O(1/(n\rphi_{x}(h)))$, and so, using the Chebychev's inequality we have that
	\begin{align*}
		P\parent[\bigg]{\abs{S_{q,\ell}-ES_{q,\ell}}\geq \frac{\epsilon}{\sqrt{n\rphi_{x}(h)}}}\leq \frac{\var\parent{S_{q,\ell}} n\rphi_{x}(h)}{\epsilon}\leq \frac{C}{\epsilon},\quad \forall \epsilon>0,
	\end{align*}
	which implies the desired result.\qed

\begin{remark}
	The proofs of Lemmas 2 and 5 of \cite{leulmi},  require Taylor approximations $\ln(1+x)=x-x^2/2+o(x^2)$, as $x\to 0$, in order to bound the terms implied by the Fuk-Nagaev's inequality, where $x$ is related to the term $A_1$ in \eqref{eq28_}. 
	However, to ensure that  $x\to 0$ as $n\to\infty$ the result $S_{n,\ell,q-2}^2=O(n\rphi_x(h))$ stated in their Lemma A.2 is not sufficient. To be on the safe side, we provide a stronger and sufficient result in Lemma \ref{l3} (as well as in  Lemmas \ref{l5} and \ref{l6}). 
\end{remark}

\begin{prop}\label{p4}
	If the assumptions \textbf{H1}-\textbf{H8} hold, then
	\begin{equation}
		\sup_{x\in S}\abs{m_1 (x)-Em_1(x)}=\Oac\parent[\bigg]{\sqrt{\frac{\ln n}{n\rphi_x(h)^{4p_{\max}-1}}}},
	\end{equation}
	and 
	\begin{equation}
		\sup_{x\in S}\abs{m_0 (x)-1}=\Oac\parent[\bigg]{\sqrt{\frac{\ln n}{n\rphi_x(h)^{4p_{\max}-1}}}}.
	\end{equation}
\end{prop}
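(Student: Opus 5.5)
We mirror the proof of Proposition \ref{p3} and add a covering argument. As before, write $m_1(x)=Q(x)[S_{2,1}(x)S_{4,0}(x)-S_{3,1}(x)S_{3,0}(x)]$, with all quantities now indexed by $x\in S$. Expanding $S_{2,1}S_{4,0}-E(S_{2,1}S_{4,0})$ into centered products, means and covariances gives the uniform analogues of (a)--(d).

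\textbf{Deterministic steps (a), (b), (d).} Step (a) becomes $\sup_{x\in S}ES_{q,\ell}(x)=O(1)$, by Lemma \ref{l4}(i),(v) together with \textbf{H1}. Step (b) becomes $\sup_{x\in S}Q(x)=O(\rphi(h)^{1-2p_{\max}})$. For it, repeat the lower bound \eqref{eq20_} uniformly, using Lemma \ref{l4}(ii),(iii) and \textbf{H7}(ii). Then \textbf{H6} and \textbf{H1} give $\inf_{x\in S}\Gamma(x)\geq cn(n-1)h^2\rphi(h)^{1+2p_{\max}}$. Step (d), the covariance terms, is uniform because the covariance sums are bounded by Lemma \ref{l5}: $\sup_x S^2_{n,\ell,k}(x)\leq Cn\rphi(h)$ gives $\sup_x|\cov(S_{q,\ell},S_{q',\ell'})|=O(1/(n\rphi(h)))=o(\sqrt{\ln n/(n\rphi(h))})$. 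Given these, the whole problem reduces to showing
$$\sup_{x\in S}|S_{q,\ell}(x)-ES_{q,\ell}(x)|=\Oac\parent[\Big]{\sqrt{\ln n/(n\rphi(h))}}.$$

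\textbf{Uniform stochastic step (c).} We use the $r_n$-net $x_1,\dots,x_{N}$ of \textbf{H8}, with $j(x)$ the index of the nearest net point to $x$. Decompose
$$S_{q,\ell}(x)-ES_{q,\ell}(x)=[S_{q,\ell}(x)-S_{q,\ell}(x_{j(x)})]+[S_{q,\ell}(x_{j(x)})-ES_{q,\ell}(x_{j(x)})]+[ES_{q,\ell}(x_{j(x)})-ES_{q,\ell}(x)].$$

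\emph{Middle term.} A union bound over the $N=\exp(\Phi_S(\ln n/n))\approx n^{C}$ net points, combined with the Fuk--Nagaev argument of Proposition \ref{p3}, handles this term. The choice $v=C'(\ln n)^2$ stays the same. The lower bound $\inf_x S^2\geq cn\rphi(h)$ from Lemma \ref{l5} makes $M_{1,n}=O(n^{-C'/4})$ uniformly, so $N\cdot M_{1,n}$ is summable once $C'$ is large. Under the geometric mixing of \textbf{H5}, the mixing coefficients decay faster than any polynomial. We may therefore take the arithmetic rate $a$ arbitrarily large in the Fuk--Nagaev bound. Together with $\rphi(h)^2\geq(\ln n)^3n^{-\Delta_1}$, this makes $N\cdot M_{2,n}$ summable as well.

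\emph{First and third terms.} We use the Lipschitz properties of $K$ (\textbf{H4}) and of $\beta$ (\textbf{H3}). Each summand difference $|K_i(x)\beta_i(x)^k-K_i(x_{j(x)})\beta_i(x_{j(x)})^k|/h^k$ is bounded by $C(r_n/h)\,1_{B(x,h)\cup B(x_{j(x)},h)}(\rchi_i)$. The first term is therefore bounded by $(n\rphi(h))^{-1}\sum_iT_i^\ell(x)$, where $T_i^\ell$ is as defined before Lemma \ref{l6}. By Lemma \ref{l6}, $\sup_x ET_i^\ell=O((r_n/h)\rphi(h))$, so the mean part is $O(r_n/h)$. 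This is $o(\sqrt{\ln n/(n\rphi(h))})$ because $r_n=O(\ln n/n)$ and $nh\to\infty$, although this needs care, possibly via $\rphi'$ bounded in \textbf{H1}. The same bound covers the third term. The fluctuation $\sum_i M_i^\ell$ is controlled by a second Fuk--Nagaev application with union bound, using the two-sided bounds on $W^2_{n,\ell}$ from Lemma \ref{l6}.

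\textbf{Conclusion and main obstacle.} Combining the pieces as in Proposition \ref{p3} yields the rate for $m_1$. The result for $m_0$ follows by taking $\varphi\equiv1$, since $Em_0=1$. The main obstacle is the union-bound Fuk--Nagaev step. The polynomial term $M_{2,n}$ must beat the entropy factor $n^{C}$, which requires converting geometric mixing into an arbitrarily large arithmetic rate. The exponents in \textbf{H5} and the exponents in the moment order $r$ must then be balanced carefully. A secondary difficulty is verifying $r_n/h=o(\sqrt{\ln n/(n\rphi(h))})$.
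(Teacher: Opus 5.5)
Your proposal follows the paper's proof essentially step for step. It uses the same reduction to uniform versions of (a)--(d) and the same three-term decomposition around the $r_n$-net. The middle term gets the same union bound plus Fuk--Nagaev, with geometric mixing letting you take the arithmetic rate large enough to absorb $N_{r_n}(S)\approx n^{C_0}$. The net-approximation terms are dominated by $\sum_i T_i^\ell$ and controlled through Lemma~\ref{l6}, as in the paper. Your ``secondary difficulty'' is settled as in the paper: bounded $\rphi'$ and $\rphi(0^+)=0$ in \textbf{H1} give $\rphi(h)\le Ch$, hence $r_n/h\le C\ln n/(n\rphi(h))=o\bigl(\sqrt{\ln n/(n\rphi(h))}\bigr)$ by \textbf{H5}. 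The condition $nh\to\infty$ alone would not suffice.

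One step, however, does not hold under the stated hypotheses, and the paper's proof makes the same assertion. The step is the pointwise bound $|K_i(x)\beta_i(x)^k-K_i(x_{j(x)})\beta_i(x_{j(x)})^k|/h^k\le C(r_n/h)1_{B(x,h)\cup B(x_{j(x)},h)}(\rchi_i)$.

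\textbf{Type (II) kernels.} Here the bound is correct. $K$ is Lipschitz on $[0,1]$ with $K(1)=0$ and vanishes beyond $1$, so it is Lipschitz on all of $[0,\infty)$. Hence $K_i(x)\le Cr_n/h$ on $B(x,h)\setminus B(x_{j(x)},h)$.

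\textbf{Type (I) kernels.} \textbf{H4} also admits kernels such as $K=1_{[0,1]}$, which jump at $1$. If $\rchi_i\in B(x,h)\triangle B(x_{j(x)},h)$, then $h-r_n<d(x_{j(x)},\rchi_i)\le h+r_n$, and the summand there is of order one, not $r_n/h$. You must therefore control $\max_j(n\rphi(h))^{-1}\sum_i|\varphi_i|^\ell 1\{h-r_n<d(x_j,\rchi_i)\le h+r_n\}$. Its fluctuation yields to your union-bound/Fuk--Nagaev argument. Its mean, however, is of order $\rphi(h)^{-1}\sup_{x\in S,\,i}[\rphi_{x,i}(h+r_n)-\rphi_{x,i}(h-r_n)]$. \textbf{H1} only sandwiches $\rphi_{x,i}$ between multiples of $\rphi$, so it gives no control of such increments.

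So you need either to restrict this step to type (II) kernels, or to add a regularity condition. For example, $\rphi_{x,i}(h+r)-\rphi_{x,i}(h-r)\le Cr$ uniformly in $x\in S$ and $i$ makes that mean $O(r_n/\rphi(h))=o\bigl(\sqrt{\ln n/(n\rphi(h))}\bigr)$.

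Two smaller repairs are also needed, and the paper glosses over them too.
\begin{itemize}
\item $T_i^\ell(x)$ depends on $x$ itself, not only on $x_{j(x)}$. A union bound over the net for $\sum_i M_i^\ell$ therefore needs the prior domination $1_{B(x,h)\cup B(x_{j(x)},h)}\le 1_{B(x_{j(x)},h+r_n)}$, which turns the supremum into a maximum over net points.
\item $S_{q,\ell}$ should be normalized by the deterministic $\rphi(h)$ rather than by $\rphi_x(h)$. Otherwise $S_{q,\ell}(x)-S_{q,\ell}(x_{j(x)})$ contains a non-vanishing term coming from $1/\rphi_x(h)-1/\rphi_{x_{j(x)}}(h)$. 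This change leaves $Q[S_{2,1}S_{4,0}-S_{3,1}S_{3,0}]$ unchanged.
\end{itemize}
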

\noindent\textbf{Proof of Proposition \ref{p4}}
	As argued in the proof of Proposition \ref{p3}, it is sufficient to show that, uniformly on $x\in S$, for $q\in \{2,3,4\}$ and $\ell\in\{0,1\}$, 
	\begin{enumerate}
		\item[(a)] $ES_{q,\ell}=O(1)$;
		\item[(b)] $Q=O(\rphi_{x}(h)^{1-2p_\text{max}})$;
		\item[(c)] $S_{q,\ell}-ES_{q,\ell}=\Oac\parent[\Big]{\sqrt{\ln n/(n\rphi_{x}(h))}\ }$;
		\item[(d)] $\cov(S_{2,1},S_{4,0})=o\parent[\Big]{\sqrt{\ln n/(n\rphi_{x}(h))}\ }, \cov(S_{3,1},S_{3,0})=o\parent[\Big]{\sqrt{\ln n/(n\rphi_{x}(h))}\ },$
	\end{enumerate}
	where \begin{equation*}
		S_{q,\ell}\coloneqq\frac{1}{n\rphi_{x}(h)}\sum_{i=1}^n\frac{K_i\beta_i^{q-2}\varphi_i^\ell}{h^{q-2}} \text{ and } Q\coloneqq \frac{\parent[\big]{nh\rphi_{x}(h)}^2}{\Gamma(x)}.
	\end{equation*}
	
	Items (a), (b) and (d) follow from similar arguments to those used to prove Proposition 3.
	
	It remains to show (c). For $x\in S$, set $j(x)\coloneqq \argmin_{j\in[N_{r_n}(S)]}d(x,x_j)$. Then 
	\begin{align*}
		\sup_{x\in S}\abs{S_{q,\ell}(x)-ES_{q,\ell}(x)}&\leq \sup_{x\in S}\abs{S_{q,\ell}(x)-S_{q,\ell}(x_{j(x)})}+\sup_{x\in S}\abs{S_{q,\ell}(x_{j(x)})-ES_{q,\ell}(x_{j(x)})}\\
		&+\sup_{x\in S}\abs{ES_{q,\ell}(x_{j(x)})-ES_{q,\ell}(x)}\\
		&\coloneqq A_1+A_2+A_3.
	\end{align*}
	
	We start with the term $A_2$. Using the monotonicity and the subadditivity of the measure $P$, it holds that for any $\lambda>0$
	\begin{align*}
		P(A_2>\lambda)&=P\parent[\Big]{\max_{j\in[N_{r_n}(S)]}\abs{S_{q,\ell}(x_{j})-ES_{q,\ell}(x_{j})}>\lambda}\\
		&\leq \sum_{j=1}^{N_{r_n}(S)} P\parent{\abs{S_{q,\ell}(x_{j})-ES_{q,\ell}(x_{j})}>\lambda}\\
		&\leq N_{r_n}(S)\max_{j\in[N_{r_n}(S)]} P\parent{\abs{S_{q,\ell}(x_{j})-ES_{q,\ell}(x_{j})}>\lambda}.
	\end{align*}
	The application of the Fuk-Nagaev's inequality gives 
	\begin{align*}
		N_{r_n}(S)P\parent{\abs{S_{q,\ell}(x_{j})-ES_{q,\ell}(x_{j})}>\lambda}\leq A_{2,1}+A_{2,2}
	\end{align*}
	where $$A_{2,1}=CN_{r_n}(S)\parent[\bigg]{1+\frac{(\lambda n\rphi(h))^2}{vS^2_{n,\ell,q-2}}}^{-v/2} \textup{ and } A_{2,2}=CN_{r_n}(S)\frac{n}{v}\parent[\bigg]{ \frac{n}{\lambda n\rphi(h)} }^{r(a+1)/(a+r)},$$
	for any $v\geq 1$ and $r\geq 2$. Set $\lambda\coloneqq \eta \sqrt{\ln n/(n\rphi(h))}$ and $v\coloneqq C'(\ln n)^2$ with $\eta,C'>0$ being arbitrary constants. Similar to what has been done for item (d) in the proof of Proposition \ref{p3}, and with the help of Lemma \ref{l5}, one can check that uniformly on $x\in S$ 
	$$A_{2,1}\leq C N_{r_n}(S)n^{-C'} \textup{ and } A_{2,2}\leq  CN_{r_n}(S)\frac{n}{(\ln n)^{2}}\parent[\bigg]{\frac{(\ln n)^3}{ n\rphi(h) }}^{d}$$ 
	where $d=r(a+1)/[2(a+r)]$.  
	Note that $N_{r_n}(S)\overset{a}{\approx} n^{C_0}$ for some $C_0>0$ from \textbf{H8}. Then $A_{2,1}=O(n^{C_0-C'})=O(n^{-1-\xi})$ for some $\xi>0$ as long as $C'>0$ is chosen suitably large.  On the other hand, since  geometric mixing rates imply arithmetic mixing rates for any $a>0$, we can pick $r=a=4(2+C_0)/(1-\Delta_1)-1>2$, implying $	d(\Delta_1-1)=-2-C_0$, to conclude that
	\begin{equation} 
		A_{2,2}\leq C\frac{n^{1+C_0}}{(\ln n)^{2}}\parent[\bigg]{\frac{(\ln n)^3}{ n\rphi_{x}(h) }}^{d}\leq C\frac{n^{1+C_0-d(1-\Delta_1)}}{(\ln n)^2}= \frac{C}{(\ln n)^2 n}.
	\end{equation}
	As $n^{-1-\xi}=o(1/(n(\ln n)^2))$, the term $A_{2,1}$ is dominated by $A_{2,2}$, and so, we have that
	
	\begin{equation*}
		\sum_{n=1}^{\infty}P\parent[\Bigg]{A_2>\eta \sqrt{\frac{\ln n}{n\rphi(h)}}\ }\leq C \sum_{n=1}^{\infty}\frac{1}{(\ln n)^2 n}<\infty. 
	\end{equation*}
	
	Next, we cope with the term $A_1$. Rewrite
	\begin{align*}
		A_1&=\frac{1}{n\rphi(h)h^{q-2}}\sup_{x\in S}\sum_{i=1}^n\abs{\varphi_i}^\ell K_i(x)1_{B(x,h)}(\rchi_i)\abs{\beta_i(x)^{q-2}-\beta_i(x_{j(x)})^{q-2}1_{B(x_{j(x)},h)}(\rchi_i)}\\
		&+\frac{1}{n\rphi(h)h^{q-2}}\sup_{x\in S}\sum_{i=1}^n\abs{\varphi_i}^\ell \abs{\beta_i(x_{j(x)})}1_{B(x_{j(x)},h)}(\rchi_i)\abs{K_i(x)1_{B(x,h)}(\rchi_i)-K_i(x_{j(x)})}.
	\end{align*}
	Put
	\begin{align*}
		R_{i,q,x}^1&=1_{B(x,h)}(\rchi_i)\abs{\beta_i(x)^{q-2}-\beta_i(x_{j(x)})^{q-2}1_{B(x_{j(x)},h)}(\rchi_i)},\\
		R_{i,q,x}^2&=1_{B(x_{j(x)},h)}(\rchi_i)\abs{K_i(x)1_{B(x,h)}(\rchi_i)-K_i(x_{j(x)})},
	\end{align*}
	and observe that 
	\begin{align*}
		R_{i,q,x}^1&=\left\{\begin{array}{ll}
			\abs{\beta_i(x)-\beta_i(x_{j(x)})}\abs[\big]{\sum_{k=0}^{q-3}\beta_i(x)^{q-3-k}\beta_i(x_{j(x)})^{k}}&,  \text{if } \rchi_i\in B(x,h)\cap B(x_{j(x)},h)\\
			\abs{\beta_i(x)}^{q-2}&,  \text{if } \rchi_i\in B(x,h)\setminus B(x_{j(x)},h)\\
			0&,  \text{if } \rchi_i\notin B(x,h)
		\end{array}\right. \\
		R_{i,q,x}^2&=\left\{\begin{array}{ll}
			\abs{K_i(x)-K_i(x_{j(x)})}&,  \text{if } \rchi_i\in B(x_{j(x)},h)\cap B(x,h)\\
			K_i(x_{j(x)})&,  \text{if } \rchi_i\in B(x_{j(x)},h)\setminus B(x,h)\\
			0&,  \text{if } \rchi_i\notin B(x_{j(x)},h)
		\end{array}\right. .
	\end{align*}
	Moreover, the triangle inequality implies that $\abs{d(x,\rchi_i)-d(x_{j(x)},\rchi_i)}\leq d(x,x_{j(x)})$. From these observations, we apply \textbf{H3} and \textbf{H4} to obtain that
	\begin{align}\notag
		A_1&\leq C\frac{r_n}{nh\rphi(h)}\sup_{x\in S}\sum_{i=1}^n\abs{\varphi_i}^\ell\chave[\big]{1_{B(x_{j(x)},h)\cap B(x,h)}(\rchi_i) +1_{B(x,h)\setminus B(x_{j(x)},h)}(\rchi_i) \\\notag
			&+ 1_{B(x_{j(x)},h)\setminus B(x,h)}(\rchi_i) }\\\notag
		&=\frac{C}{n\rphi(h)}\sup_{x\in S}\sum_{i=1}^n \frac{r_n}{h} \abs{\varphi_i}^\ell1_{B(x_{j(x)},h)\cup B(x,h)}(\rchi_i)\\\notag
		&\coloneqq \frac{C}{n\rphi(h)}\sup_{x\in S}\sum_{i=1}^nT_i^\ell(x)\\\notag &=\frac{C}{n\rphi(h)}\sup_{x\in S}\sum_{i=1}^n\colc[Big]{T_i^\ell(x)-ET_i^\ell(x)} +\frac{C}{n\rphi(h)}\sup_{x\in S}\sum_{i=1}^nET_i^l(x)\\ \label{eq38_}
		&\coloneqq A_{1,1}+A_{1,2}
	\end{align}
	By Lemma \ref{l6}, we have that $A_{1,2}=O(r_n/h)$. The application of  Fuk-Nagaev's inequality, in a similar way as did for the term $A_2$ with 
	$$\lambda=\eta\parent[\bigg]{\frac{r_n}{h}}^2\sqrt{\frac{\ln n}{n\rphi(h)}} \textup{ and } v=C'\parent[\bigg]{\frac{r_n}{h}}^{-2}(\ln n)^2$$
	leads to $A_{1,2}=\Oac\parent[\big]{(r_n/h)^2\sqrt{\ln n/(n\rphi(h))}}$. Therefore
	\begin{align*}
		A_1&=O\parent[\bigg]{\frac{r_n}{h}}+\Oac\parent[\Bigg]{\parent[\bigg]{\frac{r_n}{h}}^2\sqrt{\frac{\ln n}{n\rphi(h)}}\ }=\Oac\parent[\Bigg]{\frac{r_n}{h}\colc[\bigg]{1 + \underbrace{\frac{r_n}{h} \parent[\bigg]{ \frac{\ln n}{n\rphi(h)} }^{1/2}}_{=o(1)} } }\\
		&=\Oac\parent[\bigg]{\frac{r_n}{h}}=\Oac \parent[\bigg]{\sqrt{\frac{\ln n}{n\rphi(h)}}\ },
	\end{align*}
	using the facts that $\rphi(h)=\lim_{a\to 0}\int_a^h\rphi'(h)\leq C h$ from \textbf{H1}, and that 
	\begin{align*}
		\frac{r_n}{h}\sqrt{\frac{n\rphi(h)}{\ln n}}\leq C \sqrt{\frac{\ln n}{n\rphi(h)}}=o(1),
	\end{align*}
	from \textbf{H5} and \textbf{H8}. 
	The last term $A_3$ satisfies
	\begin{align*}
		A_3&\leq \sup_{x\in S} E\abs{S_{q,\ell}(x)-S_{q,\ell}(x_{j(x)})}\leq \frac{C}{n\rphi(h)}\sum_{i=1}^n \sup_{x\in S} ET_i^\ell(x)\\
		&\leq C\frac{r_n}{h}\leq C \sqrt{\frac{\ln n}{n\rphi(h)}}.
	\end{align*}
	This completes the proof. \qed

\section*{Appendix B: Main proofs}

\textbf{Proof of Theorem \ref{teo1}} Let $m_\ell(x)=(1/\Gamma(x))\sum_{i\neq j}^n w_{i,j}(x) \varphi_j^\ell$ where $\Gamma(x)=\sum_{i\neq j}^n E(w_{i,j}(x))$, for $\ell\in\N$. Then
\begin{align*}
	\hat m_\varphi(x)-m_\varphi(x)&=\frac{1}{m_0(x)}\chave{\colc{m_1(x)-Em_1(x)} -\colc{m_\varphi(x)-Em_1(x)}}\\
	&-\frac{m_\varphi(x)}{m_0(x)}\colc{m_0(x)-1}.
\end{align*}
Denote $a_n= \sqrt{\ln n/(n\rphi_x(h)^{4p_\text{max}-1})}$. From Propositions \ref{p1} and \ref{p3}, it follows that\footnote{For more details, see  Propositions 5-6 of the Supplementary Material.}
\begin{align*}
	\hat m_\varphi(x)-m_\varphi(x)&=\colc[\big]{1+\Oac\parent[\big]{a_n}   }\chave[\big]{\Oac\parent[\big]{a_n}+O(h^b)}\\
	&=\Oac\parent[\big]{a_n}+O(h^b).
\end{align*}\qed

\vspace{1em}
The proofs of Corollary \ref{coro1} and Theorem \ref{teo2} are similar to that of Theorem \ref{teo1}, and thus omitted.
\vspace{1em}

\textbf{Proof of Corollary \ref{coro2}} Under independence, $$\Psi_{x,i,j}(h)=\colc[\big]{\rphi_{x,i}(th,h)\rphi_{x,j}(wh,h)}^{1/2+p_{i,j}}, \ \forall i,j\in[n],$$ with $p_{i,j}=p_{1,i,j}=p_{2,i,j}=1/2$. Thus $p_{\text{max}}=1/2$, and the result follows immediately from Theorem \ref{teo1}. \qed

\section*{Appendix C: Notes on previous studies}

This work is an extension of the articles of \cite{barrientos} and \cite{leulmi} (hereafter, ``BM'' and ``LM'', respectively).  BM studied the local linear estimator, discussed in this paper, for independent and identically distributed functional data. Subsequently, LM allowed the data to be weakly dependent. Unfortunately, some conditions of the latter authors seem to be too  restrictive and their derived asymptotics lacks rigor. Such issues will be discussed in the following.  

From now on, consider a sequence $\{(Y_i,\rchi_i)\}_{i\in n}$ that is equally distributed  as  $(Y,\rchi)$ and weakly dependent ($\alpha$-mixing) such that $Y_i=m(\rchi_i)+\epsilon_i,\ i\in[n],$ with $E(\epsilon_i|\rchi_i)=0$.

\vspace{1em}
\noindent
\textit{Issues related to the asymptotic results}

 Let $f_0:\R\to\R$ be a measurable function. In general, we cannot conclude that $$E(f_0(\rchi_i)f_0(\rchi_j))=E(f_0(\rchi_1)f_0(\rchi_2)),\ \forall i\neq j.$$ For instance, put $f_0(\rchi_i,\rchi_j)=K_i\beta_iK_j\beta_j$ with  $K=1_{[0,1]}$ being the uniform kernel and $\beta=d$. Then
\begin{equation*}
    E(K_i\beta_iK_j\beta_j)=h^2 \int_{[0,1]^2}uv \ dP'_{ij}(u,v), \ \forall i\neq j.
\end{equation*}
where $P'_{i,j}$ is the probability distribution of $(d(\rchi_i,x)/h,d(\rchi_j,x)/h)$. Each joint distribution $P'_{i,j}$ is determined
 depending on how  $d(\rchi_i,x)/h$ and $d(\rchi_j,x)/h$ are related. Due to this fact, we cannot conclude that all $E(K_i\beta_iK_j\beta_j), i\neq j,$ are equal to $E(K_1\beta_1K_2\beta_2)$ if the data is dependent. 
However, this equivalence is used to prove the main results of LM.  To cite an example, consider their proof of Lemma A.2 (which is used to prove their Lemma 2). In view of their assumption (H5b) and Lemma A1(ii)(iii) which are stated only in terms of $\rchi_1$ and $\rchi_2$, the following inequality is used:
$$\sum_{(i,j)\in S_1} E\parent[\big]{K_i\beta_i^k K_j\beta_j^k}\leq  \#S_1  E\parent[\big]{K_1\beta_1^k K_2\beta_2^k}=O(nm_n \rphi_{x,1}(h)^{1+d}), \quad k\in\{0,2\}, \ 0<d\leq 1,$$
where $S_1=\{(i,j): 1\leq \abs{i-j}\leq m_n\}$ with $m_n$ being a diverging sequence. Because there is no reason to  $E\parent[\big]{K_1\beta_1^k K_2\beta_2^k}$ be the greatest term in the summation, we are left to check the equality. As discussed before, the equality does not need to hold. 

Another example can be found in their proof of Lemma 1, where the arguments of BM are replicated. However, the proof of the latter authors uses results that require i.i.d. data: (i) $E(w_{i,j}(x))=E(w_{1,2}(x)), \forall i\neq j$; and (ii) $E(w_{1,2}(x) Y_2)=E(w_{1,2}(x) m(\rchi_2))$. Note that item (i) holds for i.i.d. data as shown below, for all $i\neq j$,
\begin{align*}
    E(w_{i,j})=&E(\beta_i^2 K_i K_j)-E(\beta_i K_i \beta_j K_j) \\
    \overset{indep.}{=} &E(\beta_i^2 K_i) E(K_j)-E(\beta_i K_i)E( \beta_j K_j)\\
    \overset{ident.}{=} &E(\beta_1^2 K_1) E(K_2)-E(\beta_1 K_1)E( \beta_2 K_2)=E(w_{1,2}).
\end{align*}
From the previous discussion, without the assumption of independence this equality does not need to hold. Now, if $\rchi_i$ is independent of $\rchi_j$, $i\neq j$, then (ii) can be verified using the Law of Iterated Expectations,
\begin{align*}
    E(w_{1,2} Y_2)=E(E(w_{1,2} Y_2|\rchi_2))\overset{indep.}{=}E(w_{1,2} \  m(\rchi_2)).
\end{align*}
For dependent data, we need to take the expectation conditioned to $(\rchi_1,\rchi_2)$,
\begin{align*}
    E(w_{1,2} Y_2)=E(w_{1,2} E(Y_2|(\rchi_1,\rchi_2))=E(w_{1,2}( m(\rchi_2)+E(\epsilon_2|(\rchi_1,\rchi_2)) ).
\end{align*}\
To ensure that $E(\epsilon_2|(\rchi_1,\rchi_2))=0$ using the assumption  $E(\epsilon_2|\rchi_2)=0$, the additional requirement that  the error $\epsilon_2$ is independent of $\rchi_1$ is needed.

By Fuk-Nagaev's inequality, LM derived the term $A_1(x)$ in their proof of Lemma 2. This term is equivalent to  $M_{1,n}$ which appears in the proof of Proposition \ref{p3} (Appendix A). LM bound this term by applying the Taylor expansion $\ln(1+x)=x-x^2/2+o(x^2)$ where $x$ tends to zero. In their case,
\begin{equation*}
    x\coloneqq x_n=\frac{\eta^2 n\rphi_x(h)}{ S_{n,\ell,q-2}^2(x)\ln n},
\end{equation*}
making use of our notations in the proof of Proposition \ref{p3}. By the hypothesis of LM, $n\rphi_x(h)/\ln n\to \infty$ as $n\to\infty$. Since $\{x_n\}_{n\in\N}$ is a positive sequence,  we cannot conclude that $x_n=o(1)$ without giving a suitable positive lower bound for $S_{n,\ell,q-2}^2(x)$ (to ensure that this term diverges to infinity faster than $n\rphi_x(h)/\ln n$).

The same issues pointed above also happen in their calculations related to the uniform convergence.

\vspace{1em}
\noindent
\textit{Weakening the assumptions}

The framework of LM requires that the kernel function $K$ is bounded below by a positive constant on its support $[0,1]$, which can be seem in their assumptions (H4) and (U4). However,  popular choices like the triangle, quadratic or cubic kernel functions satisfy $K(1)=0$, and thus are excluded from their analysis. Assumptions \textbf{A5} and \textbf{H4} (in Sections \ref{sec3.2} and \ref{sec3.3}, respectively) allow for both types of functions. 

 In view of our previous discussion on the asymptotics derived by LM, their assumption (H5b) that relates the local joint cumulative distribution function (CDF) and its marginal CDFs (respectively, $\Psi_{x,1,2}(h)$ and $\rphi_{x,1}(h)\rphi_{x,2}(h)$, under our notations) should be stated not only in terms of $(\rchi_1,\rchi_2)$, but also $(\rchi_i,\rchi_j), \forall i\neq j$. That is,
 
\vspace{1em}\noindent
(H5b)' There exist $0<d\leq 1, C>0, C'>0$ such that $C'[\rphi_{x,1}(h)]^{1+d}<\Psi_{x,i,j}(h)\leq C[\rphi_{x,1}(h)]^{1+d}, \forall i,j\in[n]:i\neq j.$

 \vspace{1em}
 
 However, it is of interest to ask whether it is compatible to assume that $\{(Y_i,\rchi_i)\}$ is strongly mixing with arithmetic rate $a>3$ and $\Psi_{x,i,j}(h)=\Theta\parent[\big]{\colc{\phi_{x,1}(h)}^{1+d}}$ for some $d\neq 1$ and all $i\neq j$. Consider the sets $S_s=\{(i,j): 1\leq \abs{i-j}< b_n\}$ and $S_l=\{(i,j): b_n\leq \abs{i-j}\leq n-1\}$. Then (H5b)' applies to all joint CDFs $\{\Psi_{x,i,j}(h)\}_{(i,j)\in[n]^2:i\neq j}=\{\Psi_{x,i,j}(h)\}_{(i,j)\in S_s}\cup \{\Psi_{x,i,j}(h)\}_{(i,j)\in S_l}\coloneqq \Psi_{S_s}\cup \Psi_{S_l}$. Proposition 3 in the Supplementary Material shows that $\Psi_{S_l}=\Theta (\rphi_{x,1}(h)^2)$ for $b_n=1/\rphi_{x,1}(h)$. In this case, note that the set $S_l$ is nonempty for every $n$ large enough since it contains at least the elements $\{(i,j):\abs{i-j}=n-1\}$.\footnote{If $(i,j)$ is such that $\abs{i-j}=n-1$, then it also satisfies $1/\rphi_{x,1}(h)\leq \abs{i-j}$ for $n$ large because $0\leq n\rphi_{x,1}(h) -\rphi_{x,1}(h)-1$ holds by the hypothesis that $n\rphi(h)\to\infty$, $n\to\infty$.} This is sufficient to show that there cannot exist $d\neq 1$ such that $\Psi_{x,i,j}(h)=\Theta\parent[\big]{\colc{\phi_{x,1}(h)}^{1+d}}$ for all $i\neq j$. Thus (H5b)' would be better written if we explicitly consider $d=1$. Unfortunately, (H5b)' is too restrictive for strongly mixing data in the sense that, asymptotically, $\Psi_{x,i,j}(h)=\Theta (\rphi_{x,1}(h)^2), \forall i\neq j$, is not much different from the case where data is independent ($\Psi_{x,i,j}(h)=\rphi_{x,1}(h)^2$).

\end{document}


\pagestyle{myheadings}
\markboth{}{Matsuoka and Torrent}

\thispagestyle{empty}
{\centering
\Large{\bf Supplementary material for ``Strong consistency of the local linear estimator for a generalized regression function with dependent functional data".} \vspace{.5cm}\\
\normalsize{ {\bf 
Danilo H. Matsuoka${}^{\mathrm{a}}$,\let\thefootnote\relax\footnote{\hskip-.3cm$\phantom{s}^\mathrm{a}$Research Group of Applied Microeconomics - Department of Economics, Federal University of Rio Grande.} Hudson da Silva Torrent${}^\mathrm{b}$\let\thefootnote\relax\footnote{\hskip-.3cm$\phantom{s}^\mathrm{b}$Mathematics and Statistics Institute - Universidade Federal do Rio Grande do Sul.
}
 \\
\let\thefootnote\relax\footnote{E-mails: danilomatsuoka@gmail.com (Matsuoka); hudsontorrent@gmail.com (Torrent)}
\vskip.3
cm}}
}
\section*{Introduction}

In this supplement, we provide details that are omitted in the paper. Section \ref{sec:2} presents the proofs of Lemmas stated in  Appendix A and  Section \ref{sec:3} clarifies some basic aspects and gives some examples where  the assumptions made in the text should hold.
 \vspace{1em}

\section{Proofs of Lemmas}\label{sec:2}

 In this section, we will use all the notations introduced in the paper.
 
 \vspace{1em}
 
	\noindent\textbf{Proof of Lemma 1}.
	Let $i,j\in[n]$ be given. Denote the indicator function and the pushforward measure induced by $(d(x,\rchi_i)/h,d(x,\rchi_j)/h)$, respectively, by $I(\cdot)$ and $P_{i,j}'$. 
	
	$(i)$ From \textbf{A3} and \textbf{A5},
	\begin{align*}
		E(K_i^q\abs{\beta_i}^\ell)&
         \leq CE\colc[\big]{K_i^q d(x,\rchi_i)^\ell} 
        =CE\colc[\big]{ I(0\leq d(x,\rchi_i)\leq h)K_i^q d(x,\rchi_i)^\ell} \\
        &\leq C h^l P(0\leq d(x,\rchi_i)\leq h)= C h^\ell \rphi_{x,i}(h).
	\end{align*}

	$(ii)$  Using \textbf{A3} and \textbf{A5}
	\begin{align*}
		E(K_i K_j \abs{\beta_i}^{\ell_1}\abs{\beta_j}^{\ell_2})&
  \leq CE\colc[\big]{I(0\leq d(x,\rchi_i)\leq h,0\leq d(x,\rchi_j)\leq h)K_iK_jd(x,\rchi_i)^{\ell_1}d(x,\rchi_j)^{\ell_2}} \\
       &\leq C h^{\ell_1+\ell_2}P\parent[\big]{0\leq d(x,\rchi_i)\leq h,0\leq d(x,\rchi_j)\leq h} \\
       &= C h^{\ell_1+\ell_2} \Psi_{x,i,j}(h).
	\end{align*}

	$(iii)$ Firstly, let the kernel function $K$ be of type \textbf{A5}(II). Using \textbf{A3}, the Fundamental Theorem of Calculus and the Fubini's Theorem, we have that
	\begin{align*}
		E(K_iK_j\beta^2_i)&\geq c_3h^2\int_{[0,1]^2}u^2K(u)K(z)dP'_{i,j}(u,z)\\
		&=c_3h^2\int_{[0,1]^2}
		\colc[\bigg]{\int_{0}^{u}\frac{d}{dt}(K(t)t^2)dt}
		\colc[\bigg]{\int_{0}^{z}K'(w)dw+K(0)}dP'_{i,j}(u,z)\\
		&=c_3h^2\chave[\bigg]{\iint_{0}^{1}\int_{[t,1]\times[w,1]}dP'_{i,j}(u,z) \frac{d}{dt}(K(t)t^2)K'(w)dtdw\\
			&+K(0)\int_{0}^{1}\int_{[t,1]\times[0,1]}dP'_{i,j}(u,z) \frac{d}{dt}(K(t)t^2)dt
		}\\
		&=c_3h^2\chave[\bigg]{\iint_{0}^{1}
			\Psi_{x,i,j}(th,h,wh,h)
			\frac{d}{dt}(K(t)t^2)K'(w)dtdw\\
			&+K(0)\int_{0}^{1}\Psi_{x,i,j}(th,h,0,h) \frac{d}{dt}(K(t)t^2)dt
		}.
	\end{align*}
	By hypothesis, $K(0)=-\int_{0}^{1}K'(w)dw$. Note that, for all $t,w\in[0,1]$, the $P$-measurable sets 
 \begin{align*}
 E_1&=\{th\leq d(x,\rchi_i)\leq h, wh\leq d(x,\rchi_i)\leq h\},\\
E_2&=\{th\leq d(x,\rchi_i)\leq h, 0\leq d(x,\rchi_i)\leq h\}
 \end{align*}
 satisfy $E_1\cap E_2=E_1$, $E_2\setminus E_1=\{th\leq d(x,\rchi_i)\leq h, 0\leq d(x,\rchi_i)\leq wh\}$ and $(E_1\cap E_2)\cup (E_2\setminus E_1)=E_2$. As $E_1\cap E_2$ and $E_2\setminus E_1$ are disjoint sets, it holds that
 $P(E_2)=P(E_1)+P(E_2\setminus E_1)$, or equivalently,
 $$\Psi_{x,i,j}(th,h,0,h)-\Psi_{x,i,j}(th,h,wh,h)=\Psi_{x,i,j}(th,h,0,wh).$$
 Then, using \textbf{A10} and the previous considerations,
	\begin{align*}
		E(K_iK_j\beta^2_i)&\geq -c_3h^2\iint_{0}^{1}\Psi_{x,i,j}(th,h,0,wh) \frac{d}{dt}(K(t)t^2)K'(w)dtdw\\
		&> c_3c_{10}h^2\Psi_{x,i,j}(h)
	\end{align*}
	for  all $n$ sufficiently large.
	In view of the above arguments, if the function $K$ is of type \textbf{A5}(I), then, for all $n$ large enough,
	\begin{align*}
		E(K_iK_j\beta^2_i)&\geq c_3 h^2\int_{[0,1]^2}u^2K(u)K(z)dP'_{i,j}(u,z)\geq c_3c_5h^2\int_{[0,1]^2}u^2K(u)dP'_{i,j}(u,z) \\
		&= c_3c_5h^2\int_{0}^{1}\Psi_{x,i,j}(th,h,0,h)
		\frac{d}{dt}(K(t)t^2)dt>c_3c_5c_{10}h^2 \Psi_{x,i,j}(h).
	\end{align*}

	$(iv)$ Denote by $P'_i$ the probability induced by $d(x,\rchi_i)/h$. From \textbf{A3}, \textbf{A5} and \textbf{A6}(i), it follows that
	\begin{align*}
		E(K_i^2\beta_i^\ell)&\geq ch^\ell\int_{[0,1]}K(u)^2u^\ell dP'_i(u)\geq c h^\ell K(\epsilon_1)\int_{[0,\epsilon_1]}K(u)u^\ell dP'_i(u)\\
		&=ch^\ell\int_{0}^{\epsilon_1}\rphi_{x,i}(th,\epsilon_1 h)
		\frac{d}{dt}(K(t)t^\ell )dt >c h^\ell \rphi_{x,i}(h),
	\end{align*}
	for all $\ell \in\{2,4\}$ and some $0<\epsilon_1<1$. Now we turn to the case where $\ell =0$. For $K$ being of type (I) in A5, the result is straightforward. If $K$ is of type (II), then
	\begin{align*}
		E(K_i^2)&\geq K(\epsilon_0)\int_{[0,\epsilon_0]}K(u)dP'_i(u) \geq c\int_{[0,\epsilon_0]}\colc[\bigg]{\int_0^u K'(t)dt+K(0)}dP'_i(u)\\
		&\geq c\int_{[0,\epsilon_0]}\colc[\bigg]{\int_0^u K'(t)dt-\int_0^{\epsilon_0} K'(z)dz}dP'_i(u)\\
		&=c\int_{0}^{\epsilon_0}K'(t)\colc[\big]{\rphi_{x,i}(th,\epsilon_0 h)-\rphi_{x,i}(\epsilon_0 h)}dt\\
		&=-c\int_{0}^{\epsilon_0}K'(t)\rphi_{x,i}(th)dt
		\geq c\int_{0}^{\epsilon_0}\rphi_{x,i}(th)dt>c\rphi_{x,i}(h),
	\end{align*} 
	for some $\epsilon_0\in(0,1)$ and all $n$ sufficiently large. 
 
    $(v)$ If $\ell=0$, then $E(\abs{\varphi_i}^\ell|\rchi_i)=1$. Hence, let $\ell\geq 1$. For all integer $m>\ell$, the conditional Jensen's inequality implies that
    \begin{equation*}
    \colc[\big]{E\parent[\big]{\abs{\varphi_i}^\ell|\rchi_i}}^{m/\ell}\leq E\parent[\big]{\parent[\big]{\abs{\varphi_i}^\ell}^{m/\ell}|\rchi_i}=E\parent[\big]{\abs{\varphi_i}^{m}|\rchi_i}=\gamma_{m,i}(\rchi_i)
    \end{equation*}
    From the continuity of $\gamma_{m,i}$ on $x$, 
    \begin{equation*}
        \lim_{n\to+\infty}I(0\leq d(x,\rchi_i)\leq h)\gamma_{m,i}(\rchi_i)=\lim_{d(x,\rchi_i)\to 0}\gamma_{m,i}(\rchi_i)=\gamma_{m,i}(x).
    \end{equation*}
    Thus, $I(0\leq d(x,\rchi_i)\leq h)\gamma_{m,i}(\rchi_i)=\gamma_{m,i}(x)+o(1)$. Therefore, by \textbf{A4}, for all $n$ large enough
    \begin{equation*}
        I(0\leq d(x,\rchi_i)\leq h)E\parent[\big]{\abs{\varphi_i}^{l}|\rchi_i}\leq 1+\max_{s\in[n]}\gamma_{m,s}(x)+o(1)\leq C.
    \end{equation*}
 \qed

\vspace{1em}

    \noindent\textbf{Proof of Lemma 2}. 
    	Denote the cardinal of a set $A$ by $\#A$. Define, for $s\in\{1,\dotsc,n-\floor{a_n}\}$,
    \begin{equation}\label{e7}
    	E_s=\{(s,s+1),\dotsc,(s,s+\floor{a_n}), (s+\floor{a_n},s),\dotsc,(s+1,s)\}, 
    \end{equation}
    and, for $s\in\{n-\floor{a_n}+1,\dotsc,n-1\}$,
    \begin{equation}\label{e8}
    	G_s=\{(s,s+1),\dotsc,(s,n), (n,s),\dotsc,(s+1,s)\}. 
    \end{equation}
    Note that $\#E_s=2\floor{a_n}$ and $\#G_s=\#G_{n-\floor{a_n}+j}=2(\floor{a_n}-j)$  for $j\in\{1,\dotsc,\floor{a_n}-1\}$. Moreover, $S_1=\parent[\big]{\bigcup_{s=1}^{n-\floor{a_n}}E_s}\cup \parent[\big]{\bigcup_{s=n-\floor{a_n}+1}^{n-1}G_s}$ and the sets in \eqref{e7} and \eqref{e8} are pairwise disjoint. Therefore
    \begin{align*}
    	\#S_1&=\sum_{s=1}^{n-\floor{a_n}} \#E_s+\sum_{s=n-\floor{a_n}+1}^{n-1} \#G_s=2\floor{a_n}(n-2\floor{a_n})+\sum_{j=1}^{\floor{a_n}-1} 2(\floor{a_n}-j)\\
    	&=2\floor{a_n}n-\floor{a_n}^2-\floor{a_n}\overset{a}{\approx}2na_n.
    \end{align*} \qed

\vspace{1em}

\noindent\textbf{Proof of Lemma 3}. 
	Write
	\begin{align*}
		S_{n,\ell ,k}^2(x)&=\sum_{ S_1} \abs[\big]{\cov \parent[\big]{\Lambda_{i}^{(k,\ell )}(x),\Lambda_{j}^{(k,\ell )}(x)}}+\sum_{S_2} \abs[\big]{\cov \parent[\big]{\Lambda_{i}^{(k,\ell )}(x),\Lambda_{j}^{(k,\ell )}(x)}}\\
		&+\sum_{i=1}^n \var\parent[\big]{\Lambda_{i}^{(k,\ell )}(x)} \\
		&\coloneqq J_{1,n}+J_{2,n}+V_{n},
	\end{align*}
	where $S_1\coloneqq\{(i,j): 1\leq\abs{i-j}\leq a_n\}$ and  $S_2\coloneqq\{(i,j): a_n+1\leq\abs{i-j}\leq n-1\}$ are  subsets of  $[n]^2$, and $a_n$ is a positive real sequence diverging to infinity. The rate of  $a_n$ will be specified later on. 
	
	We start with the term $J_{1,n}$. Recall that \textbf{A4} implies that $E\parent[\big]{\abs{\varphi_i\varphi_j}^\ell\mid (\rchi_i,\rchi_j)}\leq 1+\sup_{i\neq j}E\parent[\big]{\abs{\varphi_i\varphi_j}\mid (\rchi_i,\rchi_j)}<C$. Since $E(\Lambda_{i}^{(k,\ell )}(x))=0$, $i\in [n]$, the Law of Iterated Expectations, Lemmas 1(i)(ii)(v), 2 and assumption \textbf{A9} give 
	\begin{align} \notag
		J_{1,n}&=\sum_{S_1}\abs[\big]{E\parent[\big]{\Lambda_{i}^{(k,\ell )}(x)\Lambda_{j}^{(k,\ell )}(x)}}\\ \notag
      &=\frac{1}{h^{2k}}\sum_{S_1}\abs[\big]{E\parent[\big]{K_i K_j (\beta_i\beta_j)^k (\varphi_i\varphi_j)^\ell}-E\parent[\big]{K_i\beta_i^k \varphi_i^\ell}E\parent[\big]{K_j\beta_j^k \varphi_j^\ell}}\\ \notag
		&\leq \frac{1}{h^{2k}}\sum_{S_1}\chave[\Big]{E\parent[\big]{K_i K_j \abs{\beta_i\beta_j}^k E\parent[\big]{\abs{\varphi_i\varphi_j}^\ell\mid (\rchi_i,\rchi_j)}}\\ \notag
			&+E\parent[\big]{K_i\abs{\beta_i}^k E(\abs{\varphi_i}^\ell|\rchi_i)}E\parent[\big]{K_j\abs{\beta_j}^k E(\abs{\varphi_j}^\ell|\rchi_j)}}\\ \notag
		&\leq \frac{C}{h^{2k}}\sum_{S_1}\chave[\Big]{E\parent[\big]{K_i K_j \abs{\beta_i\beta_j}^k}+E\parent[\big]{K_i\abs{\beta_i}^k }E\parent[\big]{K_j\abs{\beta_j}^k}}\\\notag
		&\leq C\sum_{S_1}\chave[\Big]{\colc[\big]{\rphi_{x,i}(h)\rphi_{x,j}(h)}^{1/2+p_{1,i,j}}+\rphi_{x,i}(h)\rphi_{x,j}(h)}\\ \label{ee10}
		&\leq C na_n \max_{s\in[n]}\rphi_{x,s}(h)^{1+2p_\text{min}}= C na_n \rphi_{x}(h)^{1+2p_\text{min}},
	\end{align}
	for all $n$ large enough, where $p_\text{min}=\min_{(i,j)\in [n]^2}p_{1,i,j}\in (1/4,1/2]$.
	
	Along the above lines, we also have that
	\begin{align*}
		V_n=\frac{1}{h^{2k}}\sum_{i=1}^n \chave[\big]{E\parent[\big]{K_i^2 \beta_i^{2k} \varphi_i^{2\ell}}-\colc[\big]{E\parent[\big]{K_i\beta_i^k \varphi_i^\ell}}^2} 
 \leq C\sum_{i=1}^{n}\chave[\big]{ \rphi_{x,i}(h)+\rphi_{x,i}(h)^2}\leq Cn\rphi_{x}(h).
	\end{align*}

	Next, we bound $J_{2,n}$ through the Davydov's inequality\footnote{See Theorem 1.1 of \cite{rio} or Corollary 1.1 of \cite{bosq}. }. By the Binomial Theorem, the Law of Iterated Expectations and Lemmas 1(i)(v), 
	\begin{align}\notag
		E\abs[\big]{\Lambda_{i}^{(k,\ell )}(x)}^q& = 
  \frac{1}{h^{kq}}E\abs[\big]{K_i \beta_i^k\varphi_i^\ell-E\parent[\big]{K_i\beta_i^k \varphi_i^\ell}}^q \leq \frac{1}{h^{kq}}E\chave[\big]{K_i \abs{\beta_i^k\varphi_i^\ell}+E\parent[\big]{K_i\abs{\beta_i^k \varphi_i^\ell}}}^q\\ \notag
  &=\frac{1}{h^{kq}}E\chave[\bigg]{\sum_{j=0}^{q} \binom{q}{j} \colc[\big]{K_i\abs{\beta_i^k\varphi_i^\ell} }^j \colc[\big]{E\parent[\big]{K_i\abs{\beta_i^k\varphi_i^\ell} }}^{q-j} } \\\notag
		&\leq  \frac{C}{h^{kq}}\sum_{j=0}^{q}  E\parent[\big]{K_i^j\abs{\beta_i}^{jk} E\parent[\big]{\abs{\varphi_i}^{\ell j}|\rchi_i}} \colc[\big]{E\parent[\big]{K_i\abs{\beta_i}^k E\parent[\big]{\abs{\varphi_i}^{\ell }|\rchi_i}}}^{q-j} \\ \notag
        & \leq \frac{C}{h^{kq}}\sum_{j=0}^{q}  h^{jk}\rphi_{x,i}(h) \colc[\big]{h^{k}\rphi_{x,i}(h)}^{q-j} \\ 
        \label{eap2}
		&= C\sum_{j=0}^{q} \rphi_{x,i}(h)^{1+q-j} \leq  C\max_{j\in\{0,\dotsc,q\}} \rphi_{x,i}(h)^{1+q-j} \leq C \rphi_{x,i}(h),
	\end{align}
	for all $q\in\N$. Using \eqref{eap2} and \textbf{A8},  the Davydov's inequality gives
	\begin{align*}
		J_{2,n}&= \sum_{S_2} \abs[\big]{\cov \parent[\big]{\Lambda_{i}^{(k,\ell )}(x),\Lambda_{j}^{(k,\ell )}(x)}}\leq C \rphi_{x}(h)^{2/q}\sum_{S_2} \alpha(\abs{i-j})^{1-2/q}\\
		&\leq  C \rphi_{x}(h)^{2/q}\sum_{i=1}^n\sum_{j:(i,j)\in S_2}\abs{i-j}^{-a(1-2/q)}\leq C \rphi_{x}(h)^{2/q}\sum_{i=1}^n\sum_{w=a_n+1}^\infty 2 w^{-a(1-2/q)},
	\end{align*}
	for all $q>2$. Note that 
	\begin{align*}
		\frac{a_n^{1-\rho}}{\rho-1}=\int_{a_n}^{\infty}t^{-\rho}dt=\sum_{w=a_n+1}^\infty \int_{w-1}^{w}t^{-\rho}dt\geq \sum_{w=a_n+1}^\infty \int_{w-1}^{w}w^{-\rho}dt=\sum_{w=a_n+1}^\infty w^{-\rho},
	\end{align*}
	for all $\rho>1$. Put $\rho=a(1-2/q)$. Then $\rho> 3(1-2/q)\geq 1$ if $q\geq 3$ and $a>3$. Thus, given that $a>3$ (\textbf{A8}),
	\begin{align*}
		J_{2,n}\leq C \rphi_{x}(h)^{2/q}n \frac{a_n^{1-a(1-2/q)}}{a(1-2/q)-1}=Cn\rphi_{x}(h),
	\end{align*}
	 by choosing $q\geq 3$ and 
 setting $a_n=\rphi_{x}(h)^{(1-2/q)/(1-a(1-2/q))}$ (which consists in a positive sequence diverging to $\infty$ for all $q\geq 3$). Additionally, $J_{1,n}$ is asymptotically dominated by $J_{2,n}$ for this choice of $a_n$  whenever $q$ is chosen large enough. To see this, let $f(q)\coloneqq\frac{1}{a-(1-2/q)^{-1}}$, given $a>3$.  Then $f$ is a monotone decreasing function on $[3,\infty)$ satisfying $\lim_{q\to\infty} f(q)=1/(a-1)$. By definition, $\forall \epsilon>0:\exists q_\epsilon:\forall q\geq q_\epsilon: f(q)-1/(a-1)<\epsilon$. Take $\epsilon=2p_\text{min}-1/(a-1)$. \textbf{A9} ensures\footnote{By hypothesis,  $1\geq 2p_\text{min}>1/2$ and $(a-1)^{-1}<1/2$.} that $\epsilon>0$ which in turn implies that $f(q)<2p_\text{min}$ as long as $q$ is chosen large enough. Thus, for all $q$ sufficiently large, $J_{1,n}=O\parent[\big]{n\rphi_{x}(h)\rphi_{x}(h)^{2p_\text{min}-f(q)}}=O(n\rphi_{x}(h)o(1))=o\parent[\big]{n\rphi_{x}(h)}$.

	Next, we focus on providing the lower bound for $S_{n,\ell ,k}^2$. Using the Law of Iterated Expectations, the conditional Jensen's inequality and Lemma 1(i)(iv), we have that 
	\begin{align*}
		S_{n,\ell ,k}^2&\geq V_n\geq n \min_{s\in[n]}\var\parent[\Big]{\Lambda_{s}^{(k,\ell )} (x)}\\
		&\geq \frac{n}{h^{2k}}\min_{s\in[n]}\chave[\Big]{E\parent[\big]{K_s^2\beta_s^{2k}\varphi_s^{2\ell}}-\colc[\big]{E\parent[\big]{K_s\abs{\beta_s^{k}\varphi_s^\ell}}}^2}\\
		&= \frac{n}{h^{2k}}\min_{s\in[n]}\chave[\Big]{E\parent[\big]{K_s^2\beta_s^{2k}E\parent[\big]{\abs{\varphi_s}^{2\ell}| \rchi_s}}-\colc[\big]{E\parent[\big]{K_s\abs{\beta_s}^{k}E\parent[\big]{\abs{\varphi_s}^\ell|\rchi_s}}}^2}\\
  &\geq \frac{n}{h^{2k}}\min_{s\in[n]}\chave[\Big]{E\parent[\big]{K_s^2\beta_s^{2k}\parent[\big]{E\parent[\big]{\abs{\varphi_s}^{\ell}| \rchi_s}}^2}-\colc[\big]{E\parent[\big]{K_s\abs{\beta_s}^{k}E\parent[\big]{\abs{\varphi_s}^\ell|\rchi_s}}}^2}\\
		&=\frac{n}{h^{2k}}\min_{s\in[n]}\chave[\Big]{\parent[\big]{E\parent[\big]{K_s^2\beta_s^{2k}}-\colc[\big]{E\parent[\big]{K_s\abs{\beta_s}^{k}}}^2}\underbrace{\parent[\big]{\gamma_{1 ,s}(x)+o(1)}^{2\ell}}_{\geq 0} \ }\\
		&\geq cn\min_{s\in[n]}\chave[\Big]{\parent[\big]{\rphi_{x,s}(h)-C\rphi_{x,s}(h)^2}\parent[\big]{\gamma_{1 ,s}(x)+o(1)}^{2\ell} }\\
          &= cn\min_{s\in[n]}\chave[\Big]{\rphi_{x,s}(h)(1-\underbrace{C\rphi_{x,s}(h)}_{=o(1)})\parent[\big]{\gamma_{1 ,s}(x)+o(1)}^{2\ell} }\\
		&\geq cn\min_{s\in[n]}\rphi_{x,s}(h)\min_{s\in[n]}\parent[\big]{\gamma_{1 ,s}(x)+o(1)}^{2\ell} \\
		&\geq cn\rphi_{x}(h)\min\parent[\Big]{1,\min_{s\in[n]}\parent[\big]{\gamma_{1 ,s}(x)+o(1)}^{2}}\geq cn\rphi_{x}(h),
	\end{align*} 
	for every $n$ sufficiently large. The proof is completed. \qed

\vspace{1em}

\noindent\textbf{Proof of Lemma 4}. The proof of Lemma 4 is similar to that of Lemma 1, and thus omitted.
\vspace{1em}

\noindent\textbf{Proof of Lemma 5}.
	Write
	\begin{align*}
		\sup_{x\in S}S_{n,\ell ,k}^2(x)&=\sup_{x\in S}\sum_{ S_1} \abs[\big]{\cov \parent[\big]{\Lambda_{i}^{(k,\ell )}(x),\Lambda_{j}^{(k,\ell )}(x)}}\\
		&+\sup_{x\in S}\sum_{S_2} \abs[\big]{\cov \parent[\big]{\Lambda_{i}^{(k,\ell )}(x),\Lambda_{j}^{(k,\ell )}(x)}}\\
		&+\sup_{x\in S}\sum_{i=1}^n \var(\Lambda_{i}^{(k,\ell )} )\coloneqq Q_{1,n}+Q_{2,n}+Q_{0,n},
	\end{align*}
	where $S_1=\{(i,j): 1\leq\abs{i-j}\leq a_n\}$ and  $S_2=\{(i,j): a_n+1\leq\abs{i-j}\leq n-1\}$  and $a_n$ is some positive real sequence diverging to infinity. Using similar arguments as those in the proof of  Lemma 3, we can obtain that $Q_{0,n}=O(n\rphi(h))$,
	\begin{equation}\label{equa7}
		Q_{1,n}=O\parent[\big]{na_n\rphi(h)^{1+1/2}},
	\end{equation}  and that 
	\begin{equation}\label{equa8}
		\sup_{x\in S}E\abs[\big]{\Lambda_{i}^{(k,\ell )}(x)}^q=O(\rphi(h)).
	\end{equation}
	In view of the mixing condition \textbf{H5}, the Davydov's inequality and \eqref{equa8} imply that,
	\begin{align*}
		Q_{2,n}&\leq C \rphi(h)^{2/q}\sum_{i=1}^n\sum_{w=a_n+1}^\infty  t^{(1-2/q)w}=C \rphi(h)^{2/q}\sum_{i=1}^n\sum_{w=a_n+1}^\infty  \int_{w-1}^w t^{(1-2/q)w}dz\\
		&\leq C \rphi(h)^{2/q}\sum_{i=1}^n\int_{a_n}^\infty  t^{(1-2/q)z}dz=C n\rphi(h)^{2/q} \frac{t^{(1-2/q)a_n}}{-(1-2/q)\ln t}\\
		&\leq C  n\rphi(h)^{2/q} t^{(1-2/q)a_n},
	\end{align*}
	for any $q>2$ and some $t\in(0,1)$. Set $a_n=\bar C d_n$, with $d_n$ being a positive sequence diverging to infinity and  $\bar C>0$ an arbitrary constant. Moreover, let $C^*>0$ be a any constant satisfying $-\bar C(1-2/q)(d_n/\ln d_n)\ln t\geq C^*$, for all sufficiently large $n$ (so that $\ln d_n>1$). Then
	\begin{align*}
		\ln t^{\bar C(1-2/q)d_n}\leq \ln d_n^{-C^*}\iff t^{\bar C(1-2/q)d_n}\leq  d_n^{-C^*}.
	\end{align*}
	As $d_n/\ln d_n\to\infty$, we can always  find $C^*>0$ that can be made arbitrarily large by taking $\bar C$ suitably large. From this observation, we immediately have that $Q_{2,n}=O(n\rphi_{x}(h)^{2/q} d_n^{-C^*})$. 
	
	Choose $d_n=\rphi(h)^{-1/2}$.
	Therefore, for any $q>2$ we can always pick a suitably large $C^*>0$ to obtain that 
	\begin{align*}
		Q_{1,n}&=O\parent[\big]{ n\rphi(h)^{1+1/2}d_n}= O\parent[\big]{n\rphi(h)},\\
		Q_{2,n}&=O\parent[\big]{n \rphi(h)^{2/q}d_n^{-C^*}}=o\parent[\big]{n\rphi(h)}.
	\end{align*} 
	Thus  $\sup_{x\in S}S_{n,\ell ,k}^2(x)=O(n\rphi_{x}(h))$. The lower bound for $\inf_{x\in S} S_{n,\ell ,k}^2(x)$ can be obtained in the same fashion as in the proof of Lemma 3. \qed

\vspace{1em}

\noindent\textbf{Proof of Lemma 6}.
	Write
	\begin{align*}
		\sup_{x\in S}W_{n,\ell }^2(x)&=\sup_{x\in S}\sum_{ S_1} \abs[\big]{\cov (M_{i}^{\ell }(x),M_{j}^{\ell }(x))}+\sup_{x\in S}\sum_{S_2} \abs[\big]{\cov (M_{i}^{\ell }(x),M_{j}^{\ell }(x))}\\
		&+\sup_{x\in S}\sum_{i=1}^n \var(M_{i}^{\ell }(x) )
		\coloneqq G_{1,n}+G_{2,n}+G_{3,n},
	\end{align*}
	where the sets $S_1$ and $S_2$ are defined exactly in the same manner as in the proof of Lemma 3. For $x\in S$, define the index $j(x)=\argmin_{j\in[N_{r_n}(S)]}d(x,x_j)$ and denote  $B_{x,n}\coloneqq B(x,h)\cup B(x_{j(x)},h)$. Moreover, given a measurable set $E$ and a random variable $\rchi$, define $1_A(\rchi)=1$. Then, by \textbf{H7}(i),
	\begin{align*}
		G_{1,n}&=\parent[\bigg]{\frac{r_n}{h}}^2 \sup_{x\in S}\sum_{S_1}\abs[\Big]{E\colc[\big]{\abs{\varphi_i\varphi_j}^\ell 1_{B_{x,n}}(\rchi_i)1_{B_{x,n}}(\rchi_j)}\\ 
			&-E\colc[\big]{\abs{\varphi_i}^\ell 1_{B_{x,n}}(\rchi_i)}E\colc[\big]{\abs{\varphi_j}^\ell 1_{B_{x,n}}(\rchi_j)}}\\ 
		&\leq C\parent[\bigg]{\frac{r_n}{h}}^2\sup_{x\in S}\sum_{S_1}\chave[\big]{ E\colc[\big]{1_{B_{x,n}}(\rchi_i)1_{B_{x,n}}(\rchi_j)}+E\colc[\big]{1_{B_{x,n}}(\rchi_i)}E\colc[\big]{1_{B_{x,n}}(\rchi_j)}}.
	\end{align*}
	In general, for any measurable sets $A$ and $B$, we have that $1_{A\cup B}(\rchi)\leq 1_A(\rchi)+1_B(\rchi)$. Therefore, $$1_{B_{x,n}}(\rchi_i)1_{B_{x,n}}(\rchi_j)\leq [1_{B(x,h)}(\rchi_i)+1_{B(x_{j(x)},h)}(\rchi_i)][1_{B(x,h)}(\rchi_j)+1_{B(x_{j(x)},h)}(\rchi_j)],$$ which implies that
	\begin{align*}
		E\colc[\big]{1_{B_{x,n}}(\rchi_i)1_{B_{x,n}}(\rchi_j)}&\leq \Psi_{x,i,j}(h)+\Psi_{x_{j(x)},i,j}(h)+\Psi_{x,x_{j(x)},i,j}(h)+\Psi_{x_{j(x)},x,i,j}(h)\\
		&\leq 4C \sup_{x\in S}\max_{s\in [n]}\rphi_{x,s}(h)^{1+2p_{\min}}\leq C \rphi(h)^{1+2p_{\min}},
	\end{align*}
	using \textbf{H1} and \textbf{H6}, for all $n$ large enough, where $p_{\min}=\min_{(i,j)\in [n]^2} p_{1,i,j}$. Similarly,
	\begin{align*}
		E\colc[\big]{1_{B_{x,n}}(\rchi_i)}E\colc[\big]{1_{B_{x,n}}(\rchi_j)}&\leq \parent[\big]{\rphi_{x,i}(h)+\rphi_{x_j(x),i}(h)}\parent[\big]{\rphi_{x,j}(h)+\rphi_{x_j(x),j}(h)}\\
		&\leq C \rphi(h)^2,
	\end{align*}
	uniformly on $x\in S$. Thus, for $n$ sufficiently large,
	\begin{align}\notag
		G_{1,n}\leq C \parent[\bigg]{\frac{r_n}{h}}^2\sum_{S_1}\chave[\big]{\rphi(h)^{1+2p_{\min}}+\rphi(h)^2}\leq C \parent[\bigg]{\frac{r_n}{h}}^2 na_n\rphi(h)^{1+2p_{\min}}.
	\end{align}
	Next, we bound $G_{2,n}$ using the Davydov's inequality. For this, note that for any $q>2$,
	\begin{align*}
		\sup_{x\in S} E\abs{Z_i^\ell (x)}^q&=\parent[\bigg]{\frac{r_n}{h}}^q\sup_{x\in S}E\abs[\big]{\chave[\big]{\abs{\varphi_i}^\ell 1_{B_{x,n}}(\rchi_i) - E\parent[\big]{\abs{\varphi_i}^\ell 1_{B_{x,n}}(\rchi_i)} }^q}\\
		&=\parent[\bigg]{\frac{r_n}{h}}^q\sup_{x\in S} E\abs[\bigg]{\sum_{k=0}^q \binom{q}{k}\chave{\abs{\varphi_i}^{\ell k}1_{B_{x,n}}(\rchi_i)}\chave{E(1_{B_{x,n}}(\rchi_i)\gamma_{\ell ,i}(\rchi_i))}^{q-k} }\\
		&\leq C\parent[\bigg]{\frac{r_n}{h}}^q\sum_{k=0}^q \sup_{x\in S} E(1_{B_{x,n}}(\rchi_i))\chave{E(1_{B_{x,n}}(\rchi_i))}^{q-k}\\
		&\leq C\parent[\bigg]{\frac{r_n}{h}}^q\sum_{k=0}^q\rphi(h)^{1+q-k}\leq  C\parent[\bigg]{\frac{r_n}{h}}^q\rphi(h).
	\end{align*}
	Hence, for all $q>2$ and some $t\in(0,1)$, by the Davydov's inequality and \textbf{H5}, it follows that
	\begin{align*}
		G_{2,n}\leq C\parent[\bigg]{\frac{r_n}{h}}^2\rphi(h)^{2/q}\sum_{S_2}\alpha(\abs{i-j})^{1-2/q}\leq C\parent[\bigg]{\frac{r_n}{h}}^2\rphi(h)^{2/q}t^{(1-2/q)a_n} .
	\end{align*}
	Along the lines of the proof of Lemma 5, we set $a_n=C_1\rphi(h)^{-1/2}$ to obtain that $G_{1,n}=O\parent[\big]{(r_n/h)^2n\rphi(h)}$ and that $	G_{2,n}=O\parent[\big]{(r_n/h)^2n\rphi(h)\colc[\big]{\rphi(h)^{2/q-1+C'/2}}}=o\parent[\big]{(r_n/h)^2n\rphi(h)}$ for some suitably large $C'>0$.
	
	The last term $G_{3,n}$ can be bounded as follows
	\begin{align*}
		G_{3,n}&\leq C\parent[\bigg]{\frac{r_n}{h}}^2\sum_{i=1}^n\chave[\bigg]{ \sup_{x\in S}E\parent[\big]{1_{B(x,h)}(\rchi_i)+1_{B(x_{j(x)},h)}(\rchi_i)}\\
			&+\sup_{x\in S}\colc[\big]{E\parent[\big]{1_{B(x,h)}(\rchi_i)+1_{B(x_{j(x)},h)}(\rchi_i) }}^2}\\
		&\leq C\parent[\bigg]{\frac{r_n}{h}}^2n\chave[\big]{\rphi(h)+\rphi(h)^2}\leq C\parent[\bigg]{\frac{r_n}{h}}^2n\rphi(h) .
	\end{align*}
	Thus, $\sup_{x\in S}W_{n,\ell }^2(x)=O\parent[\big]{(r_n/h)^2n\rphi(h)}$.
	Now, we focus on the uniform lower bound. It holds that
	\begin{align*}
		\inf_{x\in S} W_{n,\ell }^2(x)&\geq \inf_{x\in S} \sum_{1}^{n}\var \parent[\big]{Z_i^\ell (x)}\\
		&\geq cn \parent[\bigg]{\frac{r_n}{h}}^2\inf_{x\in S}\chave[\Big]{
			\min_{s\in[n]}\colc[\big]{
				E\parent[\big]{1_{B_{x,n}}(\rchi_i) }  -\parent[\big]{E\parent[\big]{1_{B_{x,n}}(\rchi_i) }}^2
			}
		}\\
		&\geq c n \parent[\bigg]{\frac{r_n}{h}}^2\chave[\bigg]{\inf_{x\in S}\min_{s\in[n]}E\parent[\big]{1_{B_{x,n}}(\rchi_i) } -\sup_{x\in S}\max_{s\in[n]}\parent[\big]{E\parent[\big]{1_{B_{x,n}}(\rchi_i) }}^2 }.
	\end{align*}
	In general, for measurable sets $A$ and $B$, we have that
	$1_{A\cup B}(\rchi)=1_A(\rchi)+1_B(\rchi)-1_{A\cap B}(\rchi)\geq 1_A(\rchi)+1_B(\rchi)-1_{B}(\rchi)=1_A(\rchi)$. Then 
	\begin{equation*}
		\inf_{x\in S} W_{n,\ell }^2(x)\geq  c n \parent[\bigg]{\frac{r_n}{h}}^2\colc[\big]{\rphi(h)-C\rphi(h)^2}\geq  c n \parent[\bigg]{\frac{r_n}{h}}^2\rphi(h).
	\end{equation*}
Finally, 
\begin{align*}
	\sup_{x\in S} \max_{i\in[n]}ET^\ell _i(x)&\leq C\frac{r_n}{h}\sup_{x\in S} \max_{i\in[n]}E\parent[\big]{1_{B(x,h)}(\rchi_i)+1_{B(x_{j(x)},h)}(\rchi_i)}\\
	&\leq C\frac{r_n}{h}\rphi(h).
\end{align*}
 \qed

\vspace{1em}

\section{Theoretical details} \label{sec:3}
Our framework considers a collection of random pairs $(Y_i,\rchi_i)$, $i\in\{1,\dotsc,n\}$,  on $(\Omega,\A,P)$  taking values in $\R\times \F$, where $\F$ is some semimetric space equipped with a semimetric (sometimes called by a pseudometric) $d$. Throughout the paper, $d(x,\rchi_i)$ is implicitly assumed to be a random variable for any $i\in\{1,\dotsc,n\}$ and some point $x\in\F$. The following proposition clarifies the measurabilty of $d(x,\rchi_i)$.

\begin{prop}\label{prop1}
	Let $x\in\F$ and $\rchi_i:(\Omega,\A)\to (\F,\A')$, where $\A'=\B(\F)$ is the sigma-algebra generated by the topology on $(\F,d)$ induced by $d$. Then $d(x,\rchi_i)$ is a random variable.
\end{prop}
\noindent\textbf{Proof of Proposition \ref{prop1}}
	Let $\tau_d$ denote the topology on the semimetric space $(\F,d)$ induced by $d$ and $B_r(x)$ denote the open ball of radius $r$ and center $x$ with respect to $d$. Then it can be shown that $\{B_{1/j}(x)\}_{j\in\N*}$ is a local basis for  $\tau_d$, which in turn implies that $(\F,\tau_d)$ is first countable.
	
	Let $y\in(\F,d)$ and $\{y_j\}\subseteq\F$ be a sequence such that $y_j\to y$ as $j\to\infty$. Let $\epsilon>0$ be arbitrary. Then $\exists j_0:\forall j>j_0:d(y,y_j)<\epsilon$. Hence, we have that for $j>j_0$
	\begin{align}\label{e1}
		d(y_j,x)&\leq d(y_j,y)+d(y,x)<d(y,x)+\epsilon, \text{ and}\\\label{e2}
		d(x,y)&\leq d(x,y_j)+d(y_j,y)<d(x,y_j)+\epsilon.
	\end{align}
	The combination of \eqref{e1} and \eqref{e2} implies that $\abs{d(x,y)-d(x,y_j)}<\epsilon$. Since $\epsilon>0$ is arbitrary, $d(x,y_j)\to d(x,y), j\to\infty,$ and so the function $d(\cdot,x)$ is sequentially continuous.
	
	It is known that \citep[Theorem 21.3 of][]{munkres} a sequentially continuous function $d(\cdot,x):\F\to\R$ with a first countable domain is continuous.
	
	Next, consider the set $\mathcal W=\{A\subseteq\R: d(\cdot,x)^{-1}[A]\in\B(\F)\}$. It is easily checked that $\mathcal W$ is a sigma-algebra. By the continuity of $d(\cdot,x)$, $\mathcal W$ contains all the open sets in $\R$, that is, $\tau\subseteq \mathcal W$ where $\tau$ is the standard topology on $\R$. By the minimality of generated sigma-algebras, $\sigma(\tau)=\B(\R)\subseteq \mathcal W$, which means that the inverse image of any Borel set in $\R$ is also in $\B(\F)$. Thus $d(\cdot,x)$ is $\B(\F)-\B(\R)$ measurable. Therefore the composition of $d(\cdot,x): (\F,\B(\F))\to(\R,\B(\R))$ and $\rchi_i:(\Omega,\A)\to(\F,\B(\F))$ is $\A-\B(\R)$ measurable, as desired.
\qed

In general, the strong mixing dependence of $\{Y_i,\rchi_i\}$ affects the treatment of the associated sum of covariances, variance of partial sums and exponential inequalities. In our work, the Davydov's  and the Fuk-Nagaev's inequalities are used to bound covariances and probabilities of sums, respectively. More precisely, the application of such inequalities are possible in our proofs as long as the sequence $\{d(x,\rchi_i)\}_{i\in\N*}$ inherits the assumed strong mixing properties of $\{Y_i,\rchi_i\}_{i\in\N^*}$. The next result shows the relation between the mixing dependence of $\{(Y_i,\rchi_i)\}$ and the  mixing dependence of $\{d(x,\rchi_i)\}$.

\begin{prop}\label{prop2}
	Let $x\in\F$. If $\{(Y_i,\rchi_i)\}$ is strongly mixing with mixing coefficient $\alpha$ and $(\F,\B(\F))$ is a measurable space, then the sequence $\{d(x,\rchi_i)\}$ is also strongly mixing with mixing coefficient $\alpha$.
\end{prop}
\noindent\textbf{Proof of Proposition \ref{prop2}}
	For any natural numbers $0<w\leq k$, it holds that
	\begin{align*}
		\mathcal F_w^k&\coloneqq \sigma((Y_i,\rchi_i),w\leq i\leq k)=\sigma\parent[\bigg]{\bigcup\limits_{i=w}^k \sigma((Y_i,\rchi_i))}=\sigma\parent[\bigg]{\colc[\bigg]{\bigcup\limits_{i=w}^k \sigma(Y_i)}\cup \colc[\bigg]{\bigcup\limits_{i=w}^k \sigma(\rchi_i)}}\\&
		\supseteq \sigma\parent[\bigg]{\bigcup\limits_{i=w}^k \sigma(\rchi_i)}=\sigma(\rchi_i,w\leq i\leq k)\coloneqq \mathcal G_w^k.
	\end{align*}
	By Proposition \ref{prop1}, $\{d(x,\rchi_i)\}$  is a sequence of random variables. Clearly, $\sigma(d(x,\rchi_i))=\chave[\big]{\rchi_i^{-1}\circ d(\cdot,x)^{-1}[A]:A\in\B(\R)}\subseteq \chave[\big]{\rchi_i^{-1}[B]:B\in\B(\F)}=\sigma(\rchi_i)$, and hence, $\mathcal G_w^k\supseteq \sigma(d(x,\rchi_i),w\leq i\leq k)\coloneqq \mathcal D_w^k$. From the transitivity of subsets, $\mathcal F_w^k\supseteq \mathcal D_w^k$. Thus
	\begin{equation*}
		\sup_{k\in\N} \chave[\big]{\abs{P(A)P(B)-P(A\cap B)}:A\in \mathcal D_1^k, B\in\mathcal D_{k+j}^\infty}\leq \alpha(j).
	\end{equation*}
	\qed

In the time series context, the notion of mixing sequences is connected to the asymptotic independence of random variables. A mixing time series can be regarded as a sequence of random variables for which the past and the future is asymptotically independent. In assuming that $\{(Y_i,\rchi_i)\}$ is strongly mixing, we are indirectly restricting the relation between  $\Psi_{x,i,j}(r_1,r_2,r_3,r_4)$ and $\rphi_{x,i}(r_1,r_2)\rphi_{x,j}(r_3,r_4)$, whenever $\abs{i-j}$ is relatively \textcolor{violet}{``large''}, where for an easy reference
\begin{align*}
	&\rphi_{x,i}(r_1,r_2)=P(r_1\leq d(x,\rchi_i)\leq r_2),\\
	&\Psi_{x,i,j}(r_1,r_2,r_3,r_4)=P(r_1\leq d(x,\rchi_i)\leq r_2,r_3\leq d(x,\rchi_j)\leq r_4),
\end{align*}
with the notations $\rphi_{x,i}(r_2)\coloneqq \rphi_{x,i}(0,r_2)$ and $\Psi_{x,i,j}(r_2)\coloneqq \Psi_{x,i,j}(0,r_2,0,r_2)$ for any $r_1,r_2,r_3,r_4>0$.

 In view of the work of \cite{leulmi}, it is of interest to ask whether it is compatible to assume that $\{(Y_i,\rchi_i)\}$ is strongly mixing and $\Psi_{x,i,j}(h)=\Theta\parent[\big]{\colc{\phi_{x,i}(h)\phi_{x,j}(h)}^{1/2+p}}$ for some $p\neq 1/2$ and all $i,j$. 

\begin{prop}\label{prop33}
	Let $x\in\F$. If $\{(Y_i,\rchi_i)\}_{i\in\N^*}$ is strongly mixing, then 
	\begin{equation}\label{p33}
		\Psi_{x,i,j}(h)=\Theta\parent[\big]{\phi_{x,i}(h)\phi_{x,j}(h)}, n\to\infty,
	\end{equation}
	for any $i,j\in[n]$ such that $\abs{i-j}\geq b_n$ and $\alpha(b_n)=o \parent[\big]{ \phi_{x,i}(h)\phi_{x,j}(h)}$.  In particular, if $\{(Y_i,\rchi_i)\}_{i\in\N^*}$ is strongly mixing with arithmetic rate $a>3$ or with geometric rate, then \textup{\eqref{p33}} holds for $b_n= \colc[\big]{\min_{s\in[n]}\rphi_{x,s}(h)}^{-1}$.
\end{prop}
\noindent\textbf{Proof of Proposition \ref{prop33}}
	Let $A=\{0\leq d(x,\rchi_i)\leq h\}$ and $B=\{0\leq d(x,\rchi_j)\leq h\}$. Clearly $A\in\sigma(d(x,\rchi_i))$ because $A=d(x,\rchi_i)^{-1}([0,h])$, and by the same argument, $B\in\sigma(d(x,\rchi_j))$. From Proposition \ref{prop2}, it immediately follows that $\alpha(\abs{i-j})\geq \abs{P(A\cap B)-P(A)P(B)}$, and so,
	\begin{align*}
		P(A\cap B)&\leq  P(A)P(B)+\alpha(\abs{i-j}), \text{ and }\\
		P(A\cap B)&\geq  P(A)P(B)-\alpha(\abs{i-j}).
	\end{align*}
	
	Since $\{\alpha(k)\}$ is a nonincreasing sequence and $b_n$ is such that $\alpha(b_n)=o \parent[\big]{ \phi_{x,i}(h)\phi_{x,j}(h)}$, for any $\abs{i-j}>b_n$ 
	\begin{align*}
		\Psi_{x,i,j}(h)&\geq \rphi_{x,i}(h)\rphi_{x,j}(h)-\alpha(b_n)=\rphi_{x,i}(h)\rphi_{x,j}(h)\colc[\bigg]{1-\frac{\alpha(b_n)}{\rphi_{x,i}(h)\rphi_{x,j}(h)}}\\
		&\geq c \rphi_{x,i}(h)\rphi_{x,j}(h),
	\end{align*}
	whenever $n$ is sufficiently large. Likewise, for any $\abs{i-j}>b_n$ and $n$ large enough
	\begin{equation*}
		\Psi_{x,i,j}(h)\leq \rphi_{x,i}(h)\rphi_{x,j}(h)+\alpha(b_n)\leq C \rphi_{x,i}(h)\rphi_{x,j}(h).
	\end{equation*}
	
	For the second result, it suffices to show that $\alpha\parent[\big]{\colc[\big]{\min_{s\in[n]}\rphi_{x,s}(h)}^{-1}}=o(\rphi_{x,i}(h)\rphi_{x,j}(h))$. Suppose that $\{(Y_i,\rchi_i)\}_{i\in\N^*}$ is strongly mixing with arithmetic rate $a>3$. Then
	\begin{align*}
		\frac{\alpha\parent[\big]{\colc[\big]{\min_{s\in[n]}\rphi_{x,s}(h)}^{-1}}}{\rphi_{x,i}(h)\rphi_{x,j}(h)}\leq C \frac{\min_{s\in[n]}\rphi_{x,s}(h)^a}{\rphi_{x,i}(h)\rphi_{x,j}(h)}\leq C \min_{s\in[n]}\rphi_{x,s}(h)^{a-2}=o(1).
	\end{align*}
	Now, assume the geometric mixing rate: $\alpha(k)\leq Ct^k, t\in(0,1)$. It holds that
	\begin{align*}
		\frac{\alpha\parent[\big]{\colc[\big]{\min_{s\in[n]}\rphi_{x,s}(h)}^{-1}}}{\rphi_{x,i}(h)\rphi_{x,j}(h)}\leq C \frac{t^{1/\min_{s\in[n]}\rphi_{x,s}(h)}}{\rphi_{x,i}(h)\rphi_{x,j}(h)}\leq C \frac{t^{1/\min_{s\in[n]}\rphi_{x,s}(h)}}{(\min_{s\in[n]}\rphi_{x,s}(h))^2}=o(1),
	\end{align*}
	since the geometric decay rate dominates any polynomial rate of decay.
\qed

In the context of equally distributed and mixing data, note that if it is assumed that $\ln n/(n\rphi_{x,1}(h))=o(1)$, then there will always be indices $i,j\in[n]: \abs{i-j}\geq 1/\rphi_{x,1}(h)$ for suitably large $n$. For these indices, we cannot assume $C'\rphi_{x,1}(h)^{1/2+p} \leq \Psi_{x,i,j}(h)\leq C\phi_{x,1}(h)^{1/2+p}$, for some $p\neq 1/2$ and any large $n$, since Proposition \ref{prop3} shows that $\Psi_{x,i,j}(h)=\Theta\parent[\big]{\rphi_{x,1}(h)^2}$.

It is known that the concentration properties of $\rchi_i$ affect the convergence rates of kernel estimators \citep[see Chapter 13 of][]{ferraty}. In particular, processes of fractal order showed good convergence rates when compared to other types of processes (e.g. exponential-type). The generality of a fractal-type process can be seen from Lemma 13.6 of \cite{ferraty}, where it is shown  that we can construct a semimetric $d$ for which any process, valued in a separable Hilbert space with inner product, is necessarily of fractal-type.

For the next proposition, we recall some notations of Section 3 of the paper. Let $x$ be an element in $\F$, $h>0$ be a bandwidth such that $h=o(1)$ and define $\rphi_{x,i}(r_1,r_2)=P(r_1\leq d(x,\rchi_i)\leq r_2)$ with $\rphi_{x,i}(r_2)\coloneqq P(0\leq d(x,\rchi_i)\leq r_2)$ for any $r_1,r_2>0$. The generic positive small and large constants, respectively, $c$ and $C$, are also used from now on.

\begin{prop}\label{prop3}
	Suppose that $\rchi_i$ is of fractal order\footnote{We say that a random variable $\rchi_i$ is of fractal order $\theta$ with respect to a semimetric $d$ if $\rphi_{x,i}(\epsilon)\overset{a}{\approx}C_x\epsilon^\theta, \epsilon\to 0$, for some constant $C_x>0$.} $\theta>0$, $L>0$ and that the kernel function is defined by
	\begin{itemize}
		\item $K(t)=L^{-(\alpha+1)}(1+1/\alpha)(L^\alpha-t^\alpha)1_{[0,L]}(t), \ \alpha>0$; or
		\item $K(t)=1_{[0,L]}(t)/L.$
	\end{itemize}
	Then for $n$ sufficiently large, it holds that 
	\begin{align}\label{e3}
		&\int_0^\epsilon \rphi_{x,i}(zh,\epsilon h)\frac{d}{dz}(z^\ell K(z))dz>c\rphi_{x,i}(h), \ 0<\epsilon\leq L, \ \ell \in\N^*; \\\label{e4}
		&\int_0^h \rphi_{x,i}(z)^\delta dz>ch\rphi_{x,i}(h)^\delta, \text{ for any } \delta>0;
		\\\label{ee6}
		&\int_0^1  \rphi_{x,i}(zh, h)^{1/2+p}\frac{d}{dz}(z^2K(z))dz>c\rphi_{x,i}(h)^{1/2+p},\text{ for any }0<p<1, \text{ and }L=1.
	\end{align} 
In addition, if Assumption A9 (Section 3.2) is adapted to 
\vspace{1em}
\begin{enumerate}
\item [\textbf{A9'}.] $\forall i,j\in[n]: \forall w,t\in[0,1]: \exists 1/4<p_{1,i,j}\leq 1/2\leq p_{2,i,j}< 3/4$ such that
\begin{equation*}
	[\rphi_{x,i}(th,h)\rphi_{x,j}(wh,h)]^{1/2+p_{2,i,j}}\leq \Psi_{x,i,j}(th,h,wh,h)\leq  [\rphi_{x,i}(th,h)\rphi_{x,j}(wh,h)]^{1/2+p_{1,i,j}},
\end{equation*}
\end{enumerate}
then Assumption A10 holds.
\end{prop}
\noindent\textbf{Proof of Proposition \ref{prop3}}
	For simplicity, put $k_0=L^{-(\alpha+1)}(1+1/\alpha)>0$. For the polynomial-type kernel, we have that on $[0,L]$
	\begin{align*}
		\rphi_{x,i}(zh,\epsilon h)\frac{d}{dz}(z^\ell K(z))&=C_xh^\theta k_0\colc[\big]{\ell L ^\alpha\epsilon^\theta z^{\ell -1}-(\ell +\alpha)\epsilon^\theta z^{\ell +\alpha-1}\\
			&-\ell L^\alpha z^{\ell +\theta-1}+(\ell +\alpha)z^{\ell +\alpha+\theta-1}} +o(h^\theta),
	\end{align*}
	and so,
	\begin{align*}
		\int_{0}^{\epsilon}\rphi_{x,i}(zh,\epsilon h)\frac{d}{dz}(z^\ell K(z))dz&=C_xh^\theta k_0\epsilon^{\ell +\theta}\colc[\bigg]{L^\alpha-\epsilon^\alpha-\frac{\ell L^\alpha}{\ell +\theta}+\frac{\ell +\alpha}{\ell +\alpha+\theta}\epsilon^\alpha}\\
		&+o(h^\theta)\\
		&=C_xh^\theta k_0 \epsilon^{\ell +\theta}\theta \chave[\bigg]{\frac{\alpha L^\alpha+(\ell +\theta)(L^\alpha-\epsilon^\alpha)}{(\ell +\theta)(\ell +\theta+\alpha)}}+o(h^\theta)\\
		&\coloneqq C_x h^\theta M+o(h^\theta).
	\end{align*}
	Since $L\geq \epsilon$, $M>0$. Then
	\begin{equation*}
		\frac{1}{\rphi_{x,i}( h)}\int_{0}^{\epsilon}\rphi_{x,i}(zh,\epsilon h)\frac{d}{dz}(z^\ell K(z))dz\to M, n\to\infty,
	\end{equation*}
	and thus, there is $c>0$ such that \eqref{e3} holds for $n$ large enough. Similarly, if $K$ is the uniform kernel, then it can be shown that
	\begin{equation*}
		\int_{0}^{\epsilon}\rphi_{x,i}(zh,\epsilon h)\frac{d}{dz}(z^\ell K(z))dz=C_xh^\theta \frac{\theta}{\ell +\theta}\frac{\epsilon^{\ell +\theta}}{L}+o(h^\theta)
	\end{equation*}
	implying \eqref{e3} for sufficiently large $n$.
	
	Next,
	\begin{equation*}
		\int_{0}^{h}\rphi_{x,i}(z)^\delta dz=C_x\frac{h^{\delta\theta+1}}{1+\delta\theta}+o(h^{\delta\theta+1})
	\end{equation*}
	which implies \eqref{e4} for all $n$ large enough.

	Next, we focus on the inequality \eqref{ee6}. Suppose the kernel funtion is of polynomial-type. By applying the integration by parts technique, we obtain that
	\begin{align}\notag
		\int_0^1  \rphi_{x,i}(zh, h)^{1/2+p}\frac{d}{dz}&(z^2K(z))dz\\\notag
		\overset{a}{\approx}& \ C_x\frac{\alpha+1}{\alpha}h^{\theta(1/2+p)}\int_{0}^{1}\parent[\big]{1-z^\theta}^{1/2+p}\colc[\big]{2z-(2+\alpha)z^{\alpha+1}}dz\\\notag
		=&\ C_x\frac{\alpha+1}{\alpha}h^{\theta(1/2+p)}\chave[\bigg]{\parent[\big]{1-z^\theta}^{1/2+p}\parent[\big]{z^2-z^{2+\alpha}}\Big|_0^1\\\notag
        &\qquad+\int_0^1\parent[\bigg]{\frac{1}{2}+p}\parent[\big]{1-z^{\theta}}^{p-1/2}\theta z^{\theta-1}\parent[\big]{z^2-z^{\alpha+2}}dz}\\\label{ee8}
		=&\ C_x\frac{\alpha+1}{\alpha}h^{\theta(1/2+p)}\parent[\bigg]{\frac{1}{2}+p}\theta \int_0^1\parent[\big]{1-z^{\theta}}^{p-1/2}z^{\theta-1}\parent[\big]{z^2-z^{\alpha+2}}dz.
	\end{align}
	
	Let $g(z)=z^2-z^{\alpha+2}=z^2\parent{1-z^\alpha}\geq0$. We can check that $[2/(2+\alpha)]^{1/\alpha}\in\parent[\big]{1/\sqrt{e},1}$ is the unique critical point of $g$ on $(0,1)$ and $g$ is increasing on $\parent[\big]{[1/(2+\alpha)]^{1/\alpha},[2/(2+\alpha)]^{1/\alpha}}$. Therefore, by applying the substitution $w=1-z^\theta$, 
	\begin{align*}
		&\parent[\bigg]{\frac{1}{2}+p}\theta \int_0^1\parent[\big]{1-z^{\theta}}^{p-1/2}z^{\theta-1}g(z)dz\\
		&\ > \parent[\bigg]{\frac{1}{2}+p}\theta g\parent[\big]{[1/(2+\alpha)]^{1/\alpha}} \int_{[1/(2+\alpha)]^{1/\alpha}}^{[2/(2+\alpha)]^{1/\alpha}}\parent[\big]{1-z^{\theta}}^{p-1/2}z^{\theta-1}dz\\
		&\ = \parent[\bigg]{\frac{1}{2}+p} g\parent[\big]{[1/(2+\alpha)]^{1/\alpha}}\int_{1-[1/(2+\alpha)]^{\theta/\alpha}}^{1-[2/(2+\alpha)]^{\theta/\alpha}}-w^{p-1/2}dw\\
		&\ =\chave[\bigg]{\parent[\bigg]{\frac{1}{2+\alpha}}^{2/\alpha}-\parent[\bigg]{\frac{1}{2+\alpha}}^{(2+\alpha)/\alpha}}
		\chave[\bigg]{\colc[\bigg]{1-\parent[\bigg]{\frac{1}{2+\alpha}}^{\theta/\alpha}}^{1/2+p}-\colc[\bigg]{1-\parent[\bigg]{\frac{2}{2+\alpha}}^{\theta/\alpha}}^{1/2+p}}\\
		&\ \coloneqq M\frac{\alpha}{\alpha+1}>0.
	\end{align*}
	
	Hence, 
	\begin{equation*}
		L\coloneqq \lim_{n\to\infty}\frac{1}{\rphi_{x,i}(h)^{1/2+p}}\int_0^1  \rphi_{x,i}(zh, h)^{1/2+p}\frac{d}{dz}(z^2K(z))dz>M>0,
	\end{equation*}
	which implies \eqref{ee6}, for all $n$ sufficiently large.
	
	To show the last result, note that there must have some $p\in(1/4,3/4)$ such that $$[\rphi_{x,i}(th,h)\rphi_{x,j}(wh,h)]^{1/2+p}= \Psi_{x,i,j}(th,h,wh,h),$$
	for any $i,j\in[n], w,t\in[0,1]$ if A9' holds. Therefore, by \eqref{ee6},
	\begin{align*}
		\int_0^{1} \Psi_{x,i,j}(zh,h,0,h)\frac{d}{dz}(z^2K(z))dz&=\int_0^{1} [\rphi_{x,i}(zh,h)\rphi_{x,j}(h)]^{1/2+p}\frac{d}{dz}(z^2K(z))dz\\
		&>c[\rphi_{x,i}(h)\rphi_{x,j}(h)]^{1/2+p}=c\Psi_{x,i,j}(h).
	\end{align*}
 If $K$ is of type (II), then by \eqref{ee6} and \eqref{e4}, we have that
 \begin{align*}
 	-\iint_0^{1}& \Psi_{x,i,j}(zh,h,0,wh)\frac{d}{dz}(z^2K(z))K'(w)dzdw\\
 	&=-\iint_0^{1} [\rphi_{x,i}(zh,h)\rphi_{x,j}(wh)]^{1/2+p}\frac{d}{dz}(z^2K(z))K'(w)dzdw\\
 	&>c\int_0^1 [\rphi_{x,i}(zh,h)]^{1/2+p}\frac{d}{dz}(z^2K(z))dz \int_0^1
 	[\rphi_{x,j}(wh)]^{1/2+p} dw\\
 	&>c [\rphi_{x,i}(h)]^{1/2+p}h^{-1} \int_0^h
 	[\rphi_{x,j}(t)]^{1/2+p} dt\\
 	&>c [\rphi_{x,i}(h)\rphi_{x,j}(h)]^{1/2+p}=c\Psi_{x,i,j}(h).
 \end{align*}
\qed

Proposition \ref{prop3} shows that the hypotheses \textbf{A6}(i) and \textbf{A10} of the paper hold for general fractal-type processes when popular kernel functions (such as the triangle, quadratic, cubic or uniform asymmetric kernels) are chosen and \textbf{A9} is strengthened appropriately. The condition \textbf{A9} is obtained from  \textbf{A9'} if in particular we set $w=t=0$. 

In what follows, we will present the basic properties of $\Oac$ and $\oac$.
\begin{prop}\label{propdan}
	Let $\{a_n\}$ and $\{b_n\}$ be two real sequences and let $X_n=\Oac(a_n)$. Then
	\begin{enumerate}
		\item[(i)] $Z_n=\Oac(b_n) \text{ or } Z_n=O(b_n)\implies X_n Z_n=\Oac(a_nb_n)$;
		\item[(ii)] $Z_n=\oac(b_n) \text{ or } Z_n=o(b_n)\implies X_n Z_n=\oac(a_nb_n)$;
		\item[(iii)] $Z_n=\Oac(b_n) \text{ or } Z_n=O(b_n) \implies X_n +Z_n=\Oac(a_n+b_n)$;
		\item[(iv)] $Z_n=\oac(b_n) \text{ or } Z_n=o(b_n) \implies X_n +Z_n=\Oac(a_n+b_n)$;
		\item[(v)]  $a_n=O(b_n)\implies X_n=\Oac(b_n)$;
		\item[(vi)] $a_n=o(b_n)\implies X_n=\oac(b_n)$;
		\item[(vii)] $\Oac(a_n)=a_n\Oac(1)$.
	\end{enumerate}
\end{prop}
\noindent\textbf{Proof of Proposition \ref{propdan}}
	Denote by $\epsilon_x>0$ the number such that $\sum_{n\in\N}P(\abs{X_n/a_n}>\epsilon_x)<\infty$.
	
	(i) If $Z_n=\Oac(b_n)$, then $\exists \epsilon_z>0$ such that $\sum_{n\in\N}\{P(\abs{X_n/a_n}>\epsilon_x)+P(\abs{Z_n/b_n}>\epsilon_z)\}<\infty$. By taking $\epsilon=\epsilon_x\epsilon_z$, we get
	\begin{align*}
		\sum_{n\in\N}P(\abs{X_nZ_n}>\epsilon \abs{a_nb_n})&\leq 	\sum_{n\in\N}\chave{P(\abs{X_nZ_n}>\epsilon \abs{a_nb_n},\abs{X_n}\leq\epsilon_x\abs{a_n})+P(\abs{X_n}>\epsilon_x\abs{a_n})}\\
		&\leq \sum_{n\in\N}\chave{P(\abs{Z_n}>\epsilon_z \abs{b_n})+P(\abs{X_n}>\epsilon_x\abs{a_n})}<\infty.
	\end{align*}
	If $Z_n=O(b_n)$, then $\exists M>0,n_0\in\N:\forall n>n_0:\abs{Z_n/b_n}\leq M$. For $\epsilon=M\epsilon_x$, it holds that
	\begin{equation*}
		\sum_{n\in\N}P(\abs{X_nZ_n}>\epsilon \abs{a_nb_n})\leq n_0 +\sum_{n>n_0}P(\abs{X_n}>\epsilon_x \abs{a_n})<\infty.
	\end{equation*}
	
	(ii) The results follows using the same arguments as in (i) except that $\epsilon_z>0$ and $M>0$ can now be arbitrary.
	
	(iii) Suppose that $Z_n=\Oac(b_n)$. Then $\exists \epsilon_z>0$ such that $\sum_{n\in\N}\{P(\abs{X_n/a_n}>\epsilon_x/2)+P(\abs{Z_n/b_n}>\epsilon_z/2)\}<\infty$. For $\epsilon=2\max(\epsilon_x,\epsilon_z)$, we have that
	\begin{align*}
		\sum_{n\in\N}P\parent[\bigg]{\abs[\bigg]{\frac{X_n+Z_n}{a_n+b_n}}>\epsilon }&\leq 	\sum_{n\in\N}\chave[\bigg]{P\parent[\bigg]{\abs[\bigg]{\frac{X_n}{a_n+b_n}}>\frac{\epsilon }{2}}+P\parent[\bigg]{\abs[\bigg]{\frac{Z_n}{a_n+b_n}}>\frac{\epsilon }{2}}}\\
		&=\sum_{n\in\N}\chave[\bigg]{P\parent[\bigg]{\abs[\bigg]{\frac{X_n}{a_n}} \abs[\bigg]{\frac{1}{1+b_n/a_n}}>\frac{\epsilon }{2}}+P\parent[\bigg]{\abs[\bigg]{\frac{Z_n}{b_n}} \abs[\bigg]{\frac{1}{1+a_n/b_n}}>\frac{\epsilon }{2}}}\\
		&\leq \sum_{n\in\N}\chave[\bigg]{P\parent[\bigg]{\abs[\bigg]{\frac{X_n}{a_n}}>\frac{\epsilon }{2}}+P\parent[\bigg]{\abs[\bigg]{\frac{Z_n}{b_n}} >\frac{\epsilon }{2}}}<\infty.
	\end{align*}
	
	If $Z_n=O(b_n)$, then $\exists M>0,n_0\in\N:\forall n>n_0:\abs{Z_n/b_n}\leq M$. So, if we set $\epsilon=\epsilon_x+M$, then
	\begin{align*}
		\sum_{n\in\N}P\parent[\bigg]{\abs[\bigg]{\frac{X_n+Z_n}{a_n+b_n}}>\epsilon }&\leq n_0+\sum_{n>n_0}P\parent[\bigg]{\abs[\bigg]{\frac{X_n}{a_n}} \abs[\bigg]{\frac{1}{1+b_n/a_n}}+M\abs[\bigg]{\frac{1}{1+a_n/b_n}} >\epsilon}\\
		&\leq n_0+\sum_{n>n_0}P\parent[\bigg]{\abs[\bigg]{\frac{X_n}{a_n}}  >\epsilon-M}<\infty.
	\end{align*}
	
	(iv) The result follows by using the same arguments as in the proof of (iii) for $\epsilon_z,M>0$ arbitrary.
	
	(v) By hypothesis, for some $n_0,M>0$,
	\begin{equation*}
		\sum_{n\in\N}P(\abs{X_n/b_n}>\epsilon)\leq n_0+\sum_{n>n_0}P(\abs{X_n}>\epsilon\abs{a_n}/M)<\infty,
	\end{equation*}
	if we take $\epsilon=\epsilon_xM$.
	
	(vi) Let $\epsilon'>0$ be arbitrary.  By hypothesis, there is $n_0$ such that
	\begin{equation*}
		\sum_{n\in\N}P(\abs{X_n/b_n}>\epsilon)\leq n_0+\sum_{n>n_0}P(\abs{X_n}>\epsilon\abs{a_n}/\epsilon')<\infty,
	\end{equation*}
	if $\epsilon=\epsilon_x\epsilon'$.
	
	(vii) The result follows if the equivalence $X_n=\Oac(a_n)\iff X_n/a_n=\Oac(1)$ is satisfied. But it is trivially true from the definition.
\qed

\begin{prop}\label{propdan2}
	Let $a_n=o(1)$. For any $M\in\R$, it holds that
	\begin{equation*}
		X_n-M=\Oac(a_n)\implies 1/X_n=1/M+\Oac(a_n).
	\end{equation*}
\end{prop}

\noindent\textbf{Proof of Proposition \ref{propdan2}}
	\begin{align}\notag
		P\parent[\bigg]{\abs[\bigg]{\frac{1}{X_n}-\frac{1}{M}}>\epsilon\abs{a_n} }&\leq P(\abs{X_n-M}>\epsilon  \abs{a_nMX_n},\abs{X_n/M}\leq 2)+P(\abs{X_n/M}>2)\\\label{e9}
		&\leq  P(\abs{X_n-M}>2\epsilon M^2\abs{a_n})+P(\abs{X_n/M}>2)
	\end{align}
	
	By hypothesis, there exist $\epsilon'>0$ and $n_0\in\N$ such that 
	\begin{align*}
		\infty>\sum_{n\in\N}P(\abs{X_n-M}>\epsilon' \abs{a_n})\geq \sum_{n\in\N}P(\abs{X_n}>\epsilon' \abs{a_n}+\abs{M})\geq \sum_{n>n_0}P(\abs{X_n}>2\abs{M}).
	\end{align*}
	Hence, from \eqref{e9},
	\begin{align*}
		\sum_{n\in\N}	P\parent[\bigg]{\abs[\bigg]{\frac{1}{X_n}-\frac{1}{M}}>\epsilon\abs{a_n} }&\leq 2n_0+\sum_{n>n_0}\chave{P(\abs{X_n-M}>2\epsilon M^2\abs{a_n})\\
			&+P(\abs{X_n/M}>2)}<\infty
	\end{align*}
	for $\epsilon=\epsilon'/(2M^2)$.
\qed

\FloatBarrier
\bibliographystyle{apalike}
\bibliography{sup_biblio}